\let\hat=\widehat
\let\tilde=\widetilde
\let\landa=\lambda
\let\alfa=\alpha
\let\parc=\partial
\def\landa{\lambda}
\def\flecha{\rightarrow}
\def\cte.{\mathop{\rm cte.}\nolimits}
\def\L{\mathbb{L}}
\def\E{\mathbb{E}}
\def\R{\mathbb{R}}
\def\C{\mathbb{C}}
\def\D{\mathbb{D}}
\def\H{\mathbb{H}}
\def\S{\mathbb{S}}
\def\Nil{{\rm{Nil}_3}}
\def\Ek{\mathbb{E}^3 (\kappa,\tau)}
\def\Mk{\mathcal{M}^2(\kappa)}
\newcommand{\nil}{\mathrm{Nil}_3}
\newcommand{\sol}{\mathrm{Sol}_3}
\def\Sol{\rm{Sol}_3}
\def\psl{\rm{PSL}(2,\R)}
\def\hr{\H^2\times\R}
\newcommand\esc[2]{\langle{#1},{#2}\rangle}
 \newtheorem{defi}{Definition}[section]
 \newtheorem{teo}[defi]{Theorem}
 \newtheorem{cor}[defi]{Corollary}
 \newtheorem{rem}[defi]{Remark}
\newcommand{\beq}{\begin{equation}}
\newcommand{\eeq}{\end{equation}}
\newcommand{\bp}{\begin{proof}\mbox{ }\newline\mbox{ }\vspace{-.3cm}\\ \mbox{ }\hspace{.5cm} }
\title[Constant mean curvature surfaces in $3$-dimensional Thurston geometries]{Constant mean curvature surfaces in \\ $3$-dimensional Thurston geometries}
\author{Isabel Fern\'andez and Pablo Mira}
\begin{document}

\maketitle

\begin{abstract}  This is a survey on the global theory of constant mean curvature surfaces in
Riemannian homogeneous $3$-manifolds. These ambient $3$-manifolds
include the eight canonical Thurston $3$-dimensional geometries,
i.e. $\R^3$, $\H^3$, $\S^3$, $\H^2\times \R$, $\S^2\times \R$, the
Heisenberg space $\nil$, the universal cover of ${\rm PSL}_2 (\R)$
and the Lie group $\sol$. We will focus on the problems of
classifying compact CMC surfaces and entire CMC graphs in these
spaces. A collection of important open problems of the theory is
also presented. \end{abstract}

\keywords{ Constant mean curvature surfaces, homogeneous spaces,
Thurston geometries, harmonic maps, minimal surfaces, entire
graphs.}

\section{Introduction}

Constant mean curvature (CMC) surfaces appear as critical points of
a natural geometric variational problem: to minimize surface area
with or without a volume constraint (the unconstrained case
corresponds to zero mean curvature, i.e. to minimal surfaces). A
fundamental problem of this discipline is the geometric study and
classification of CMC surfaces under global hypotheses like
compactness, completeness, properness or embeddedness. The study of
this problem for CMC surfaces in the model spaces $\R^3$, $\S^3$ and
$\H^3$ has produced a very rich theory, in which geometric arguments
interact with complex analysis, harmonic maps, integrable systems,
maximum principles, elliptic PDEs, geometric measure theory and so
on.

One of the most remarkable achievements of this field in the last
decade has been the extension of this classical theory to the case
of CMC surfaces in simply connected homogeneous $3$-dimensional
ambient spaces. Apart from $\R^3$, $\S^3$ and $\H^3$, these spaces
are the remaining five Thurston $3$-dimensional geometries (i.e.
$\H^2\times \R$, $\S^2\times \R$, the Heisenberg group $\nil$, the
universal covering of ${\rm PSL}_2 (\R)$ and the Lie group $\sol$),
together with $3$-dimensional Berger spheres and some other Lie
groups with left-invariant metrics (see Section 2).

It must be said here that there is an important number of
contributions regarding CMC surfaces in general Riemannian
$3$-manifolds (not even homogeneous), many of which deal for
instance with isoperimetric questions or with geometric consequences
derived from the \emph{stability operator} associated to the second
variation of the surface. The achievement in the case of homogeneous
ambient 3-spaces has been the construction of a very rich global
theory of CMC surfaces, analogous to the case of $\R^3$, $\S^3$ and
$\H^3$, with an emphasis on the geometric classification (up to
ambient isometries) of properly immersed or properly embedded CMC
surfaces. The fact that the ambient space is homogeneous, i.e. it
has the same local geometry at all points, makes this problem
extremely natural.

Our aim here is to present a survey on some fundamental aspects of
the global theory of CMC surfaces in homogeneous $3$-manifolds. We
do not plan, however, to give a systematic account of all important
results of this already broad theory, but to discuss some specific
problems at the core of it. Hence, there will be many important
results omitted, and we apologize in advance for that.

In order to explain the problems we shall be dealing with, let us distinguish between compact and non-compact CMC surfaces in these spaces.

In the case of compact CMC surfaces, three fundamental problems are the Alexandrov problem (i.e. to classify compact embedded CMC surfaces), the Hopf problem (i.e. to classify CMC spheres), and the isoperimetric problem (recall that isoperimetric regions on a Riemannian $3$-manifold are bounded by compact embedded CMC surfaces, but the converse is not always true). By classical results, round spheres constitute the solution to each of these three problems in the case of CMC surfaces in $\R^3$. One of our main objectives will be to explain what is known (and what is not known) for these problems in the broader context of CMC surfaces in homogeneous $3$-manifolds.

In the case of non-compact CMC surfaces, one of the basic problems is to study the properly embedded CMC surfaces of finite topology. A classical result in that direction is given by \emph{Bernstein's theorem:} planes are the only entire minimal graphs in $\R^3$. As in all Thurston $3$-dimensional geometries there is a natural notion of entire graph, it is an important problem of the discipline to solve the \emph{Bernstein problem} for CMC graphs,     i.e. to classify all entire CMC graphs in these $3$-dimensional ambient spaces. This will be our other main objective.

The theory of CMC surfaces in Thurston $3$-dimensional geometries
started to develop as a consistent unified theory after some pioneer
works by Harold Rosenberg, jointly with William H. Meeks
\cite{MeRo1,MeRo2,Ros} for the case of minimal surfaces in product
spaces, and  jointly with Uwe Abresch \cite{AbRo1,AbRo2} for the
case of CMC surfaces in homogeneous spaces with a $4$-dimensional
isometry group.

On one hand, Meeks and Rosenberg established many results on
complete minimal surfaces in $M^2\times \R$, what has guided a large
number of subsequent works in the field. A recent major contribution
in this sense is the Collin-Rosenberg theorem \cite{CoRo} on the
existence of harmonic diffeomorphims from $\C$ onto the hyperbolic
plane $\H^2$, obtained by constructing an entire minimal graph of
parabolic conformal type in $\H^2\times \R$.

On the other hand, Abresch and Rosenberg discovered a holomorphic
quadratic differential for CMC surfaces in these homogeneous spaces
with $4$-dimensional isometry group (the $\Ek$ spaces), and solved
the Hopf problem for them. The general integrability theory of CMC
surfaces in the homogeneous $\Ek$ spaces was then established by B.
Daniel \cite{Dan1}. The discovery by the authors of a harmonic Gauss
map into $\H^2$ for $H=1/2$ surfaces in $\H^2\times \R$ turned into
a series of papers by Daniel, Fernández, Hauswirth, Mira, Rosenberg,
Spruck \cite{FeMi1,Dan2,FeMi2,HRS,DaHa} in which the Bernstein
problem for CMC graphs of \emph{critical mean curvature} (including
minimal graphs in Heisenberg space $\nil$, see Section 6) was
solved. Very recently, the Hopf and Alexandrov problems for CMC
surfaces have been solved by Daniel-Mira and Meeks \cite{DaMi,Mee}
in the remaining Thurston $3$-dimensional geometry: the Lie group
$\sol$, whose isometry group is only $3$-dimensional.

We have organized this exposition as follows. In Section 2 we will
introduce the $3$-dimensional homogeneous ambient spaces. In Section
3 we will present the basic integrability equations by Daniel for
CMC surfaces in the homogeneous spaces $\Ek$, together with the
holomorphic Abresch-Rosenberg differential, and with some basic
definitions on stability of CMC surfaces. In Section 4 we will
discuss the Hopf, Alexandrov and isoperimetric problems in the
homogeneous spaces $\Ek$. Section 5 will be devoted to solving the
Hopf and Alexandrov problems in the eighth Thurston geometry, i.e.
the Lie group $\sol$. In Section 6 we will present the solution to
the Bernstein problem for entire graphs of critical CMC in the
homogeneous $\Ek$ spaces. Finally, in Section 7 we shall expose the
Collin-Rosenberg theorem on parabolic entire minimal graphs in
$\H^2\times \R$, together with some developments on the theory of
complete minimal surfaces of finite total curvature in $\H^2\times
\R$. Most sections finish with a selection of important open
problems. See \cite{Mee,DHM} for more open problems in the theory.

A more detailed introduction to the global theory of CMC surfaces in
homogeneous $3$-spaces can be found in the Lecture Notes by Daniel,
Hauswirth and Mira \cite{DHM}.

The authors are grateful to H. Rosenberg, B. Daniel and J.A. Gálvez for useful observations about this manuscript.

%%%%%%%%
\section{Homogeneous $3$-spaces and Thurston geometries}

Homogeneous spaces are the natural generalization of space forms. By
definition, a manifold is said to be homogeneous if the isometry
group acts transitively on the manifold. Roughly speaking, the
manifold looks the same at all the points, even though, standing at
one point, the manifold can look different in different directions.
In the simply connected case, the classification of the
$3$-dimensional homogeneous spaces is well-known. It turns out that
any simply connected homogeneous $3$-space must have isometry group
of dimension 6, 4 or 3. The complete list of these spaces is the
following (see subsections below for more details):
\begin{itemize}
\item The spaces with $6$-dimensional isometry group are  the space forms:
the Euclidean space $\R^3$,  the hyperbolic space $\H^3(\kappa)$,
and the standard sphere $\S^3(\kappa)$. For simplicity we will
assume that $\kappa=\pm 1$ and write $\H^3=\H^3(-1)$ and
$\S^3=\S^3(1)$.
\item  The spaces with $4$-dimensional isometry group are fibrations over the $2$-dimensional space
forms. They are the product spaces $\H^2\times\R$ and
$\S^2\times\R$, the Berger spheres, the Heisenberg space $\Nil$ and
the universal covering of the Lie group $\psl$.
\item  The spaces with $3$-dimensional isometry group are a certain class of Lie groups; among them we
specially quote the space $\Sol$.
\end{itemize}

These spaces are closely related with Thurston's Geometrization
Conjecture. This recently proved conjecture states that any compact
orientable $3$-manifold can be cut by disjoint embedded $2$-spheres
or tori into pieces, each one of them, after gluing $2$-balls or
solid tori along its boundary components, admits a geometric
structure. A $3$-manifold without boundary is said to admit a
geometric structure if it can be endowed with a complete locally
homogeneous metric. In this case, by considering its universal
covering we obtain a complete simply-connected locally homogeneous
space and hence, by a result of Singer, homogeneous. Thus, a
$3$-manifold admitting a geometric structure can be realized as the
quotient of a homogeneous simply connected $3$-space under the
action of a subgroup of a Lie group acting transitively by
isometries. The list of the maximal geometric structures that give
compact quotients consists of eight of the previously described
spaces: the three space forms, the two product spaces, $\Nil$, the
universal covering of ${\psl}$ and $\Sol$ (Berger spheres must be
excluded from this list because they are not maximal, their isometry
group are contained in the one of the standard sphere $\S^3$). We
refer to \cite{scott,bon} for more details.

\subsection{Homogeneous spaces with $4$-dimensional isometry group}
Denote by $\Mk$ the $2$-dimensional space form of constant curvature
$\kappa$ (for example, $\Mk=\R^2,\H^2,\S^2$ for $\kappa=0,-1,1$
respectively). As commented above, any simply connected homogeneous
$3$-space with $4$-dimensional isometry group admits a fibration
over $\Mk$, for some $\kappa\in\R$.  Moreover, these spaces can be
parameterized in terms of the base curvature $\kappa$ and the bundle
curvature $\tau$, that satisfy $\kappa - 4\tau^2\neq 0$. We will use
the notation $\Ek$ for these homogeneous spaces.

\begin{enumerate}
\item When $\tau=0$, we have the product spaces $\Mk\times\R$, i.e. up to scaling,
the spaces $\S^2\times\R$ when $\kappa>0$, and $\H^2\times\R$ when
$\kappa<0$.

\item When $\tau\neq 0$ and $\kappa>0$, the corresponding spaces are the Berger spheres, a family of $2$-parameter ($1$-parameter after a homothetical change of coordinates) metrics on the sphere, obtained by deforming the standard metric in such a way that the Hopf fibration is still a Riemannian fibration. They can also be seen as the Lie group $\rm{SU}(2)$ endowed with a $1$-parameter family of left-invariant metrics.

\item When $\tau \neq 0$ and $\kappa =0$, $\Ek$ is the Heisenberg group $\Nil$, the nilpotent Lie group
$$\left\{\begin{pmatrix}
1 & a & b\\
0 & 1 & c\\
0 & 0 & 1\end{pmatrix}\;;\; a,b,c\in\R\right\} ,$$ endowed with a
$1$-parameter family of left-invariant metrics, all of them
isometrically equivalent after a homothetical change of coordinates.

\item When $\tau\neq 0$ and $\kappa<0$, we obtain the universal covering of the Lie group $\psl$, endowed with a $2$-parameter (again $1$-parameter after homotheties) family of left-invariant metrics.

\end{enumerate}

There exists a common setting for all these spaces. Indeed, label
$\D(\rho)=\{(x_1,x_2)\in\R^2\,;\,x_1^2+x_2^2<\rho^2\}$. Then, if
$\kappa=0$ (resp. $\kappa<0$), the space $\Ek$ can be viewed as
$\R^3$ (resp. $\D\left(2/\sqrt{-\kappa}\right)\times\R$) endowed
with the metric
\begin{equation}\label{eq:metric}
ds^2 = \lambda^2 (dx_1^2+ dx_2^2)+\big(\tau\lambda (x_2dx_1 -
x_1dx_2) + dx_3\big)^2, \hspace{0.5cm}
\lambda=\frac{1}{1+\frac{\kappa}{4}(x_1^2+x_2^2)}.
\end{equation} Also, for $\kappa>0$, $(\R^3,ds^2)$ corresponds to the universal cover of $\Ek$ minus one
fiber. In all cases, up to a homothetical change of coordinates we
can suppose without loss of generality that $\kappa-4\tau^2=\pm 1$.

The corresponding Riemannian fibration $\pi:\Ek\to\Mk$ is given here
by the projection on the first two coordinates. The unitary vector
field
$$ \xi=\dfrac{\partial}{\partial x_3}$$
is a Killing field tangent to the fibers of $\pi$, and will be
referred to as the {\em vertical field} of the space $\Ek$. It
satisfies the equation $$\hat\nabla_X \xi= \tau X\times \xi$$ for
all vector fields $X$ in $\Ek$. Here $\hat\nabla$ is the Levi-Civita
connection, $\times$ the cross product and $\tau$ the bundle
curvature (this is basically the definition of $\tau$).

A remarkable difference between the spaces $\Ek$ is that their
isometry group has four connected components in the case $\tau=0$,
and only two when $\tau\neq 0$. This follows from the fact that any
isometry in the product spaces can either preserve or reverse the
orientation of the base and the fibers independently, while in the
case $\tau\neq 0$ it can only either preserve or reverse both
orientations. In particular, reflections only exist in product
spaces.

Also, when $\tau\neq 0$ the spaces $\Ek$ are Lie groups, and if we
set $\sigma:=\frac{\kappa}{2\tau}$, an orthonormal frame of
left-invariant vector fields (called the {\em canonical frame}) is
given by
$$\begin{array}{l}
E_1= \lambda^{-1} \left(  \cos(\sigma x_3)\dfrac{\partial}{\partial x_1} + \sin(\sigma x_3)\dfrac{\partial}{\partial x_2}  \right) + \tau(x_1 \sin(\sigma x_3) -x_2 \cos(\sigma x_3)   )\dfrac{\partial}{\partial x_3},\\
E_2= \lambda^{-1} \left( - \sin(\sigma x_3)\dfrac{\partial}{\partial x_1} + \cos(\sigma x_3)\dfrac{\partial}{\partial x_2}  \right) + \tau(x_1 \cos(\sigma x_3) + x_2 \sin(\sigma x_3)   )\dfrac{\partial}{\partial x_3},\\
E_3=\xi=\dfrac{\partial}{\partial x_3}.
\end{array}$$

\subsection{Homogeneous spaces with $3$-dimensional isometry group}

Of all homogeneous spaces with $3$-dimensional isometry group,
$\Sol$ is specially important, since it is the only Thurston
geometry among them. We will now describe some aspects of this
space.

A useful representation of $\Sol$ is the space $\R^3$ with the
metric
$$ds^2=e^{2x_3} dx_1^2 + e^{-2x_3} dx_2^2 + dx_3^2,$$
that is left-invariant for the structure of Lie group given by
$$ (x_1,x_2,x_3)\cdot(y_1,y_2,y_3) = ( x_1+e^{-x_3}y_1,x_2+e^{x_3}y_2,x_3+y_3  ).$$
The following vector fields form an orthonormal left-invariant frame
$$E_1=e^{-x_3} \dfrac{\partial}{\partial x_1},\quad E_2= e^{x_3} \dfrac{\partial}{\partial x_2} ,\quad  E_3=\dfrac{\partial}{\partial x_3}. $$
The isometries in $\Sol$ are generated by the three $1$-parameter
groups of translations
$$\begin{array}{c}
(x_1,x_2,x_3)\mapsto (x_1+c,x_2,x_3), \qquad
(x_1,x_2,x_3)\mapsto (x_1,x_2+c,x_3), \\[.2cm]
(x_1,x_2,x_3)\mapsto  (e^{-c} x_1, e^{c} x_2,x_3+c),
\end{array}$$
and by the orientation reversing isometries fixing the origin
$$(x_1,x_2,x_3)\mapsto  (-x_1,x_2,x_3), \qquad (x_1,x_2,x_3)\mapsto  (x_2,-x_1,-x_3).$$
A remarkable fact is the existence of two canonical foliations,
namely
$$\mathcal{F}_1=\{x_1=\mbox{constant}\}, \qquad \quad \mathcal{F}_2=\{x_2=\mbox{constant}\},$$
whose leaves are totally geodesic surfaces isometric to the
hyperbolic plane $\H^2$. Reflections across any of these leaves are
orientation reversing isometries of $\sol$.

%%%%%%%%%%%%%%%

\section{CMC surfaces: basic equations}

In this section we present three important tools for our study. One
is the set of integrability equations of CMC surfaces in $\Ek$ by
Daniel \cite{Dan1}. Another one the \emph{Abresch-Rosenberg
differential}, a holomorphic quadratic differential geometrically
defined on any CMC surface in $\Ek$. The third one is a local
isometric correspondence for CMC surfaces in $\Ek$ via which one can
pass from one homogeneous space into another when studying CMC
surfaces \cite{Dan1}. Some notions about the stability operator of
CMC surfaces are also given.

%%%%%%%%%%%%%%%%%%

\subsection{Integrability equations in $\Ek$} It is well known that
the Gauss-Codazzi equations are the integrability conditions of
surface theory in $\R^3$, $\S^3$ and $\H^3$. In other homogeneous
spaces, the situation is more complicated.

Let $\psi:\Sigma\to\Ek$ be an isometric immersion with unit normal
map $\eta$, and consider on $\Sigma$ the conformal structure given
by its induced metric via $\psi$. Associated to a conformal
parameter $z=s+it$ on $\Sigma$, we will consider the usual operators
$\parc_z = (\parc_s -i\parc_t )/2$ and $\parc_{\bar{z}} =(\parc_s +i
\parc_t)/2$. Also denote by $\xi$ the vertical Killing field of
$\Ek$.

\begin{defi}\label{fundada} We call the \emph{fundamental
data} of $\psi$ the $5$-tuple $(\lambda |dz|^2,u,H,p \, dz^2 ,A
\,dz)$ where $H$ is the mean curvature and
$$\lambda=2\esc{\psi_z}{\psi_{\bar z}}, \hspace{1cm}
u=\esc{N}{\xi}, \hspace{1cm} p=-\esc{\psi_z}{N_z}, \hspace{1cm}
A=\esc{\xi}{\psi_z}.$$
\end{defi}
The function $u$ is commonly called the \emph{angle function} of the
surface.

Once here, a set of necessary and sufficient conditions for the
integrability of CMC surfaces in $\Ek$ can be written in terms of
these fundamental data. This is a result by B. Daniel \cite{Dan1},
although the formulation that we expose here (i.e. in terms of a
conformal parameter on the surface) comes from \cite{FeMi2}.

\begin{teo}[\cite{Dan1,FeMi2}]\label{th:formulas}
The fundamental data of an immersed surface $\psi:\Sigma\flecha \Ek$
satisfy the following integrability conditions:

\begin{equation}\label{lasces}
\left\{\def\arraystretch{1.3} \begin{array}{lccc} {\bf (C.1)} &
p_{\bar{z}} & = & \displaystyle \frac{\landa}{2} (H_z + u A (\kappa
- 4\tau^2)). \\ {\bf (C.2)} & A_{\bar{z}} & = & \displaystyle
\frac{u \landa}{2} (H+i\tau) .\\ {\bf (C.3)} & u_{z}
& = & - (H-i\tau) A -\displaystyle \frac{2 p}{\landa} \bar{A}.\\
{\bf (C.4)} & \displaystyle \frac{4 |A|^2}{\landa} & = & 1 - u^2 .
\end{array}\right.
\end{equation}

Conversely, if $\Sigma$ is simply connected, these equations are
also sufficient for the exis\-tence of a surface $\psi
:\Sigma\flecha \Ek$ with fundamental data $(\landa |dz|^2,u,H,p\,
dz^2,A\, dz)$. This surface is unique up to ambient isometries
preserving the orientations of base and fiber of $\Ek$.
\end{teo}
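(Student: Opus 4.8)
The plan is to prove necessity by a direct moving-frame computation and sufficiency by a Bonnet-type integration argument.

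\textbf{Necessity.} Work with the complexified frame $\{\psi_z,\psi_{\bar z},N\}$ along $\Sigma$. In the conformal parameter one has $\esc{\psi_z}{\psi_z}=0$, $\esc{\psi_z}{\psi_{\bar z}}=\landa/2$, $\esc{\psi_z}{N}=0$, and I would first record the Gauss--Weingarten coefficients: the normal part of $\hat\nabla_{\psi_z}\psi_z$ is $pN$, the mixed derivative satisfies $\esc{\hat\nabla_{\psi_{\bar z}}\psi_z}{N}=\landa H/2$, and $N_z=-H\psi_z-\tfrac{2p}{\landa}\psi_{\bar z}$. Decomposing the vertical field as $\xi=\tfrac{2\bar A}{\landa}\psi_z+\tfrac{2A}{\landa}\psi_{\bar z}+uN$ and imposing $|\xi|^2=1$ yields $(\mathbf{C.4})$ at once, since the tangential part has squared norm $4|A|^2/\landa$.

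Equations $(\mathbf{C.2})$ and $(\mathbf{C.3})$ come from differentiating the defining identities $A=\esc{\xi}{\psi_z}$ and $u=\esc{\xi}{N}$ and feeding in the ambient structure equation $\hat\nabla_X\xi=\tau\,X\times\xi$. For instance, $A_{\bar z}=\esc{\hat\nabla_{\psi_{\bar z}}\xi}{\psi_z}+\esc{\xi}{\hat\nabla_{\psi_{\bar z}}\psi_z}$; the first term equals $\tau\,\esc{\psi_{\bar z}\times\xi}{\psi_z}=\tfrac{i\tau u\landa}{2}$ after using $\psi_z\times\psi_{\bar z}=\tfrac{i\landa}{2}N$, while the second contributes $\tfrac{u\landa}{2}H$, giving $(\mathbf{C.2})$. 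An analogous computation for $u_z$, together with the Weingarten formula above, produces $(\mathbf{C.3})$.

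The delicate equation is $(\mathbf{C.1})$, which is the Codazzi equation written in the complex frame. Starting from $p=\esc{\hat\nabla_{\psi_z}\psi_z}{N}$ and commuting covariant derivatives, $\esc{\hat R(\psi_{\bar z},\psi_z)\psi_z}{N}=\esc{\hat\nabla_{\psi_{\bar z}}\hat\nabla_{\psi_z}\psi_z}{N}-\esc{\hat\nabla_{\psi_z}\hat\nabla_{\psi_{\bar z}}\psi_z}{N}$, the left side reduces to $p_{\bar z}-\tfrac{\landa}{2}H_z$ after the quadratic shape-operator and lower-order connection terms cancel. The right side is computed from the explicit curvature tensor of $\Ek$, whose only non-space-form contribution is proportional to $(\kappa-4\tau^2)$; substituting $\esc{\psi_z}{\xi}=A$ and $\esc{N}{\xi}=u$ gives $\esc{\hat R(\psi_{\bar z},\psi_z)\psi_z}{N}=(\kappa-4\tau^2)\tfrac{\landa uA}{2}$, exactly the extra term in $(\mathbf{C.1})$. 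Carefully tracking these connection and curvature terms is the main obstacle in this direction.

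\textbf{Sufficiency and uniqueness.} The subtle point is that no Gauss equation appears explicitly among $(\mathbf{C.1})$--$(\mathbf{C.4})$; the plan is to show it is forced by the system. Differentiating the constraint $(\mathbf{C.4})$ in $z$ and substituting $\bar A_z=\overline{A_{\bar z}}$ from $(\mathbf{C.2})$ and $u_z$ from $(\mathbf{C.3})$ lets me solve for $A_z$ on the open set where $A\neq0$; imposing the consistency condition $(A_z)_{\bar z}=(A_{\bar z})_z$ then yields precisely the Gauss equation relating $\landa_{z\bar z}$ to $H$, $|p|^2$, $\tau$, $\kappa$ and $u$. With Gauss, Codazzi $(\mathbf{C.1})$, and the compatibility equations $(\mathbf{C.2})$, $(\mathbf{C.3})$ for the pair $(T,u)$ all verified, the real fundamental theorem of isometric immersions into $\Ek$ (Daniel \cite{Dan1}) produces an immersion $\psi:\Sigma\flecha\Ek$ realizing the prescribed fundamental data on the simply connected $\Sigma$, the locus $A\equiv0$ (i.e. $u^2\equiv1$) being handled by continuity or as a separate horizontal-slice case. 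Uniqueness up to an orientation-preserving ambient isometry is the rigidity half of this theorem: two such immersions have adapted frames differing by a constant element of the transitively acting group $\mathrm{Iso}^+(\Ek)$. I expect the derivation of the Gauss equation from the first-order system to be the main obstacle here.
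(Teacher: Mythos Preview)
The paper is a survey and gives no proof of this theorem: it states the result, cites \cite{Dan1,FeMi2}, and only remarks afterwards that ``{\bf (C.1)} is the Codazzi equation, while the Gauss equation does not appear (it is deduced from the rest).'' Your proposal is correct and is precisely the argument behind those references: necessity via the moving-frame computation using $\hat\nabla_X\xi=\tau\,X\times\xi$ for {\bf (C.2)}--{\bf (C.4)} and the ambient curvature tensor of $\Ek$ for the Codazzi identity {\bf (C.1)}; sufficiency by recovering the Gauss equation from the first-order system (via the compatibility $(A_z)_{\bar z}=(A_{\bar z})_z$ after differentiating {\bf (C.4)}) and then invoking Daniel's Bonnet-type theorem in \cite{Dan1}. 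One cosmetic slip: in your Codazzi paragraph you label the second-derivative side as the ``left side'' and the curvature side as the ``right side,'' the reverse of how you wrote the identity, but the computation $p_{\bar z}-\tfrac{\lambda}{2}H_z=\langle\hat R(\psi_{\bar z},\psi_z)\psi_z,N\rangle=(\kappa-4\tau^2)\tfrac{\lambda uA}{2}$ is right.
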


We see then that, in the spaces $\Ek$, more equations apart from the
Gauss-Codazzi ones are needed, due to the loss of symmetries. As a
matter of fact, {\bf (C.1)} is the Codazzi equation, while the Gauss
equation does not appear (it is deduced from the rest). These new
equations evidence the special character of the vertical direction
in the $\Ek$ spaces.

\begin{defi}
The \emph{Abresch-Rosenberg} differential of the immersion is
defined as the quadratic differential on $\Sigma$ given by
$$Q dz^2 = \left( 2 (H+i\tau) p - (\kappa -4 \tau^2) A^2 \right) dz^2.$$
\end{defi}
It is then easy to see by means of ${\bf (C.2)}$ that the Codazzi
equation {\bf (C.1)} can be rephrased in terms of $Q$ as
\begin{equation}\label{cod}
Q_{\bar{z}}= \landa H_z + (\kappa - 4\tau^2 ) \frac{H_{\bar{z}}
A^2}{(H+i\tau)^2}.
\end{equation}
Consequently, one has the following theorem, which generalized the
classical fact that the Hopf differential is holomorphic for CMC
surfaces in $\R^3$, $\S^3$ and $\H^3$.
 \begin{teo}[\cite{AbRo1,AbRo2}]\label{abro}
 $Q dz^2$ is a holomorphic quadratic differential on any CMC surface in $\Ek$.
  \end{teo}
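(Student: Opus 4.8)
The plan is to reduce the statement to the single analytic condition $Q_{\bar z}=0$ and then read it off from the Codazzi equation already rewritten in \eqref{cod}. Recall first that a quadratic differential expressed as $Q\,dz^2$ in a conformal parameter $z$ is \emph{holomorphic} exactly when its coefficient $Q$ is a holomorphic function, i.e. $\partial_{\bar z}Q=0$; under a holomorphic change of conformal parameter $w=w(z)$ the symbol $dz^2$ picks up the factor $(dz/dw)^2$, so the vanishing of $Q_{\bar z}$ is independent of the chart and, once verified in each local parameter, defines a global holomorphic object on $\Sigma$. Thus everything comes down to showing $Q_{\bar z}=0$.

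The main step is then immediate from \eqref{cod}. That identity expresses $Q_{\bar z}$ purely in terms of the derivatives $H_z$ and $H_{\bar z}$ of the mean curvature, the remaining ambient parameters $\kappa$ and $\tau$ being constants. This is precisely where the constancy of $H$ is used: for a CMC immersion one has $H_z=H_{\bar z}=0$, so both summands on the right-hand side of \eqref{cod} vanish identically, giving $Q_{\bar z}=0$. Together with the chart-independence noted above, this shows that $Q\,dz^2$ is a holomorphic quadratic differential on $\Sigma$, and the proof is complete.

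I expect the genuine work to lie not in the CMC specialization but in establishing \eqref{cod} itself, which is already recorded in the excerpt. Differentiating $Q=2(H+i\tau)p-(\kappa-4\tau^2)A^2$ in $\bar z$ produces the terms $2(H+i\tau)p_{\bar z}$ and $-2(\kappa-4\tau^2)AA_{\bar z}$; the crux is that, after inserting the Codazzi equation \textbf{(C.1)} and the structure equation \textbf{(C.2)} from \eqref{lasces}, the two contributions proportional to $uA(\kappa-4\tau^2)$ cancel exactly, leaving only the mean-curvature-derivative terms. A minor point worth recording is the factor $(H+i\tau)^{-2}$ appearing in \eqref{cod}: it is harmless under the CMC hypothesis because its numerator $H_{\bar z}$ vanishes, and in the single degenerate case $H+i\tau=0$ (a minimal surface in a product space, where $\tau=0$) one sees directly from \textbf{(C.2)} that $A_{\bar z}=0$, so again $Q_{\bar z}=0$.
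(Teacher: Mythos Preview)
Your proof is correct and follows exactly the paper's own route: the theorem is stated immediately after \eqref{cod} with the word ``Consequently'', so the paper's argument is precisely to observe that the right-hand side of \eqref{cod} vanishes when $H$ is constant. Your additional remarks on chart-independence, on how \eqref{cod} arises from \textbf{(C.1)} and \textbf{(C.2)}, and on the degenerate case $H+i\tau=0$ are welcome elaborations that the paper leaves implicit.
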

This is a crucial result of the theory, since it allows the use of
holomorphic functions in the geometric classification of CMC
surfaces in $\Ek$ (see Section 4 and Section 6, for instance).

%In contrast with the classical case, the holomorphicity of $Q$ does
%not imply the constancy of the mean curvature of the surface in some
%spaces $\Ek$ (see \cite{BeTa,FeMi2}).

An important tool in the description of CMC surfaces in $\R^3$,
$\S^3$ and $\H^3$ is the classical Lawson correspondence. It
establishes an isometric one-to-one local correspondence between CMC
surfaces in different space forms that allows to pass, for instance,
from minimal surfaces in $\R^3$ to $H=1$ surfaces in $\H^3$.

The Lawson correspondence was generalized by B. Daniel to the
context of homogeneous spaces. Indeed, Daniel discovered in
\cite{Dan1} an isometric local correspondence for CMC surfaces in
all the homogeneous spaces $\Ek$, which can be described as follows
in terms of the \emph{fundamental data} defined above.

\begin{teo}[Sister correspondence, \cite{Dan1}]\label{th:sister}
Let $(\landa |dz|^2,u,H_1,p_1\, dz^2 ,A_1\, dz)$ be the fundamental
data of a simply connected $H_1$-CMC surface in
$\E(\kappa_1,\tau_1)$, and consider $\kappa_2,\tau_2,H_2 \in \R$ so
that
$$
\kappa_2 -4\tau_2^2 = \kappa_1 -4\tau_1^2, \qquad H_2^2 + \tau_2^2 =
H_1^2 +\tau_1^2.
$$
Then if we set $\theta \in \R$ given by $H_2 - i\tau_2 = e^{i\theta}
(H_1-i\tau_1)$, the fundamental data given by
\begin{equation}\label{fundadan}
(\landa |dz|^2, u, H_2, p_2 \, dz^2= e^{-i\theta} p_1 \, dz^2, A_2
\, dz =e^{-i\theta} A_1\, dz)
\end{equation}
give rise to a (simply connected) $H_2$-CMC surface in
$\E^3(\kappa_2,\tau_2)$, which is locally isometric to the original
one.
\end{teo}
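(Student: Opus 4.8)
The strategy is to reduce everything to the characterization of Theorem~\ref{th:formulas}: since the first fundamental form $\landa|dz|^2$ and the angle function $u$ are declared to be the \emph{same} in both $5$-tuples, the only thing to prove is that the proposed data $(\landa|dz|^2,u,H_2,p_2\,dz^2,A_2\,dz)$ again satisfy the integrability system \textbf{(C.1)}--\textbf{(C.4)}, now for the triple $(\kappa_2,\tau_2,H_2)$. Once this is checked, the converse (existence) part of Theorem~\ref{th:formulas}, valid because $\Sigma$ is simply connected, produces an immersion $\psi_2:\Sigma\flecha\E(\kappa_2,\tau_2)$ realizing these data and unique up to orientation-preserving ambient isometries; and since its induced metric is $\landa|dz|^2$, identical to that of the original surface, the correspondence $\psi_1\mapsto\psi_2$ is automatically a local isometry. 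So the whole proof collapses to a verification, and the real content lies in how the phase factor $e^{-i\theta}$ interacts with the structural constants.

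First I would record the elementary consequences of the hypotheses. The relation $H_2^2+\tau_2^2=H_1^2+\tau_1^2$ says precisely that $|H_2-i\tau_2|=|H_1-i\tau_1|$, so a real $\theta$ with $H_2-i\tau_2=e^{i\theta}(H_1-i\tau_1)$ exists; conjugating gives $H_2+i\tau_2=e^{-i\theta}(H_1+i\tau_1)$. Together with $\kappa_2-4\tau_2^2=\kappa_1-4\tau_1^2$ and the definitions $A_2=e^{-i\theta}A_1$, $p_2=e^{-i\theta}p_1$, these are the only identities needed.

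Then I would check the four equations in turn, using crucially that both $H_1$ and $H_2$ are constant. Equation \textbf{(C.4)} is immediate since $|A_2|=|A_1|$ and $u$ is unchanged. For \textbf{(C.2)}, $A_{2,\bar z}=e^{-i\theta}A_{1,\bar z}=e^{-i\theta}\tfrac{u\landa}{2}(H_1+i\tau_1)=\tfrac{u\landa}{2}(H_2+i\tau_2)$ by the conjugate relation. For \textbf{(C.3)}, the factor $e^{-i\theta}$ in the first term is absorbed by $(H_2-i\tau_2)e^{-i\theta}=H_1-i\tau_1$, while in the second term the $e^{-i\theta}$ carried by $p_2$ cancels the $e^{+i\theta}$ coming from $\overline{A_2}$, so the right-hand side reduces to the original expression for $u_z$, which is unchanged. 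Finally, since $H_2$ is constant the term $H_{2,z}$ in \textbf{(C.1)} drops out, leaving $p_{2,\bar z}=\tfrac{\landa}{2}\,uA_2(\kappa_2-4\tau_2^2)$, and multiplying the original \textbf{(C.1)} (whose $H_{1,z}$ term likewise vanishes) by $e^{-i\theta}$ yields exactly this, thanks to $\kappa_2-4\tau_2^2=\kappa_1-4\tau_1^2$.

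The step I expect to carry the real weight is \textbf{(C.1)}, and the point worth emphasizing is that the argument works \emph{only} because $H$ is constant: the reformulation \eqref{cod} of Codazzi shows that for a non-CMC surface $Q_{\bar z}$ carries an extra $\landa H_z$ term together with an $H_{\bar z}A^2$ contribution that would not transform homogeneously under multiplication by $e^{-i\theta}$, so the phase rotation of $p$ and $A$ would fail to preserve Codazzi. Thus the constancy of the mean curvature is precisely what makes the correspondence close up, and I would make sure the write-up isolates this as the crux rather than burying it among the phase bookkeeping.
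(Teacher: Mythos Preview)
Your argument is correct. The paper does not actually supply a proof of Theorem~\ref{th:sister}; it states the result and cites Daniel~\cite{Dan1}. Your approach --- checking directly that the phase-rotated data $(\landa|dz|^2,u,H_2,e^{-i\theta}p_1,e^{-i\theta}A_1)$ satisfy \textbf{(C.1)}--\textbf{(C.4)} for the parameters $(\kappa_2,\tau_2)$ and then invoking the converse part of Theorem~\ref{th:formulas} --- is exactly the verification that the conformal formulation of the integrability equations (as in~\cite{FeMi2}) is designed to make transparent, and your bookkeeping with the identities $H_2+i\tau_2=e^{-i\theta}(H_1+i\tau_1)$ and $\kappa_2-4\tau_2^2=\kappa_1-4\tau_1^2$ is clean and accurate. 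Your emphasis that the constancy of $H$ is what makes \textbf{(C.1)} transform correctly is the right structural point.

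One minor edge case you may want to mention: if $H_1=\tau_1=0$ then $H_1-i\tau_1=0$ and the relation defining $\theta$ is vacuous; but then $H_2=\tau_2=0$ as well and every $\theta$ satisfies the system, which simply recovers the associate family of minimal surfaces in $\Mk\times\R$ that the paper notes immediately after the theorem.
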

Two surfaces related by the above correspondence are called {\em
sister surfaces} with phase $\theta$. In particular, the
corresponding Abresch-Rosenberg differentials of sister surfaces are
related by $Q_2=e^{-2i\theta}Q_1$. As special cases of this
correspondence we obtain the associate family of minimal surfaces in
$\Mk\times\R$, and a correspondence between minimal surfaces in
$\Nil$ and CMC $\frac{1}{2}$ surfaces in $\H^2\times\R$.
Generically, and up to ambient isometries and dilations, the family
of sister surfaces for a given choice of $(H,\kappa,\tau)$ is a
continuous $1$-parameter family.

There is a natural notion of {\em graph} in these spaces. Since
$\Ek$ has a canonical fibration over $\Mk$ (see Section 2), we will
say that an immersed surface $\Sigma$ in $\Ek$ is a (local) graph if
the projection to the base is a (local) diffeomorphism. The
CMC-equation for the graph of a function $u=u(x,y)$ is the PDE (see
\cite{lee})
\begin{equation}\label{eq:graphs}
 \dfrac{2H}{\delta^2}=
\dfrac{\partial}{\partial x} \left( \dfrac{\alpha}{\omega} \right) +
\dfrac{\partial}{\partial y} \left(  \dfrac{\beta}{\omega} \right),
\end{equation}
where
$$\begin{array}{cc}
\delta= 1+\dfrac{\kappa}{4}(x^2+y^2), &
\omega=\sqrt{ 1+ \delta^2(x^2+y^2)},\\[.2cm]
\alpha = u_x + \tau \dfrac{y}{\delta}, & \beta = u_y -
\tau\dfrac{x}{\delta}.
\end{array}$$
For instance, a graph $u=u(x,y)$ in $\Nil \equiv \E^3(0,\tau)$ is
minimal if and only if it satisfies the elliptic PDE

 \begin{equation}\label{min}
(1+ \beta^2) u_{xx}  - 2 \alfa \beta \, u_{xy} + (1+\alfa^2)u_{yy}
=0,
 \end{equation}
where $\alfa:= u_x + y/2$ and $\beta := u_y - x/2$.

\subsection{Stability and index of CMC surfaces}

As it is well known, CMC surfaces in Riemannian $3$-manifolds appear
as the critical points for the area functional associated to
variations of the surface with compact support and constant enclosed
volume. Equivalently, an immersed surface $S$ has constant mean
curvature $H$ if and only if it is a critical point for the
functional ${\rm Area} - 2H\,{\rm Vol}$. The second variation
formula for this functional is given by
$$Q(f,f)=- \int_S f{\mathcal{L}}(f),$$
where ${\mathcal{L}}$ is the \emph{Jacobi operator} (or
\emph{stability operator}) of the surface:
$${\mathcal{L}}= \Delta + ||{\mathcal{B}}||^2 + \mbox{Ric}(\eta).$$
Here $\Delta$ is the Laplacian for the induced metric on the
surface, ${\mathcal{B}}$ is the second fundamental form, $\eta$ is
the unit normal vector field, and $\mbox{Ric}$ is the Ricci
curvature in the ambient manifold. As a particular case, the Jacobi
operator for CMC surfaces in the spaces $\Ek$ can be rewritten (see
\cite{Dan1}) as
%citar a benoit
$${\mathcal{L}}= \Delta - 2K + 4H^2 + 4\tau^2 + (\kappa-4\tau^2)(1+u^2),$$
being $K$ the Gaussian curvature of the surface and $u$ the angle
function (see Definition \ref{fundada}). A {\em Jacobi function} is
a function $f$ for which $\mathcal{L}(f)=0$.

A CMC surface $S$ is said to be {\em stable}  (resp. {\em weakly
stable}) if
% $-{\mathcal{L}}$ is non-negative.
$$Q(f,f)= - \int_S f{\mathcal{L}}(f) \geq 0$$
holds for any smooth function $f$ on $S$ with compact support (resp.
with compact support and $\int_S f=0$).  For instance, CMC graphs in
$\Ek$ are stable, and compact CMC surfaces bounding isoperimetric
regions are weakly stable (but not necessarily stable, as round
spheres in $\R^3$ show).

An important concept related to stability is the {\em index} of a
CMC surface. The index of a compact CMC surface is defined as the
number of negative eigenvalues of its Jacobi operator. Thus, stable
CMC surfaces (in particular CMC graphs) have index zero. Round
spheres in $\R^3$ have index one.

We refer to \cite{stable} for more details about stability of CMC
surfaces.

%%%%%%%%%%%%%%%

\section{Compact CMC surfaces in $\Ek$}

In this section we explain the most important results that are known
regarding the existence and uniqueness of compact CMC surfaces in
the homogeneous $3$-spaces $\Ek$. The fundamental examples are the
rotational CMC spheres, and we shall be interested in their
uniqueness among compact embedded CMC surfaces, and among immersed
CMC surfaces. These problems are called, respectively, the
Alexandrov and Hopf problems.

\subsection{Rotational compact CMC surfaces}

Although round spheres in the model spaces $\R^3,\S^3,\H^3$ are CMC
spheres, this does not hold for the rest of homogeneous spaces.
However, in all the spaces $\Ek$ there exist rotations with respect
to the vertical axis, and so there is a natural notion of
\emph{rotational surface}. It is hence natural to seek CMC spheres
(and CMC tori) in $\Ek$ among the class of rotational surfaces. This
can be done by ODE analysis, and the result of this can be
summarized as follows:

\begin{teo} (Structure of rotational CMC spheres in $\Ek$).
\begin{enumerate}
\item If $\kappa-4\tau^2>0$, then for every $H\in\R$ there exists a unique rotational CMC $H$ sphere (up to isometries) in
$\E^3(\kappa,\tau)$. These spheres are embedded if $\tau=0$, i.e. in
$\S^2\times \R$, and also for most Berger spheres. However, for some
Berger spheres with small bundle curvature $\tau$ (with respect to a
fixed $\kappa$) there is a certain region of variation of the
parameters $(H,\tau)$ where the spheres are non-embedded. This
region can be explicitly described, see \cite{Tor}.
\item If $\kappa-4\tau^2<0$, then
\begin{itemize}
\item if $H^2\leqslant-\frac{\kappa}4$, then there exists no rotational CMC $H$ sphere in $\E^3(\kappa,\tau)$,
\item if $H^2>-\frac{\kappa}4$, then there exists a unique rotational CMC $H$ sphere (up to isometries) in
$\E^3(\kappa,\tau)$. All these spheres are embedded.
\end{itemize}
\end{enumerate}
\end{teo}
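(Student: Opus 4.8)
The plan is to reduce the classification of rotational CMC spheres to the study of a single autonomous ODE for the profile curve, obtained by imposing rotational symmetry on the CMC condition. First I would set up the rotational ansatz: a rotational surface in $\Ek$ is generated by a curve in a vertical half-plane, and because the metric \eqref{eq:metric} is invariant under the rotations about the vertical axis, the profile curve can be parametrized by arclength $s$ with the radial coordinate $\rho(s)$ and vertical coordinate $h(s)$ as unknowns. Writing the mean curvature $H$ in these coordinates yields a first-order ODE system for the angle $\phi$ that the profile makes with the horizontal (together with $\rho' = \cos\phi$, $h'$ determined by the bundle term). The key structural feature is that the system admits a first integral: multiplying the angle equation by an appropriate integrating factor derived from $\lambda$ and $\rho$ produces a conserved quantity $\mathcal{E}(\rho,\phi)$ along solutions. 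This is the standard mechanism by which rotational CMC surfaces in warped-product or fibered geometries become integrable.

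Once the first integral is in hand, the analysis becomes a phase-plane discussion. A sphere corresponds to a profile curve that starts orthogonally on the axis ($\rho=0$) at some vertical height, travels out to a maximal radius where $\phi=\pi/2$, and returns to meet the axis orthogonally again, closing up smoothly. So I would (i) impose the regularity condition at $\rho=0$ to fix the value of the first integral, (ii) solve for $\rho$ as a function of $\phi$ along the resulting level set, and (iii) check that the curve closes up into an embedded or immersed sphere. The distinction between the cases $\kappa-4\tau^2>0$ and $\kappa-4\tau^2<0$ enters through the sign governing the behavior of the level sets: in the positive case the base $\Mk$ is compact-like (spherical) and the profile always returns to the axis, giving existence for every $H$; in the negative case the base is hyperbolic, so the radial coordinate lives in a disk $\D(2/\sqrt{-\kappa})$ and the profile can escape to the ideal boundary unless $H$ is large enough. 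The threshold $H^2 = -\kappa/4$ is exactly the critical mean curvature of the ambient space, and I expect the first-integral analysis to show that only for $H^2 > -\kappa/4$ does the turning-point condition $\phi=\pi/2$ occur at a finite radius, forcing the profile back to the axis.

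The uniqueness statement follows from the fact that, after fixing $H$ (and the ambient parameters $\kappa,\tau$), the regularity condition at the axis determines the first integral uniquely, and hence the level set and the profile curve are determined up to the choice of initial height, which is absorbed by a vertical translation (an ambient isometry). Embeddedness is then read off from the monotonicity of $\rho$ as a function of the arclength between consecutive axis hits: if $\rho$ increases monotonically to its maximum and then decreases monotonically, the sphere is embedded, whereas oscillation of $\rho$ (which can occur for small $\tau$ in the Berger case) produces self-intersections. For the case $\kappa-4\tau^2<0$ I expect the monotonicity to hold unconditionally, yielding embeddedness of all the spheres, consistent with the statement.

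The main obstacle I anticipate is twofold. First, in the case $\kappa-4\tau^2>0$ with $\tau\neq0$ (Berger spheres), verifying embeddedness requires a delicate analysis of when $\rho(s)$ is monotone, since the bundle curvature $\tau$ couples the rotation to the vertical displacement and can make the profile wind; pinning down the explicit region of $(H,\tau)$ where embeddedness fails is genuinely computational and is the content of the cited reference \cite{Tor}, so I would invoke that rather than reprove it. Second, the regularity (smooth closing-up) of the profile at the axis $\rho=0$ is a removable-singularity issue for the ODE: the coordinate system degenerates on the axis, so I must check that a solution meeting $\rho=0$ with $\phi=0$ extends to a smooth surface across the pole rather than developing a conical point. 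This is where the precise form of the first integral near $\rho=0$ must be examined carefully to confirm the right contact order, and it is the step I would treat with the most care.
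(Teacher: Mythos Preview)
Your proposal is correct and aligns with the paper's approach: the paper does not give a detailed proof but simply states that the result ``can be done by ODE analysis'' and then exhibits the explicit rotational spheres in $\S^2\times\R$ as an example, deferring the Berger embeddedness region to \cite{Tor}. Your outline of reducing to a profile-curve ODE with a first integral, followed by a phase-plane analysis distinguishing the signs of $\kappa-4\tau^2$ and the threshold $H^2=-\kappa/4$, is exactly the standard ODE analysis the paper is invoking.
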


Let us remark that all these CMC spheres can be constructed
explicitly. We shall call them \emph{canonical rotational CMC
spheres}. For example, the rotational CMC $H$ spheres in $\S^2\times
\R\subset \R^4$ are given by the formula

$$\psi (u,v)= (-\cos k(u), \sin k(u) \cos v, \sin
k(u) \sin v, h(u)),$$ where $-1\leq u\leq 1$, $H\in \R$ and $$k(u):=
2 \, {\rm arc tan} \left(\frac{2H}{\sqrt{1-u^2}}\right),
\hspace{1cm} h(u):= \frac{4H}{\sqrt{4H²+1}} \, {\rm arc sinh }
\left(\frac{u}{\sqrt{1-u^2+4H^2}}\right).$$ Besides these rotational
CMC spheres, there also exist rotational CMC tori in $\Ek$ when (and
only when) $\kappa -4\tau^2 >0$ (excluding minimal surfaces in
$\S^2\times \R$). For $\S^2\times \R$, they are all embedded (see
Pedrosa \cite{Ped}). For Berger spheres the situation is explained
by Torralbo and Urbano in \cite{Tor,ToUr}; one has for every $H$
rotational embedded CMC tori given by the Hopf lift of a circle in
$\S^2$, but there also exist some other non-flat rotational CMC
tori. The embeddedness problem for such tori is open in general, but
for the minimal case there are embedded rotational tori other than
Clifford tori. This contrasts with the case of embedded minimal tori
in $\S^3$.

A general study of CMC surfaces in $\H^2\times \R$ and $\S^2\times
\R$ invariant by a continuous $1$-parameter subgroup of ambient
isometries can be found in \cite{SaE,SaTo}.

\subsection{The Alexandrov problem in $\Ek$}

One of the fundamental theorems of CMC surface theory is the
so-called \emph{Alexandrov theorem}.

\begin{teo}[Alexandrov]
Any compact embedded CMC surface in $\R^3$, $\H^3$ or a hemisphere
of $\S^3$ is a round sphere.
\end{teo}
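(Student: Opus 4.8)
The plan is to prove all three cases simultaneously by the \emph{Alexandrov reflection method} (the method of moving planes), exploiting that each of the three ambient spaces $\R^3$, $\H^3$ and the open hemisphere of $\S^3$ carries an abundant supply of totally geodesic surfaces across which reflection is an ambient isometry: affine planes in $\R^3$, totally geodesic copies of $\H^2$ in $\H^3$, and equatorial $2$-spheres in $\S^3$. Since $S$ is compact and embedded it bounds a compact region $\Omega$, and I orient $S$ by the inner normal so that its constant mean curvature is computed consistently. Fixing a geodesic direction, I consider the one-parameter foliation $\{P_t\}$ by totally geodesic surfaces orthogonal to that geodesic, chosen so that for $t$ large $P_t$ misses $\bar\Omega$ entirely; in the spherical case the hemisphere hypothesis is exactly what guarantees such a disjoint starting leaf and rules out the antipodal degeneracies of the full sphere.

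Next I decrease $t$ until $P_t$ first meets $S$, and thereafter reflect the \emph{cap} of $S$ lying on the far side of $P_t$ across $P_t$, obtaining a second CMC surface $S_t^\ast$ with the same mean curvature (reflection being an isometry). For $t$ just below the first contact the reflected cap lies strictly inside $\Omega$; I then continue decreasing $t$ and let $t^\ast$ be the first parameter at which this reflected cap ceases to lie strictly on the near side. By compactness and embeddedness such a $t^\ast$ is reached, and at it one of two tangency configurations must occur: either $S_{t^\ast}^\ast$ touches $S$ at an interior point $p\notin P_{t^\ast}$, or the two surfaces are tangent at a point $p\in P_{t^\ast}$ where $S$ meets $P_{t^\ast}$ orthogonally. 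In both configurations $S$ and $S_{t^\ast}^\ast$ are CMC surfaces with equal $H$ that are tangent at $p$, with one lying locally on one side of the other.

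The decisive step is the \emph{tangency principle}. Writing both surfaces as graphs over their common tangent plane at $p$ and subtracting the two (identical) mean-curvature equations, the difference $w$ of the graph functions satisfies a second-order linear elliptic equation with no zeroth-order term; the strong interior maximum principle settles the first configuration, and the Hopf boundary-point lemma, applied along $P_{t^\ast}$ where both graphs meet the mirror orthogonally, settles the second. In either case $w\equiv 0$ near $p$, and a standard open-and-closed continuation argument upgrades this to the coincidence $S_{t^\ast}^\ast=S\cap\{\text{near side}\}$ globally. Hence $S$ is symmetric across $P_{t^\ast}$.

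Finally, running the argument for every geodesic direction produces, for each direction, a totally geodesic mirror across which $S$ is symmetric. Each such mirror must pass through the center of mass of $\Omega$, so all of them share a common point $c$; the reflections fixing $c$ generate the full isotropy group at $c$, forcing $S$ to be invariant under all ambient isometries fixing $c$. Therefore $S$ is a geodesic sphere about $c$, i.e. a round sphere. The main obstacle I expect is the clean deployment of the tangency principle in the two boundary-versus-interior cases, together with verifying that the reflected caps remain embedded and on the near side throughout the descent in the curved ambient spaces, where the geometry of the totally geodesic foliations must be controlled carefully.
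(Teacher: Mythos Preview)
Your proposal is correct and follows essentially the same approach as the paper: the Alexandrov reflection (moving planes) method, using totally geodesic hypersurfaces and the maximum principle to conclude symmetry in every direction. The paper gives only a brief sketch for $\R^3$, whereas you flesh out the interior/boundary tangency dichotomy and the role of the hemisphere hypothesis, but the underlying strategy is identical.
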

\begin{proof}
The proof relies on the so-called Alexandrov reflection principle,
which we sketch for $\R^3$ although it works with great generality.
Consider a plane $P$ disjoint from the compact embedded CMC surface
$\Sigma$, and start translating it in a parallel way towards
$\Sigma$. After it first touches $\Sigma$, we start reflecting the
piece of $\Sigma$ that has been left behind across this new
translated plane. In this way we will eventually reach a first
contact point with the unreflected part of $\Sigma$. By the maximum
principle for elliptic PDEs, this means that $\Sigma$ is symmetric
with respect to such a plane. As the starting plane was arbitrary,
the compact surface must be a round sphere.
\end{proof}

It must be emphasized that there exist embedded CMC tori in $\S^3$,
such as the product tori $\S^1 (r)\times \S^1 (\sqrt{1-r^2})\subset
\S^3$. Thus, the hemisphere hypothesis is necessary in the case of
$\S^3$.

Motivated by this result, the problem of
classifying all compact embedded CMC surfaces in a Riemannian $3$-manifold $\bar{M}^3$
will be called the \emph{Alexandrov
problem} in $\bar{M}^3$.

In the case of CMC surfaces in the product spaces $\H^2\times \R$
and $\S^2\times \R$, the Alexandrov technique can be applied for
\emph{horizontal} directions, and so the following result holds.

\begin{teo}[Hsiang-Hsiang]
Any compact embedded CMC surface in $\H^2\times \R$ or $\S_+^2\times
\R$ is a standard rotational CMC sphere.
\end{teo}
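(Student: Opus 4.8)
The plan is to adapt the Alexandrov reflection principle (sketched for the space forms above) to the product geometry, exploiting the fact that, since $\tau=0$, the isometry group of $\Mk\times\R$ has four connected components and therefore contains the reflections we need. The available totally geodesic ``mirrors'' are of two kinds: the horizontal slices $\Mk\times\{c\}$, and the vertical planes $\gamma\times\R$ over geodesics $\gamma\subset\Mk$. Reflection across either of these is an ambient isometry, so the reflected image of a compact embedded CMC $H$ surface $\Sigma$ is again a CMC $H$ surface; this is exactly the structural fact that makes the moving-plane scheme run, together with the interior and boundary maximum principles for the (elliptic) CMC equation \eqref{eq:graphs}.

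First I would run the scheme with the vertical geodesic planes to extract horizontal symmetry. Fix a geodesic $\sigma\subset\Mk$ and consider the geodesics orthogonal to $\sigma$: in the hyperbolic case these foliate $\H^2$ and are permuted by the $1$-parameter group of translations along $\sigma$, and the associated vertical planes foliate $\H^2\times\R$. Starting from a leaf disjoint from the compact surface $\Sigma$ and sweeping it towards $\Sigma$, I would begin reflecting the portion already swept; by compactness a first tangency with the unreflected part is reached, either at an interior point or at a boundary point where the reflected cap becomes tangent to $\Sigma$. The maximum principle then forces $\Sigma$ to be symmetric across that vertical plane. Letting the direction of $\sigma$ vary over all horizontal directions yields a vertical plane of symmetry in every direction; these planes must share a common vertical axis (two ultraparallel symmetry planes would compose to a fixed-point-free hyperbolic translation, contradicting compactness of $\Sigma$), and the reflections across them generate the full group of rotations about that axis. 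Hence $\Sigma$ is rotationally symmetric about a vertical fiber.

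Running the analogous argument with the horizontal slices $\Mk\times\{c\}$ -- now with the ambient vertical translations $x_3\mapsto x_3+c$ playing the role of the Euclidean translations -- produces a horizontal plane of symmetry and shows that $\Sigma$ is a vertical bigraph meeting each fiber in at most two points. Combining the two conclusions, $\Sigma$ is a compact, embedded, rotationally invariant CMC surface, topologically a sphere. By the uniqueness statement in the structure theorem for rotational CMC spheres in $\Ek$, it must coincide with the canonical rotational CMC $H$ sphere. Here the hypotheses are used in an essential way: in $\H^2\times\R$ one has $\kappa-4\tau^2<0$, so no rotational CMC tori exist to compete with the spheres, while the hemisphere restriction in $\S^2_+\times\R$ rules out the rotational tori that do occur in the full $\S^2\times\R$.

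The main obstacle I anticipate is the spherical case. Great circles orthogonal to a fixed one all meet at a pair of antipodal points rather than forming a genuine foliation, so the sweeping family develops conjugate points; controlling this is precisely what forces the restriction to a hemisphere $\S^2_+\times\R$, where geodesics do not refocus and the moving planes sweep out all of $\Sigma$ with a well-defined first tangency. A secondary technical point is the coherence of the symmetry planes obtained for different directions -- one must verify they share a common axis, so that they generate a genuine rotation group rather than a discrete or indiscrete group of screw motions. Once these two points are settled, the maximum principle and the classification of rotational CMC spheres close the argument.
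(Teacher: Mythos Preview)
Your proposal is correct and follows the same approach as the paper, which does not give a detailed proof but simply remarks (just before the statement) that ``the Alexandrov technique can be applied for \emph{horizontal} directions'', i.e.\ using the vertical geodesic planes $\gamma\times\R$ as mirrors. Your write-up is a considerably more detailed elaboration of that one-line justification---including the extra step with horizontal slices and the coherence-of-axes discussion you flag at the end---but the core idea is identical.
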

Again, the hemisphere hypothesis is necessary, since we know that
there are embedded CMC tori in $\S^2\times \R$.

As regards the homogeneous spaces $\Ek$ with $\tau\neq 0$, i.e.
Heisenberg space, Berger spheres and the universal covering of ${\rm
SL}_2 (\R)$, the Alexandrov problem is open. The main difficulty
there is that these spaces do not admit reflections, and hence the
reflection principle does not hold.

\subsection{The Hopf problem in $\Ek$}

Another fundamental result of CMC surface theory is the \emph{Hopf
theorem}:

\begin{teo}[Hopf]
Any immersed CMC sphere in $\R^3$, $\S^3$ or $\H^3$ is a round
sphere.
\end{teo}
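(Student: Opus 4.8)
The plan is to follow Hopf's original strategy, which isolates the two ingredients that make the sphere rigid: the holomorphicity of the Hopf differential on a CMC surface, and the topological obstruction to nonzero holomorphic quadratic differentials on the sphere. First I would introduce, on the immersed CMC sphere $\psi:\Sigma\to M^3(c)$ (with $M^3(c)$ one of $\R^3,\S^3,\H^3$), the \emph{Hopf differential} $Q\,dz^2$, namely the $(2,0)$-part of the second fundamental form with respect to a local conformal parameter $z$ on $\Sigma$. This is the classical predecessor of the Abresch--Rosenberg differential, and the same type of computation that yields Theorem \ref{abro} shows, via the Codazzi equation in a space form, that $Q\,dz^2$ is holomorphic precisely because $H$ is constant. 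Since $\Sigma$ is topologically $\S^2$, its induced conformal structure makes it biholomorphic to $\CP^1$, so $Q\,dz^2$ becomes a global holomorphic section of the square of the canonical bundle.

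The second step is the topological heart of the argument. The zeros of $Q\,dz^2$ are exactly the umbilic points of $\psi$, and since $Q$ is holomorphic and $\Sigma$ is compact, a nonzero $Q$ would vanish only at finitely many points, each to some positive order $k_i$. Away from these zeros the two principal directions define a line field on $\Sigma$ whose singularities sit at the zeros of $Q$; a zero of order $k$ produces a line-field singularity of index $-k/2$, so every singularity has \emph{strictly negative} index. The Poincar\'e--Hopf theorem for line fields then forces $\sum_i(-k_i/2)=\chi(\S^2)=2$, which is impossible since the left-hand side is negative. Hence $Q\equiv 0$. (Equivalently, one notes that the square of the canonical bundle of $\CP^1\cong\Sigma$ has degree $-4<0$ and therefore admits no nonzero holomorphic section, or invokes Riemann--Roch directly.)

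Finally, $Q\equiv 0$ means $\psi$ is \emph{totally umbilic}, so I would invoke the classical local classification of totally umbilic surfaces in space forms: they are open pieces of round spheres, totally geodesic planes, horospheres or equidistant surfaces. Among these only the round spheres are compact, and a compact immersed totally umbilic surface must therefore be a round sphere. This last step is elementary once $Q\equiv 0$ is known, and the whole difficulty is concentrated in the previous paragraph. The main obstacle I expect is making the index bookkeeping precise: correctly relating the order of a zero of the holomorphic quadratic differential to the index of the principal line field, and justifying the Poincar\'e--Hopf count for a line field (rather than a vector field) on $\S^2$. Everything else---holomorphicity from Codazzi, and the umbilic classification---is routine and parallels Theorem \ref{abro}.
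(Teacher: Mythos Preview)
Your proposal is correct and follows essentially the same approach as the paper: holomorphicity of the Hopf differential via Codazzi, vanishing of holomorphic quadratic differentials on the Riemann sphere, hence total umbilicity, hence a round sphere. The only difference is cosmetic---the paper simply asserts that any holomorphic quadratic differential on $\overline{\C}$ vanishes, while you supply the index/Poincar\'e--Hopf (equivalently, degree) justification for this fact and spell out the umbilic classification; both are the classical Hopf argument.
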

\begin{proof}
The Hopf differential (see Section 3) of any CMC surface in $\R^3$,
$\S^3$ or $\H^3$ is holomorphic, and vanishes at the umbilical
points of the surface. As any holomorphic quadratic differential
must vanish on the Riemann sphere, we conclude that immersed CMC
spheres are totally umbilical, and hence round spheres.
\end{proof}

The \emph{Hopf problem} in a Riemannian $3$-manifold
$\bar{M}^3$ will refer to the problem of classifying all immersed CMC spheres
in $\bar{M}^3$.

As was proved in Theorem \ref{abro}, CMC surfaces in the homogeneous
spaces $\Ek$ have an associated holomorphic quadratic differential:
the Abresch-Rosenberg differential $Q_{AR}$. This allows to solve
the Hopf problem in $\Ek$, along the lines suggested by Hopf's
classical theorem. We present here an alternative proof to the
original one by Abresch and Rosenberg \cite{AbRo1,AbRo2}, based on
Daniel's integrability equations, and on some ideas in
\cite{FeMi2,GMM} (see \cite{dCF,EsRo2,DHM}).

\begin{teo}[Abresch-Rosenberg]
Any immersed CMC sphere in $\Ek$ is a standard rotational sphere.
\end{teo}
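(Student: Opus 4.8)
The plan is to follow the template of Hopf's theorem, using the Abresch--Rosenberg differential in place of the classical Hopf differential. Let $\Sigma$ be an immersed CMC sphere in $\Ek$. Since $\Sigma$ is topologically a sphere, its conformal structure is that of the Riemann sphere $\overline{\C}\cong\CP^1$, which carries no nonzero holomorphic quadratic differential (the bundle $(T^*\CP^1)^{\otimes 2}$ has negative degree, hence no global holomorphic sections). By Theorem~\ref{abro} the Abresch--Rosenberg differential $Q\,dz^2$ is holomorphic on $\Sigma$, so it must vanish identically: $Q\equiv 0$. The whole problem is thereby reduced to showing that a CMC sphere with vanishing Abresch--Rosenberg differential is a canonical rotational sphere.

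To exploit the condition $Q\equiv 0$, I would first solve the defining relation of $Q$ for $p$, namely $p=\frac{\kappa-4\tau^2}{2(H+i\tau)}A^2$, and substitute it, together with {\bf (C.4)}, back into the equation {\bf (C.3)} for the angle function. A short computation collapses the right-hand side to an expression proportional to $A$ alone:
\[
u_z \;=\; -\,\frac{\Phi(u)}{H+i\tau}\,A,
\qquad
\Phi(u):=(H^2+\tau^2)+\frac{\kappa-4\tau^2}{4}\,(1-u^2).
\]
Combining this with {\bf (C.4)} gives the crucial identity
\[
|\nabla u|^2 \;=\; \frac{4\,|u_z|^2}{\lambda} \;=\; \frac{\Phi(u)^2\,(1-u^2)}{H^2+\tau^2},
\]
so that the squared gradient of the angle function depends only on $u$. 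This is exactly the isoparametric-type condition that characterizes rotationally invariant surfaces: the regular level sets of $u$ are equidistant curves of constant geodesic curvature, and $u$ plays the role of an axial coordinate.

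From here I would argue that $\Sigma$ is rotationally symmetric about a vertical axis and then identify it with a canonical rotational sphere. Since $u^2\le 1$ by {\bf (C.4)}, on the compact surface $u$ attains a maximum and a minimum; the critical points of $u$ are the zeros of the gradient above, i.e. the points with $A=0$, equivalently $u=\pm1$. One shows these are exactly two isolated points---the poles---and that the regular level sets of $u$ between them are the rotation circles, so the induced metric is a metric of revolution and all the fundamental data of Definition~\ref{fundada} become functions of $u$ alone. Equivalently, and more cleanly, I would invoke the classification of rotational CMC spheres stated above (which for the given $H$ produces a unique such sphere, subject to $H^2>-\kappa/4$ when $\kappa-4\tau^2<0$), check that it also has $Q\equiv 0$ and the same reconstructed fundamental data, and conclude by the uniqueness clause of Theorem~\ref{th:formulas}.

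The main obstacle is precisely the passage from $Q\equiv 0$ to rotational symmetry. In the space-form case the vanishing of the Hopf differential is \emph{pointwise} equivalent to umbilicity, which trivially forces a round sphere; here the vanishing of $Q$ carries no such immediate geometric meaning, and one must genuinely integrate Daniel's system \eqref{lasces}. The delicate points are the derivation of the identity expressing $|\nabla u|^2$ as a function of $u$ alone and its use to globalize the symmetry: handling the poles $u=\pm1$, where $A$ vanishes and the conformal parameter degenerates, and checking that $\Sigma$ closes up smoothly there. A pleasant feature that should streamline this is that the sign of $\Phi$ on $(-1,1)$ is governed by its minimum value $\Phi(0)=H^2+\kappa/4$; when $\kappa-4\tau^2<0$ and $H^2\le-\kappa/4$ the factor $\Phi$ would vanish in the interior, forcing critical level sets of $u$ other than the two poles, which is incompatible with $\Sigma$ being a sphere. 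This simultaneously confines $H$ to the range $H^2>-\kappa/4$ and yields the non-existence of CMC spheres outside it.
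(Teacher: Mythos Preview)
Your computation is correct and the plan is viable, but it diverges from the paper's argument at the key step. The paper does not use the isoparametric identity for $|\nabla u|^2$; instead, from $Q\equiv 0$ and \eqref{lasces} it observes directly that $w:=\mathrm{arctanh}(u)$ is \emph{harmonic} on the surface (this is equivalent to your formula $u_z=-\frac{\Phi(u)}{H+i\tau}A$ together with {\bf (C.2)}, but it packages the information more efficiently). One then takes a local conformal parameter $\zeta$ with $\mathrm{Re}\,\zeta=w$; since $A$, $p$, $\lambda$ are all expressible through $u$ and $u_z$, every fundamental datum depends only on $\mathrm{Re}\,\zeta$. By the uniqueness clause of Theorem~\ref{th:formulas}, translation in $\mathrm{Im}\,\zeta$ comes from a one-parameter group of ambient isometries, so the surface is invariant under such a group. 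On a compact surface this group must be the rotations about a vertical axis---except in Berger spheres, where there are other closed one-parameter subgroups; the paper rules these out by noting they act without fixed points, hence would give a nowhere-vanishing tangent field on $\S^2$, a contradiction.

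Two points where your outline is thinner than it needs to be. First, the claim that $|\nabla u|^2=f(u)$ alone forces the level curves of $u$ to have constant geodesic curvature is not true in general; you also need $\Delta u=g(u)$. This does hold here (differentiate your $u_z$ formula and use {\bf (C.2)}), so the gap is easily closed, but it should be stated. Second, your ``compare with the rotational sphere via Theorem~\ref{th:formulas}'' shortcut still requires producing a conformal parameter in which all data depend on one real variable---which is exactly what the harmonicity of $\mathrm{arctanh}(u)$ hands you for free. The paper's route is thus both shorter and more transparent at the globalisation step (poles, smooth closing, and the Berger subtlety), while yours has the pleasant by-product of reading off the admissible range $H^2>-\kappa/4$ from the sign of $\Phi$.
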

\begin{proof}
As the Abresch-Rosenberg $Q_{AR}$ is holomorphic, it must vanish on
any immersed CMC sphere. So, we need to prove that spheres with
$Q_{AR}=0$ are rotational.

First, one can observe that on any CMC surface in $\Ek$, the
equation $Q_{AR}=0$ together with the integrability conditions in
Theorem \ref{th:formulas} imply that the function $w:= {\rm arctanh}
(u)$ is a harmonic function on the surface (here $u$ is the angle
function of the surface). So, once we rule out the case $u = {\rm
const.}$ which does not produce CMC spheres (except for slices in
$\S^2\times \R$), we can define $\zeta$ to be a local conformal
parameter on the surface with ${\rm Re} \, \zeta =w$. Again from the
integrability equations ${\bf (C.1)}$ to ${\bf (C.4)}$ we see that
all fundamental data of the surface depend only on $w$ (and not on
${\rm Im} \, \zeta$). This implies that the surface is a local piece
of some CMC surface invariant by a continuous $1$-parameter subgroup
of ambient isometries of $\Ek$.

If the surface is compact, this isometry subgroup must be the group
of rotations around the vertical axis, with the possible exception
of Berger $3$-spheres (the only space in which there are
non-rotational compact continuous isometry subgroups). However, it
is clear that any element of such a non-rotational isometry subgroup
has no fixed points. Hence, by the invariance property, there is a
globally defined non-zero vector field on the surface (that is
tangent to the orbits). But this is impossible on a sphere. Hence,
the isometry subgroup is always the group of rotations around the
vertical axis, and thus the CMC sphere is rotational, as wished.
\end{proof}

\subsection{The isoperimetric problem in $\Ek$}

The Alexandrov problem is very relevant to the isoperimetric problem
in a Riemannian $3$-manifold $\bar{M}^3$; indeed, any solution to
the isoperimetric problem in $\bar{M}^3$ is a region bounded by a
compact embedded CMC surface. So, for instance, the only candidates
to solve the isoperimetric problem for a given volume in $\H^2\times
\R$ are rotational CMC spheres. Another geometric property satisfied
by isoperimetric solutions is that they are weakly stable, see
Section 2.

The class of isoperimetric solutions in $\R^3$, $\S^3$ and $\H^3$ is
the class of round spheres. The isoperimetric problem in $\S^2\times
\R$ and $\H^2\times \R$ has been also explicitly solved, as follows:

\begin{enumerate}
  \item
The isoperimetric regions in $\H^2\times \R$ are exactly the regions
bounded by the canonical rotational CMC spheres. (Hsiang-Hsiang).
 \item
There is a value $H_1\approx 0.33$ such that the isoperimetric
regions in $\S^2\times \R$ are exactly the regions bounded by the
canonical rotational CMC $H$ spheres with $H\geq H_1$. (Pedrosa).
\end{enumerate}
So, regading complete simply connected Riemannian $3$-manifolds, the
isoperimetric problem is fully solved in $\R^3$, $\S^3$, $\H^3$,
$\S^2\times \R$ and $\H^2\times \R$. A remarkable advance in this
direction has been obtained very recently by F. Torralbo and F.
Urbano \cite{ToUr}, who have added to this list a certain subfamily
of Berger spheres:

\begin{teo}[Torralbo-Urbano]\label{torur}
The solutions to the isoperimetric problem in the Berger spheres
$\Ek$ with $\frac{1}{3}\leq \frac{4\tau^2}{\kappa} <1$ are the
canonical rotational CMC spheres.
\end{teo}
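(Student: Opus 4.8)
The plan is to combine the general variational properties of isoperimetric regions with the Hopf-type classification already available in $\Ek$, reducing the whole statement to a topological claim about the genus of the boundary. First I would record that, since a Berger sphere is compact, for each admissible volume an isoperimetric region exists by the standard compactness and regularity theory of geometric measure theory; its boundary $\Sigma$ is then a smooth compact embedded surface of constant mean curvature $H$ which, being an area minimizer under a volume constraint, is \emph{weakly stable}. The goal is to prove that $\Sigma$ must be a topological sphere: once this is known, the Abresch--Rosenberg classification (any immersed CMC sphere in $\Ek$ is a canonical rotational sphere) identifies $\Sigma$ with a canonical rotational CMC sphere and the proof is finished, since among embedded rotational spheres these are the only ones.

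A useful first observation exploits the vertical Killing field $\xi$. The angle function $u=\langle N,\xi\rangle$ satisfies $\mathcal{L}(u)=0$, because $\xi$ generates a flow by ambient isometries, and applying the divergence theorem to the divergence-free field $\xi$ on the region bounded by $\Sigma$ gives $\int_\Sigma u\,dA=0$. Hence $u$ is an admissible test function for weak stability, and being Jacobi it yields $Q(u,u)=-\int_\Sigma u\,\mathcal{L}(u)=0$. Provided $u$ is non-constant (the constant case produces only slices of $\S^2\times\R$, which do not occur here), this equality forces $u$ to attain the bottom of the spectrum of $\mathcal{L}$ on the mean-zero subspace, so that $\Sigma$ has index exactly one.

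To control the genus I would run the standard balancing argument. Taking a conformal branched covering $\Phi\colon\Sigma\to\S^2(1)$ of least degree $d$ (so $d=1$ when $\Sigma$ is a sphere and $d\ge 2$ when the genus $g$ is positive) and, after Hersch's lemma, normalizing its components $\Phi_i$ to have zero mean, I would insert them into the weak stability inequality. Using $\sum_i\Phi_i^2=1$, the energy identity $\sum_i\int_\Sigma|\nabla\Phi_i|^2=8\pi d$, the expression of $\mathcal{L}$ in terms of $K,H,\tau,\kappa,u$, and Gauss--Bonnet $\int_\Sigma K=2\pi(2-2g)$, the inequality collapses to
\[
(4H^2+\kappa)\,\mathrm{Area}(\Sigma)+(\kappa-4\tau^2)\int_\Sigma u^2\,dA\le 8\pi(d+1-g).
\]
Since $\kappa-4\tau^2>0$ throughout the stated range, the left-hand side is non-negative, so whenever $d+1-g<0$ one gets an immediate contradiction; as the least degree $d$ grows roughly like $g/2$, this disposes of all sufficiently high genus. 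The difficulty is that the borderline low-genus cases survive: for $g=0$ with $d=1$ and for $g=1$ with $d=2$ the right-hand side is the \emph{same} value $16\pi$, so this soft argument cannot by itself separate the sphere from the torus.

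The main obstacle is therefore exactly this quantitative separation. The stability inequality only delivers an \emph{upper} bound on a weighted area, so ruling out the torus (the critical competitor, realized by the explicit Hopf-invariant CMC tori) requires a matching \emph{lower} bound for those tori, and it is precisely the hypothesis $\frac13\le\frac{4\tau^2}{\kappa}<1$ that renders the two bounds incompatible for every enclosed volume; for smaller bundle curvature the tori can undercut the spheres, which is why the theorem is confined to this range. I expect the genuinely hard step to be this direct comparison of the isoperimetric profiles of the canonical rotational spheres and the rotational (Hopf) tori, carried out on their explicit parametrizations, rather than the soft reduction eliminating large genus.
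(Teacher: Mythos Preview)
Your overall strategy --- reduce to a topological sphere and then invoke Abresch--Rosenberg --- is sound, and the Hersch/Yang--Yau computation you carry out is correct as written: weak stability together with the explicit form of $\mathcal{L}$ does yield
\[
(4H^2+\kappa)\,\mathrm{Area}(\Sigma)+(\kappa-4\tau^2)\int_\Sigma u^2\,dA\le 8\pi(d+1-g),
\]
and this does kill large genus. But you have correctly identified, and not resolved, the actual content of the theorem: the inequality gives the same bound $16\pi$ for $g=0$ and $g=1$, so nothing you have written excludes the Hopf tori, and your closing paragraph is really an acknowledgement that the proof is incomplete. The suggestion to finish by a ``direct comparison of isoperimetric profiles'' is not an argument; it is a restatement of the problem, and there is no indication of why such a comparison would succeed precisely on the interval $\frac{1}{3}\le \frac{4\tau^2}{\kappa}<1$ and fail outside it. (There is also a minor gap earlier: you implicitly assume the isoperimetric boundary is connected, which in a compact ambient space requires justification.)

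The paper's route is entirely different and bypasses the genus dichotomy altogether. Torralbo and Urbano embed the Berger sphere $\Ek$ isometrically into $\C P^2$ and then apply a Willmore-type inequality in $\C P^2$ due to Montiel and Urbano. That inequality gives a sharp lower bound for a Willmore functional of any compact surface in $\C P^2$, with equality characterized; translated back through the embedding, it produces exactly the estimate needed to force the weakly stable boundary to be a canonical rotational sphere, and the numerical restriction $\frac{4\tau^2}{\kappa}\ge \frac{1}{3}$ arises naturally from the geometry of that embedding. So the decisive tool is an \emph{extrinsic} inequality coming from a higher-dimensional ambient space, not an intrinsic stability/eigenvalue argument on $\Sigma$. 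Your soft argument is a reasonable warm-up, but to complete the proof along the paper's lines you would need to replace the unproved profile comparison by the $\C P^2$ embedding and the Montiel--Urbano Willmore inequality.
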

The proof of this result relies on embedding the Berger spheres
$\Ek$ into the $4$-dimensional complex space $\C P^2$, and using a
Willmore inequality in this space due to Montiel and Urbano
\cite{MoUr}.

For the rest of the spaces, the isoperimetric problem is open. In
any case, the general theory of the isoperimetric problem together
with the Abresch-Rosenberg uniqueness theorem imply that, for small
volumes, the isoperimetric solutions are canonical rotational CMC
spheres with large $H$.

\subsection{Open problems}
One of the major unsolved problems in the theory is the Alexandrov
problem when $\tau \neq 0$, i.e. in $\Nil$, the universal cover of
$\psl$ and Berger hemispheres. It is conjectured that canonical
rotational spheres are the only compact embedded CMC surfaces in
these spaces. A related open problem is the isoperimetric problem in
$\Nil$, the universal cover of $\psl$ and the Berger spheres not
covered by Theorem \ref{torur}. In the first two cases, it is
conjectured that the isoperimetric solutions are exactly the
canonical rotational spheres.

Besides, it is conjectured by Nelli and Rosenberg \cite{NeRo2} that
compact weakly stable CMC surfaces in $\H^2\times \R$ are rotational
CMC spheres.

Another important problem of the theory is the construction of
higher genus compact (immersed) CMC surfaces, (e.g. CMC tori) in
$\Ek$ with $\kappa\leq 0$.

\section{CMC spheres in $\sol$}

In this section we will expose the recent solution to the Alexandrov
problem (i.e. the classification of compact embedded CMC surfaces)
and the Hopf problem (i.e. the classification of immersed CMC
spheres) in the remaining Thurston $3$-geometry: the homogeneous
space $\sol$.

The first step in this direction is that we can solve the Alexandrov
problem \emph{from a topological point of view}.

\begin{teo}[Rosenberg]\label{rosenberg}
Any compact embedded CMC surface in $\sol$ is, topologically, a
sphere.
\end{teo}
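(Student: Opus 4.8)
The plan is to mimic the proof of the Alexandrov theorem, exploiting the two foliations $\cF_1=\{x_1=\mbox{const}\}$ and $\cF_2=\{x_2=\mbox{const}\}$ of $\sol$ by totally geodesic copies of $\H^2$, across whose leaves reflections are isometries. Let $\Sigma$ be a compact embedded CMC surface; it bounds a compact region $W$, and choosing $\eta$ to be the outward normal I may assume its mean curvature satisfies $H\geq 0$. The case $H=0$ is empty: a compact minimal surface would be tangent from one side to an extremal leaf of $\cF_1$ at the maximum of $x_1|_\Sigma$, and since leaves are minimal the maximum principle would force $\Sigma$ to coincide with a (noncompact) leaf.

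First I would run the Alexandrov reflection procedure with the family of leaves $\{x_1=c\}$: starting with $c$ large and decreasing it, I reflect the cap $\Sigma\cap\{x_1\geq c\}$ across the moving leaf. Since these reflections are ambient isometries preserving $H$, the reflected cap is again a CMC surface of the same mean curvature, so at the first interior (resp. boundary) contact the interior maximum principle (resp. the boundary point lemma) forces $\Sigma$ to be symmetric with respect to a leaf $\Pi_1=\{x_1=c_1\}$. Moreover, up to that symmetry leaf the cap stays a graph, so $\Sigma^+:=\Sigma\cap\{x_1\geq c_1\}$ is a graph over a domain $\Omega\subset\Pi_1\cong\H^2$ meeting $\Pi_1$ orthogonally along $\partial\Omega=\Sigma\cap\Pi_1$. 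Thus $\Sigma$ is the double of $\Omega$, whence $\chi(\Sigma)=2\chi(\Omega)$, and it suffices to prove that $\Omega$ is a disk. Running the same procedure with $\cF_2$ yields a second symmetry leaf $\Pi_2=\{x_2=c_2\}$; as $R_1R_2$ is the rotation by $\pi$ about the geodesic $\Pi_1\cap\Pi_2$ (an intersection of totally geodesic surfaces), the surface carries a $\Z_2\times\Z_2$ symmetry.

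The main obstacle is precisely this last reduction: reflective symmetry in the two horizontal directions is by itself \emph{insufficient}, because a torus can be symmetric across two perpendicular planes, and $\sol$ admits no reflection in the remaining vertical direction to complete the classical argument. To rule out higher genus I would use that the leaves of both foliations are minimal. Writing $\Sigma^+$ as a graph $x_1=g$ over $\Omega$, with $\eta$ pointing toward increasing $x_1$ and $W$ underneath, the tangency principle against the minimal leaves shows that $g$ has no interior local minimum: at such a point $\Sigma^+$ would lie locally above a tangent minimal leaf, which forces the mean curvature vector to satisfy $\vec H\cdot\partial_{x_1}\geq 0$, whereas for our graph $\vec H\cdot\partial_{x_1}=-H<0$. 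Hence $\chi(\Omega)=(\#\,\mbox{maxima})-(\#\,\mbox{saddles})$ of $g$, and the whole difficulty collapses to showing this equals $1$, i.e. that $\Omega$ is a disk.

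Equivalently, one must show that the level curves $\Sigma\cap\{x_1=c\}$ stay connected for regular $c$, since a saddle of $g$ is exactly what disconnects a level curve and turns $\Omega$ into an annulus (or worse). This is the delicate point where the special geometry of $\sol$ has to enter, and I expect it to be the crux of the proof. My intended route is a further maximum principle argument using the transverse minimal foliation $\cF_2$: inside each leaf of $\cF_1$ one compares $\Sigma$ with the geodesics cut out by $\cF_2$, and exploits the fact that $\Sigma^+$ is simultaneously symmetric and graphical in both horizontal directions, to show that a saddle of $g$ is incompatible with this double structure. Making this incompatibility rigorous—thereby forcing $\chi(\Omega)=1$ and $\chi(\Sigma)=2$—is, in my view, where the real work lies, precisely because the absence of a third reflection must be compensated entirely by the minimality of the two available foliations.
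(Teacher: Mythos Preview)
You have set up exactly the right machinery but then fail to use it. The Alexandrov reflection with respect to $\cF_2$ does not merely produce a second $\Z_2$-symmetry: just as for $\cF_1$, it shows that $\Sigma$ is a \emph{bi-graph} in the $x_2$-direction, i.e.\ each line $\{x_1=a,\;x_3=b\}$ meets $\Sigma$ in at most two points. You record this graph property for the first reflection but discard it for the second, keeping only the symmetry. This is precisely the missing step, and once you restore it the ``real work'' you anticipate evaporates.

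Indeed, apply the $x_2$ bi-graph property to the curve $\partial\Omega=\Sigma\cap\Pi_1$ sitting in the $(x_2,x_3)$-plane $\Pi_1$: every line $\{x_3=\mathrm{const}\}$ meets $\partial\Omega$ in at most two points. If $\partial\Omega$ had two components, the outer one would enclose the inner one (since $\Omega$ is a bounded planar domain), and any line through the inner component would already meet the outer one twice, giving at least three intersection points---a contradiction. Hence $\partial\Omega$ is a single Jordan curve, $\Omega$ is a disk, and $\chi(\Sigma)=2$. This is exactly the paper's argument: a compact embedded surface that is a bi-graph with respect to two linearly independent directions is automatically a sphere. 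Your Morse-theoretic detour via the minimality of the leaves (no interior minima of $g$) is correct in spirit but, as you yourself note, does not by itself control the number of saddles; and your proposed fix via ``geodesics cut out by $\cF_2$'' is just a roundabout way of rediscovering the bi-graph property you already had in hand. (A minor point: with $\eta$ the \emph{outward} normal of a compact region one has $H\le 0$, not $H\ge 0$; this sign slip does not affect the main issue.)
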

\begin{proof}
By Alexandrov reflection principle using the two canonical
foliations of $\sol$ (recall that reflections across their leafs are
orientation-reversing isometries of $\sol$), it turns out that any
compact embedded CMC surface in $\sol$ is a bi-graph with respect to
two linearly independent directions in $\R^3$. Thus, the surface is,
topologically, a sphere.
\end{proof}

This result leaves us with the problem of classifying (embedded) CMC
spheres. A substantial difficulty for this task is that $\sol$ has
no rotations. Hence, there are no rotational CMC spheres to use in
order to gain insight of the theory, and even the existence of CMC
spheres for a given value of $H$ needs to be settled.

The next theorem is the main result of the section, and solves the
Hopf and Alexandrov problems in $\sol$.

\begin{teo}[Daniel-Mira, Meeks]\label{mainsol}
For every $H>0$ there exists an embedded CMC $H$ sphere $S_H$ in
$\sol$. This sphere is unique in the following sense:
 \begin{enumerate}
   \item
\emph{Hopf uniqueness:} every immersed CMC $H$ sphere in $\sol$ is a
left-translation of $S_H$.
 \item
\emph{Alexandrov uniqueness:} every compact embedded CMC $H$ surface
in $\sol$ is a left-translation of $S_H$.
 \end{enumerate}
Moreover, each sphere $S_H$ has index one, it inherits all possible
symmetries of the ambient space (its group of ambient isometries is
the diedral group $D_4$), its Lie group Gauss map is a
diffeomorphism, and the family $\{S_H: H>0\}$ is real analytic (up
to left translations).
\end{teo}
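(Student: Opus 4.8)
The plan is to replace the rotational symmetry that $\sol$ lacks by its Lie group structure and to reduce everything to the study of the \emph{left-invariant Gauss map} of a CMC surface. Using the orthonormal left-invariant frame $E_1,E_2,E_3$ of Section 2, I would assign to an oriented immersed surface $\Sigma\subset\sol$ its Gauss map $G:\Sigma\to\S^2$, recording the components of the unit normal in this frame. The first task is to write the compatibility equations of surface theory in $\sol$ with respect to a conformal parameter $z$, in the spirit of Theorem \ref{th:formulas}, and to extract from them the equation satisfied by $G$ when $H$ is constant. The expected outcome is that $G$ solves a second-order elliptic system of harmonic-map type into $\S^2$ endowed with a suitable non-round metric, and that there is an associated quadratic differential $Q\,dz^2$ --- the analogue of the Abresch--Rosenberg differential of Section 3 --- which is holomorphic precisely because $H$ is constant. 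This is the technical core of the whole argument.

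For the \textbf{Hopf uniqueness} I would argue as follows. If $\Sigma$ is an immersed CMC $H$ sphere, then $Q\,dz^2$ is a holomorphic quadratic differential on a genus-zero surface and hence vanishes identically. Vanishing of $Q$ should force $G$ to be conformal (up to orientation) for the induced conformal structure, so that $G$ is a branched conformal covering of $\S^2$; a degree and branch-point count then yields that $G$ is an unbranched diffeomorphism, which is the Gauss map assertion of the theorem. Once $G$ is a diffeomorphism, I would recover all the fundamental data of the surface algebraically from $G$ and $H$ through the structure equations, so that the immersion is determined up to a left translation. This gives that any two immersed CMC $H$ spheres are congruent by a left translation.

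For \textbf{existence} I would run a continuity argument in $H$. For small volumes the isoperimetric regions of $\sol$ are bounded by embedded CMC surfaces that, by Theorem \ref{rosenberg}, are spheres, and small spheres carry large mean curvature; this furnishes CMC $H$ spheres for $H$ large. I would then show that the set of $H>0$ admitting a CMC $H$ sphere is open and closed. Openness follows from the implicit function theorem once the linearized operator --- the Jacobi operator $\mathcal{L}$ modulo the translational Jacobi fields --- is shown to be an isomorphism, i.e. the spheres are nondegenerate. Closedness is the delicate point: it requires uniform curvature and diameter bounds ensuring that a sequence of CMC $H_n$ spheres with $H_n\to H_\infty>0$ neither degenerates nor escapes to infinity, so that a subsequence converges to a CMC $H_\infty$ sphere; the Gauss map diffeomorphism and the uniqueness above supply the compactness needed.

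\textbf{Alexandrov uniqueness} then follows by combining Theorem \ref{rosenberg}, which makes any compact embedded CMC surface a topological sphere, with the Hopf uniqueness just proved, provided $S_H$ itself is embedded; embeddedness comes from Alexandrov reflection, which exhibits $S_H$ as a bigraph for two independent horizontal directions. The remaining assertions are consequences of uniqueness: since $S_H$ is unique up to left translation, it must be invariant under the full point-stabilizer in the isometry group of $\sol$, namely the dihedral group $D_4$ generated by the orientation-reversing isometries of Section 2; real analyticity of $\{S_H\}$ is the analytic implicit function theorem using nondegeneracy; and the index-one claim is obtained by analysing $\mathcal{L}$, whose kernel is spanned by the three translational Jacobi functions while weak stability of the bounded region leaves exactly one negative eigenvalue. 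The main obstacle throughout is the analytic input of the first paragraph together with the closedness step: producing the correct holomorphic differential on $\sol$ and the \emph{a priori} estimates that prevent CMC spheres from degenerating in a space with only a $3$-dimensional isometry group.
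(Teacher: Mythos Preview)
Your proposal contains a genuine gap at the very point you flag as the technical core: you assume that a holomorphic Abresch--Rosenberg-type differential $Q\,dz^2$ exists for CMC surfaces in $\sol$, and build the Hopf argument on it. The paper states explicitly that such a holomorphic object is \emph{not} available in $\sol$ --- this is precisely what distinguishes $\sol$ from the $\Ek$ spaces. Consequently your chain ``$Q$ holomorphic $\Rightarrow$ $Q\equiv 0$ on spheres $\Rightarrow$ $G$ conformal $\Rightarrow$ $G$ a diffeomorphism $\Rightarrow$ immersion determined up to left translation'' never gets off the ground.

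What Daniel and Mira actually do reverses the logical order. They first \emph{assume} the existence of an index-one CMC $H$ sphere $S_H$. From the index-one hypothesis, via Courant's nodal domain theorem, they deduce that the Gauss map $G$ of $S_H$ is a diffeomorphism (if not, one builds a Jacobi function vanishing to second order at a point). Only then, using $G$ itself, do they \emph{construct} a quadratic differential $Q_H$ on any CMC $H$ surface (formula \eqref{qfor}); this $Q_H$ is not holomorphic, but it has only isolated zeros of negative index, which is enough to force $Q_H\equiv 0$ on spheres, and $Q_H\equiv 0$ characterizes pieces of $S_H$. So uniqueness is proved \emph{conditionally on the existence of an index-one sphere}, and the existence problem --- showing that $\mathcal{I}=\{H>0:\text{an index-one CMC $H$ sphere exists}\}$ equals $(0,\infty)$ --- becomes the remaining obstacle. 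Your continuity outline for existence is in the right spirit (isoperimetric spheres give $\mathcal{I}\neq\emptyset$, implicit function theorem gives openness), but the closedness step is not supplied by ``the Gauss map diffeomorphism and uniqueness''; it rests on Meeks' a priori height estimate for CMC $H$ graphs over leaves of the canonical foliations, which bounds the diameters of the spheres $S_{H_n}$ as $H_n\to H_0>0$.
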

\begin{rem}
Theorem \ref{mainsol} was obtained by Daniel and Mira \cite{DaMi}
for $H>1/\sqrt{3}$. For the remaining values $H\in (0,1/\sqrt{3}]$,
Daniel and Mira also proved the uniqueness in the Hopf and
Alexandrov sense for all values of $H$ for which there exists an
index one CMC $H$ sphere. Finally, Meeks \cite{Mee} obtained the
existence of index one CMC $H$ spheres for every $H>0$ (and not just
for $H>1/\sqrt{3}$). This concluded the proof of Theorem
\ref{mainsol}.
\end{rem}

We shall split the sketch of the proof of Theorem \ref{mainsol} into
two parts.

\subsection{Proof of Theorem \ref{mainsol}: uniqueness}

The results of this part are contained in \cite{DaMi}. The Lie group
Gauss map $g:\Sigma\flecha \overline{\C}$ of a CMC surface
$X:\Sigma\flecha \sol$ satisfies the following elliptic PDE (here
$z$ is a conformal parameter on the surface):

\begin{equation} \label{eqg2}
g_{z\bar z}=A(g)g_zg_{\bar z}+B(g)g_z\bar g_{\bar z},
\end{equation}
where, by definition,
\begin{equation} \label{defAB2}
A(q)=\frac{R_q}R=\frac{2H(1+|q|^2)\bar q+2q}{R(q)},\quad
B(q)=\frac{R_{\bar q}}R-\frac{\bar R_{\bar q}}{\bar R}=-\frac{4H(1+|q|^2)(\bar q+q^3)}{|R(q)|^2},
\end{equation}
$$R(q)=H(1+|q|^2)^2+q^2-\bar q^2.$$
Moreover, the surface $X$ is uniquely determined by the Gauss map $g$, and it can actually be recovered from $g$ by means of an integral representation formula.

Once here, the first idea in order to prove a Hopf-type theorem is
to look for a holomorphic quadratic differential for CMC surfaces in
$\sol$. However, it seems that such a holomorphic object is not
available in the theory; this constitutes another key difference
from the theory of CMC surfaces in the other Thurston $3$-geometries
exposed in the previous section, where the Abresch-Rosenberg (or the
Hopf differential) is holomorphic.

Still, it is not strictly necessary to obtain a holomorphic differential in order to prove a Hopf-uniqueness theorem: it suffices to find a geometrically defined quadratic differential with isolated zeros of negative index, so that it vanishes identically on spheres. This is done as follows.

\begin{teo}[Daniel-Mira]
Let $H>0$, and assume that there exists an index one CMC $H$ sphere $S_H$ in $\sol$. Then there exists a quadratic differential $Q_H$, geometrically defined on any CMC $H$ surface in $\sol$, with the following properties:
 \begin{enumerate}
 \item
It only has isolated zeros of negative index (thus, it vanishes on spheres).
 \item
$Q_H=0$ holds for a surface $X:\Sigma\flecha \sol$ if and only if $X$ is a left-translation of some piece of the sphere $S_H$.
 \end{enumerate}
Moreover, the sphere $S_H$ is embedded, and it is therefore unique
in $\sol$ (up to left-translations) in the Hopf sense and in the
Alexandrov sense.
\end{teo}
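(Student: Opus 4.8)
The plan is to prove a Hopf-type uniqueness theorem \emph{without} a holomorphic differential, using the hypothesized sphere $S_H$ itself as a model against which every CMC $H$ surface is compared. The first step is to exploit the index one hypothesis to show that the Lie group Gauss map $g_H\colon S_H\to\overline{\C}$ is an (orientation preserving) diffeomorphism. This is where the hypothesis genuinely enters: the Jacobi operator $\mathcal{L}$ of $S_H$ controls the branching of $g_H$ through the Gauss map equation \eqref{eqg2}, and index one should force $g_H$ to be a local diffeomorphism of degree one, hence a global diffeomorphism of $\overline{\C}$. Granting this, set $N:=g_H^{-1}\colon\overline{\C}\to S_H$ and, for an arbitrary CMC $H$ surface $X\colon\Sigma\to\sol$ with Gauss map $g$, form the comparison map $F:=N\circ g=g_H^{-1}\circ g\colon\Sigma\to S_H$. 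I would then define $Q_H$ to be the Hopf differential of $F$ relative to the conformal structure of $S_H$, i.e. locally $Q_H=\rho^2\,F_z\,\overline{F_{\bar z}}\,dz^2$ computed in a holomorphic coordinate on the target $S_H$, with $\rho^2$ a conformal factor. This is manifestly a geometrically defined quadratic differential on $\Sigma$, depending only on $g$ and the fixed model $S_H$, and it vanishes exactly where $F$ fails to be locally (anti)conformal.

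Property (2) is then essentially formal. Since the Lie group Gauss map is invariant under left translations of $\sol$, if $X$ is a left translation of a piece of $S_H$ then $g=g_H\circ(\text{conformal reparametrization})$, so $F$ is conformal and $Q_H\equiv 0$; this is the easy implication. Conversely, suppose $Q_H\equiv 0$. Keeping the two surfaces coherently oriented (both with the normal chosen so that $H>0$) makes $F$ orientation preserving, so $Q_H\equiv 0$ forces $F$ to be conformal, i.e. a local biholomorphism $\Sigma\to S_H$ away from the isolated branch points of $g$. Then $g=g_H\circ F$ exhibits the Gauss map of $X$ as that of $S_H$ precomposed with a conformal map. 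Because a CMC surface in $\sol$ is recovered from its Gauss map through the integral representation formula following \eqref{defAB2} (and uniquely so up to left translations), $X$ must be a left translation of the corresponding piece of $S_H$, which is the nontrivial implication of (2).

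Property (1) is the analytic heart of the argument, and the step I expect to be the main obstacle, precisely because $Q_H$ is not holomorphic. The idea is that, although $F$ is not harmonic, it solves an elliptic system inherited from the fact that both $g$ and $g_H$ satisfy \eqref{eqg2}. Differentiating $Q_H$ and substituting \eqref{eqg2} should produce a Carleman--Bers--Vekua equation $\partial_{\bar z}Q_H=a\,Q_H+b\,\overline{Q_H}$ with $a,b$ locally bounded; checking that no inhomogeneous term survives, so that the equation is genuinely of this homogeneous type, is the delicate computation and the place where the compatibility of $g$ and $g_H$ through \eqref{eqg2} is used decisively. Granting it, the Bers similarity principle writes $Q_H=e^{\omega}\Theta$ near any point, with $\omega$ continuous and $\Theta$ holomorphic. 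Hence the zeros of $Q_H$ are isolated, and the horizontal line field of $Q_H$ has at each zero the same index as that of a holomorphic quadratic differential with a zero of order $k\ge 1$, namely $-k/2<0$. This proves (1), and the global consequence is immediate: on a sphere the Poincar\'e--Hopf theorem for line fields forces $\sum_p \mathrm{ind}_p=\chi(\S^2)=2>0$, incompatible with all indices being negative unless $Q_H$ has no zeros; since a line field on $\S^2$ must be singular, the only possibility is $Q_H\equiv 0$. Thus every CMC $H$ sphere has $Q_H\equiv 0$ and, by (2), is a left translation of $S_H$, giving Hopf uniqueness.

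It remains to deduce embeddedness of $S_H$ and Alexandrov uniqueness. For Alexandrov uniqueness I would invoke Theorem \ref{rosenberg}: any compact embedded CMC $H$ surface in $\sol$ is a topological sphere, hence an immersed CMC $H$ sphere, hence by Hopf uniqueness a left translation of $S_H$. Embeddedness of $S_H$ itself I would obtain from the Alexandrov reflection principle applied across the two totally geodesic $\H^2$-foliations $\mathcal{F}_1,\mathcal{F}_2$ of $\sol$: by uniqueness $S_H$ inherits all ambient symmetries, and the reflection/maximum-principle argument shows each half of $S_H$ is a graph over a totally geodesic plane, so $S_H$ is a bi-graph and in particular embedded (consistently, its Gauss map being a diffeomorphism already precludes self-intersections through the representation formula). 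As indicated, the genuinely hard point is establishing the Vekua equation for $Q_H$, and hence the negative index of its zeros; everything else is either formal (the characterization (2)), a standard index count (the vanishing on spheres), or an application of the reflection principle already exploited in Theorem \ref{rosenberg}.
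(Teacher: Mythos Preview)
Your overall strategy matches the paper's: index one forces the Lie group Gauss map $G$ of $S_H$ to be a diffeomorphism (the paper makes this precise via Courant's nodal domain theorem---a non-diffeomorphic $G$ would yield a Jacobi function on $S_H$ vanishing to second order at a point, forcing index $\geq 2$), and one then builds from $G$ a comparison quadratic differential on every CMC $H$ surface which satisfies a Cauchy--Riemann type inequality, so that its zeros are isolated of negative index and the usual Poincar\'e--Hopf argument applies on spheres.

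The substantive difference is in the choice of $Q_H$. The paper does \emph{not} take the Hopf differential of $F=G^{-1}\circ g$; it writes down the explicit ansatz
\[
Q_H=\big(L(g)\,g_z^2+M(g)\,g_z\,\bar g_z\big)\,dz^2,
\]
where $M(q)=1/R(q)$ is dictated by the structure of the Gauss map PDE \eqref{eqg2} precisely so that the Vekua-type inequality holds, and $L$ is then fixed by the condition $L(G)=-M(G)\,\bar G_w/G_w$ forcing $Q_H\equiv 0$ on $S_H$. Your Hopf differential $\rho^2\,\phi_z\,\overline{\phi_{\bar z}}\,dz^2$ is a genuinely different object: it does not differ from the paper's by a nonvanishing factor (yours vanishes also where $\phi_z=0$), and your claim that a coherent orientation makes $F$ orientation preserving is unjustified, since the Gauss map $g$ of a general CMC $H$ surface in $\sol$ need not be a local diffeomorphism. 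So the Vekua computation you correctly flag as the obstacle is genuinely open for your $Q_H$; the paper sidesteps it by engineering $Q_H$ directly from \eqref{eqg2} rather than via a Hopf differential. Your embeddedness sketch also needs repair: Alexandrov reflection presupposes embeddedness, so invoking it to \emph{prove} embeddedness is circular, and a diffeomorphic Gauss map does not by itself rule out self-intersections.
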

The quadratic differential $Q_H$ is constructed as follows. Let
$G:S_H\equiv \overline{\C}\flecha \overline{\C}$ denote the Gauss
map of $S_H$. Then $G$ is a diffeomorphism (otherwise one can
construct a Jacobi function $u$ on $S_H$ with $u(p)=\nabla u(p)=0$
at some $p\in S_H$, which contradicts the index one condition by
Courant's nodal domain theorem).

Once here, the differential $Q_H$ is defined for any CMC $H$ surface
$X:\Sigma\flecha \sol$ with Gauss map $g:\Sigma\flecha
\overline{\C}$ by
\begin{equation}\label{qfor}
Q_H = ( L(g) g_z^2 + M(g) g_z \bar{g}_z )\, dz^2,
\end{equation}
where by definition
\begin{equation} \label{defM}
M(q)=\frac1{R(q)}=\frac1{H(1+|q|^2)^2+q^2-\bar q^2}
\end{equation}
and $L:\overline{\C}\flecha \C$ is implicitly given in terms of the
Gauss map $G$ of $S_H$ by
\begin{equation}\label{elege}
L(G(z))=-\frac{M(G(z))\bar G_z(z)}{G_z(z)}.
\end{equation}

It must also be emphasized that, by this uniqueness theorem, any index one CMC sphere $S_H$ in $\sol$ is as symmetric as the ambient space allows: there is a point $p\in \sol$ such that $S_H$ is invariant with respect to all the isometries of $\sol$ that leave $p$ fixed.

\subsection{Proof of Theorem \ref{mainsol}: existence}

Let us define $$\mathcal{I}:=\{H>0 : \text{exists an index one CMC
$H$ sphere $S_H$ in $\sol$}\}.$$ We prove next the theorem by Meeks \cite{Mee} that $\mathcal{I} =(0,\8)$. (The fact that $(1/\sqrt{3},\8)\subset \mathcal{I}$ had been previously obtained in \cite{DaMi}).

That $\mathcal{I}\neq \emptyset$ follows from the existence of
isoperimetric spheres, which in $\sol$ must have index one. That
$\mathcal{I}$ is open was proved in \cite{DaMi}, and follows from
the implicit function theorem and from the continuity of the
eigenvalues and eigenspaces in the deformation.

The proof that $\mathcal{I}$ is closed is the critical step. The key
point is to prevent that the diameters of a sequence of CMC $H_n$
spheres $(S_{H_n})$ with $H_n\to H_0 >0$ tend to $\8$. This was
proved first by Daniel-Mira, but only for $H_0>1/\sqrt{3}$. The
final proof for every $H_0>0$ was recently given by Meeks
\cite{Mee}, using the following height estimate: \emph{there exists
a constant $K(H_0)$ such that for any CMC $H_0$ graph (possibly
non-compact) with respect to one of the two canonical foliations of
$\sol$, and with boundary on a leaf, the maximum height attained by
the graph with respect to this leaf is $\leq K(H_0)$}.

Once this height estimate is ensured, Meeks concludes the proof by
some elliptic theory and stability arguments.

\subsection{Open problems}

Are CMC spheres in $\sol$ weakly stable? Do they all bound
isoperimetric regions in $\sol$? A positive answer is conjectured in
\cite{DaMi}. What happens in other homogeneous $3$-spaces with
$3$-dimensional isometry group?

It seems very interesting to develop a global theory of minimal
surfaces in $\sol$. Some natural problems would be proving
half-space theorems, classifying entire minimal graphs, or finding
properly embedded minimal surfaces of non-trivial topology.

%%%%%%%%
\section{Surfaces of critical CMC}
As we saw in Section 3, CMC $H$ spheres in the homogeneous space
$\Ek$ exist exactly for the values $H^2 >-\kappa/4$. Besides, one
can easily see that there exist entire rotational CMC $H$ graphs in
$\Ek$, $\kappa\leq0$, whenever $H^2\leq -\kappa /4$. From these
results and the maximum principle, we obtain

\begin{teo}
Any compact CMC $H$ surface in $\Ek$ satisfies $H^2 >-\kappa/4$.
Also, any entire CMC graph in $\Ek$, $\kappa\leq 0$, satisfies $H^2
\leq -\kappa/4$.
\end{teo}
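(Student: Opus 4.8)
The plan is to use as barriers the two explicit families described just above: the canonical rotational CMC $H$ spheres, which exist precisely when $H^2>-\kappa/4$, and the entire rotational CMC $H$ graphs, which exist in $\Ek$ with $\kappa\le 0$ whenever $H^2\le-\kappa/4$. In each case I would assume the opposite inequality, produce the corresponding barrier family, and slide it against the surface under study until a first interior tangency occurs; the geometric maximum principle then forces the two CMC $H$ surfaces to agree, which is absurd. A structural fact I would use repeatedly is that the flow $\{\phi_t\}_{t\in\R}$ of the vertical Killing field $\xi=\partial/\partial x_3$ consists of ambient isometries, so any CMC $H$ surface may be translated vertically through a one-parameter family of CMC $H$ surfaces.

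For the first assertion, the case $\kappa>0$ is immediate, since then $H^2\ge 0>-\kappa/4$. So suppose $\kappa\le 0$ and that, contrary to the claim, there is a compact CMC $H$ surface $\Sigma$ with $H^2\le-\kappa/4$. Then an entire rotational CMC $H$ graph $G$ exists, and its vertical translates $G_t=\phi_t(G)$ foliate $\Ek$ by CMC $H$ surfaces with a common co-orientation. Since $\Sigma$ is compact, the continuous leaf-parameter $t$ restricted to $\Sigma$ attains a maximum $t_+$ and a minimum $t_-$; at an extreme value the surface $\Sigma$ lies entirely on one side of the corresponding leaf and touches it at an interior point $p$. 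There the two surfaces are tangent and both have mean curvature $H$, so the strong maximum principle for the quasilinear elliptic CMC equation forces them to coincide near $p$; unique continuation then spreads this coincidence over a full connected component, so $\Sigma$ would coincide with an entire graph, contradicting its compactness.

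For the second assertion, suppose $\kappa\le 0$ and that $\Sigma$ is an entire CMC $H$ graph with $H^2>-\kappa/4$. Then a canonical rotational CMC $H$ sphere $S$ exists. Being a graph over the whole of $\Mk$, the surface $\Sigma$ separates $\Ek$, and over the compact projection of $S$ the height of $\Sigma$ is bounded. Placing a vertical translate $\phi_t(S)$ entirely on one side of $\Sigma$ and sliding it back towards $\Sigma$, the compactness of $S$ together with the graphical nature of $\Sigma$ guarantees a first contact at an interior tangency point $q$, with no escape to the boundary since $\Sigma$ has none. Again $S$ and $\Sigma$ are tangent CMC $H$ surfaces at $q$, one lying to one side of the other, so the strong maximum principle yields local coincidence, and unique continuation upgrades this to $\Sigma$ agreeing with $S$ on a full component, which is impossible since $S$ is compact and $\Sigma$ is entire.

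The routine ingredients here — existence of the barrier families, verticality of the translations as isometries, and the compact-versus-entire contradiction — cause no difficulty. The one delicate point, in both parts, is the bookkeeping of co-orientations: the Hopf maximum principle forces coincidence only when the two tangent CMC $H$ surfaces have their mean curvature vectors on the same side of the common tangent plane. I therefore expect the main work to lie in checking that, by choosing the correct extreme leaf in the first part and the correct sliding direction (from above or from below, according to the sign of the angle function) in the second, one always reaches a first-contact configuration in which the maximum principle applies; ensuring that this first contact is genuinely interior, rather than lost at infinity along the entire barrier, is the accompanying technical step.
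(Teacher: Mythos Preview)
Your proposal is correct and is precisely the argument the paper has in mind: the text simply states that the result follows ``from these results and the maximum principle,'' where ``these results'' are exactly the two barrier families you invoke (the canonical rotational CMC $H$ spheres for $H^2>-\kappa/4$ and the entire rotational CMC $H$ graphs for $H^2\le-\kappa/4$ when $\kappa\le 0$). Your sliding argument with vertical translations and the co-orientation bookkeeping is the natural way to flesh out that one-line indication.
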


There are several other properties that make the theory of CMC
surfaces with $H^2>-\kappa /4$ quite different from the theory of
CMC surfaces with $H^2\leq -\kappa/4$. For instance:

\begin{enumerate}
  \item
A properly embedded CMC surface in $\H^2\times \R$ with $H>1/2$ and
finite topology cannot have exactly one end (Espinar, Gálvez,
Rosenberg, \cite{EGR}).
 \item
There exist horizontal and vertical height estimates for CMC
surfaces with $H>1/2$ in $\H^2\times \R$ \cite{NeRo2,AEG1,EGR}.
 \item
There are no complete stable CMC surfaces in $\H^2\times
\R$ with $H>1/\sqrt{3}$, and the result is expected for $H>1/2$.
(Nelli-Rosenberg, \cite{NeRo2}). Besides, there are no complete
stable CMC surfaces with $H>1/2$ in $\H^2\times \R$ of
parabolic conformal structure (Manzano-Pérez-Rodríguez,
\cite{MaPR}).
\end{enumerate}

It is hence natural to introduce the following definition.

\begin{defi}
We say that a CMC surface in $\Ek$ with $\kappa\leq 0$ has
\emph{critical} CMC if its mean curvature $H$ satisfies $H^2 =
-\kappa /4$.
\end{defi}

The critical mean curvature is the largest value of $|H|$ for which
compact CMC surfaces do not exist. Therefore we have $H=1/2$
surfaces in $\H^2\times \R$, minimal surfaces in $\nil$, and
$H=\sqrt{-\kappa}/2$ surfaces in the universal covering of $\psl$. A
remarkable property is that the sister correspondence preserves the
property of having critical CMC, and that every simply connected
surface of critical CMC is the sister surface of some minimal
surface in $\nil$.

In this section we will study the global geometry of surfaces with
critical CMC, focusing on the existence of harmonic Gauss maps and
the classification of entire graphs.

\subsection{Harmonic Gauss maps}

A smooth map $G:M\to N$ between Riemannian manifolds is harmonic if
it is a critical point for the total energy functional. When $M$ is
a surface, harmonicity is a conformal invariant, and it implies that
the quadratic differential
$$ Q_0 dz^2 = \esc{G_z}{G_z} dz^2, $$
is holomorphic, where $z$ is a conformal parameter on $\Sigma,$ and
$\esc{}{}$ denotes the metric in $N$ (see \cite{wood}). We call
$Q_0dz^2$ the {\em Hopf differential} associated to $G$.

The Gauss map of CMC surfaces in $\R^3$ is harmonic into $\S^2$, and
its Hopf differential agrees (up to a constant) with the Hopf
differential of the surface. Moreover, the CMC surface can be
recovered from the Gauss map by a representation formula. This Gauss
map opens the door to the use of strong techniques from harmonic
maps in the description of CMC surfaces.

The same holds for spacelike CMC surfaces in Minkowski $3$-space
$\L^3$, but this time the harmonic Gauss map takes values into
$\H^2$. Let us briefly comment this case, since it will play an
important role in the development of the section. Let
$f:\Sigma\to\L^3$ be a connected spacelike CMC surface, oriented so
that its Gauss map $G$ takes values in $\H^2$. Here $\L^3$ is $\R^3$
with the metric $dx^2 +dy^2 -dz^2$ and $\H^2$ is realized in $\L^3$
in the usual way. It turns out that $G$ is harmonic into $\H^2$ and
its associated Hopf differential agrees (up to a multiplicative
constant) with the Hopf differential of the immersion $f$. Moreover,
the metric of the CMC surface, $\esc{df}{df}=\tau_0|dz|^2$, is
related with $G$ by
$$2\esc{G_z}{G_{\bar{z}}} =\dfrac{\tau_0}{4} + \dfrac{4|Q_0|^2}{\tau_0}.$$

\begin{defi}\label{def:wei}
We will say that a harmonic map $G$ into $\H^2$ admits {\em
Weierstrass data} $\{Q_0,\tau_0\}$ if the pullback metric induced by
$G$ can be written as
$$\esc{dG}{dG} = Q_0dz^2 + \mu|dz|^2 + \bar{Q_0}d\bar{z}^2, \qquad
\mu=\dfrac{\tau_0}{4} + \dfrac{4|Q_0|^2}{\tau_0},$$ $\tau_0$ being a
positive smooth function.
\end{defi}

%%%%%%%%%%%%%%%%%%%%%%%%%%%%%%%%%%%%%%%%%%

\subsubsection{$H=1/2$ surfaces in $\H^2\times\R$}

We will regard $\hr=\E^3(-1,0)$ in its Minkowski model, i.e.
$$ \H^2\times\R=\{(x_0,x_1,x_2,x_3): \, x_0>0, -x_0^2+x_1^2+x_2^2=-1\}\subset \L^3\times \R=\L^4.$$
Using this model, the unit normal vector $\eta$ of an immersed
surface $\psi=(N,h):\Sigma\to\H^2\times\R$ takes values in the de
Sitter $3$-space, and $\{\eta,N\}$ is an orthonormal frame for the
Lorentzian normal bundle of $\psi$ in $\L^4$. Moreover, if $u$ is
the angle function of the surface (that is, the last coordinate of
$\eta$) and we assume that $u\neq 0$ (that is, that $\psi$ is
nowhere vertical, or equivalently, that it is a multigraph), then we
can write
\begin{equation}\label{eq:gauss12}
\dfrac{1}{u} (\eta + N)=(G,1),
\end{equation}
for a certain map $G:\Sigma\to\H^2.$

\begin{defi}[\cite{FeMi1}]
The map $G$ given by \eqref{eq:gauss12} will be called the {\em
hyperbolic Gauss map} of an immersed (nowhere vertical) surface in
$\H^2\times\R$.
\end{defi}

The main property of the hyperbolic Gauss map is the following
\cite{FeMi1}:

\begin{teo}[Fern\'andez-Mira]\label{th:gauss12}
The hyperbolic Gauss map of a CMC surface with $H=1/2$ in
$\H^2\times\R$ is a harmonic map into $\H^2$, and admits Weierstrass
data $\{-Q,\lambda u^2\},$ where $Qdz^2$, $\lambda|dz|^2$ and $u$
are, respectively, the Abresch-Rosenberg differential, the metric,
and the angle function of the surface.

Conversely, if $\Sigma$ is simply connected, any harmonic map
$G:\Sigma\flecha \H^2$ admitting Weierstrass data is the hyperbolic
Gauss map of some $H=1/2$ surface in $\H^2\times \R$.

Moreover, the space of $H=1/2$ surfaces in $\H^2\times \R$ with the
same hyperbolic Gauss map $G$ is generically two-dimensional, and it
can be recovered from $G$ by a representation formula.
\end{teo}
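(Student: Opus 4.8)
The plan is to prove the three assertions by working throughout in the Minkowski model $\hr\contenido\L^4=\L^3\por\R$. Write $\psi=(N,h)$, so that the vector $\mathbf N=(N,0)$ is the unit normal of $\hr$ in $\L^4$ while $\eta$ is the unit normal of $\Sigma$ inside $\hr$; thus $\{\eta,\mathbf N\}$ is a Lorentzian orthonormal frame of the normal bundle of $\psi$ in $\L^4$ and $\eta+\mathbf N$ is null. Since its last coordinate is the angle function $u$, one has $\frac1u(\eta+\mathbf N)=(G,1)$ with $G$ mapping into $\H^2$, which is precisely the hyperbolic Gauss map. Because the only normal direction of $\H^2\contenido\L^3$ at a point is the position vector itself, $G$ is harmonic into $\H^2$ if and only if $G_{z\bar z}$ is a multiple of $G$; differentiating $\esc{G_z}{G}=0$ in $\bar z$ forces this multiple to be $\esc{G_z}{G_{\bar z}}$.

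The core computation is to differentiate $\frac1u(\eta+\mathbf N)$ twice. I would use $\mathbf N_z=\psi_z-A\,\partial_{x_3}$, which follows from $A=\esc{\xi}{\psi_z}=h_z$, together with the Weingarten equation of $\psi$ in $\L^4$ for $\eta_z$, writing everything through the fundamental data $(\landa,u,H,p,A)$ of Definition \ref{fundada}. After taking a second derivative, I would project $G_{z\bar z}$ onto the tangent plane of $\H^2$ and show that this projection vanishes. This is the step where the integrability conditions {\bf (C.1)}--{\bf (C.4)} of Theorem \ref{th:formulas} enter, specialized to $\kappa=-1$, $\tau=0$, $H=\tfrac12$: the terms obstructing harmonicity cancel exactly because $H^2=-\kappa/4=\tfrac14$ is the critical value and $\tau=0$. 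I expect this cancellation, which uses all four equations simultaneously, to be the main obstacle of the direct part. Once harmonicity is established, evaluating $\esc{G_z}{G_z}$ and using the definition of the Abresch--Rosenberg differential together with {\bf (C.2)} identifies the Hopf differential of $G$ as $-Q\,dz^2$, while computing $\esc{G_z}{G_{\bar z}}$ and comparing with $\mu=\tau_0/4+4|Q_0|^2/\tau_0$ yields $\tau_0=\landa u^2$, giving the Weierstrass data $\{-Q,\landa u^2\}$.

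For the converse I would reconstruct the surface through the integrability Theorem \ref{th:formulas}. Given a harmonic $G\flecha\H^2$ on a simply connected $\Sigma$ with Weierstrass data $\{Q_0,\tau_0\}$, set the prescribed Abresch--Rosenberg differential $Q:=-Q_0$ and look for fundamental data $(\landa|dz|^2,u,\tfrac12,p\,dz^2,A\,dz)$ subject to $\landa u^2=\tau_0$. The specialization of the defining relation of $Q$ gives $p=-Q_0-A^2$, condition {\bf (C.4)} fixes $|A|^2=\tau_0(1-u^2)/(4u^2)$ in terms of $u$, and {\bf (C.2)}--{\bf (C.3)} become first-order equations for $u$ and the phase of $A$. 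The key point is that, because $Q_0$ is holomorphic and $\tau_0$ satisfies the harmonic map equation of $G$, these equations are mutually compatible, so {\bf (C.1)}--{\bf (C.4)} all hold; verifying this compatibility of the over-determined system is the main obstacle here. Simple connectivity then lets Theorem \ref{th:formulas} integrate the data into an $H=\tfrac12$ immersion into $\hr$, which by construction has hyperbolic Gauss map $G$.

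Finally, to count the surfaces sharing a fixed $G$ and to produce the representation formula, I would analyze the freedom in the reconstruction above. The map $G$ determines $\esc{dG}{dG}$, hence $Q_0$, $\mu$ and (up to the two roots of $\mu=\tau_0/4+4|Q_0|^2/\tau_0$) the product $\landa u^2=\tau_0$ and $Q=-Q_0$; what remains free is the splitting of $\tau_0$ into $\landa$ and $u$ together with the phase of $A$, constrained by {\bf (C.1)}--{\bf (C.3)}. Reducing this residual system to an explicitly integrable form, I would show its solution space is generically two real-dimensional (degenerating on a lower-dimensional locus, whence ``generically''); one of the two parameters is the vertical translation, which leaves $N$, $\eta$, $u$ and therefore $G$ and all fundamental data unchanged. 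The representation formula is then obtained by inverting $\frac1u(\eta+\mathbf N)=(G,1)$: expressing $\eta$, $N$ and the height $h$, and hence $\psi$, as explicit expressions and integrals in $G$, $Q_0$ and $\tau_0$, which is precisely the integral representation underlying Theorem \ref{th:formulas} specialized to critical mean curvature.
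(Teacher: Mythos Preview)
Your proposal is correct and follows essentially the same approach as the paper: the direct part is obtained by differentiating the defining relation $\frac{1}{u}(\eta+\mathbf N)=(G,1)$ and invoking the integrability equations {\bf (C.1)}--{\bf (C.4)} specialized to $\kappa=-1$, $\tau=0$, $H=\tfrac12$, while the converse is an integrability argument via Theorem \ref{th:formulas}. The paper's own proof is only a two-sentence sketch, and your outline simply fleshes out those two sentences in the expected way.
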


The proof of the direct part of the above result follows from
equations \eqref{lasces} and the very definition of $G$. The
converse part is an integrability argument. This result is of great
importance for the rest of this section, since it allows the use of
harmonic maps in the description of surfaces of critical CMC.

%%%%%%
\subsubsection{Minimal surfaces in $\Nil$}
The existence of this harmonic Gauss map for $H=1/2$ surfaces in
$\H^2\times \R$ was extended by B. Daniel \cite{Dan2} to the case of
minimal surfaces in $\Nil=\E^3(0,\frac{1}{2})$.

This time, the harmonic Gauss map is given by the Lie group Gauss
map of the surface. Indeed, if we identify the Lie algebra of $\Nil$
with the tangent space at a point by left multiplication, we can
stereographically project the unit normal vector field to obtain a
map taking values in the extended complex plane. More specifically,
we will consider the model of $\Nil$ given in Section 2 and its
canonical frame of left-invariant fields $\{E_1,E_2,E_3\}$. If
$N=\sum N_i \, E_i$ is the unit normal of $X:\Sigma\flecha \Nil$,
then the Gauss map of $X$ is given by
$$g= \frac{N_1+iN_2}{1+N_3}:\Sigma\flecha \overline{\C}.$$

Now, if the surface is nowhere vertical we can orient it so that
$u=\esc{N}{E_3}$ is positive, and so $g$ takes values in the unit
disc $\D$. By identifying $\H^2$ with $(\D,ds_P^2)$, where $ds_P^2$
is the Poincaré metric, Daniel obtained in \cite{Dan2}:
\begin{teo}[Daniel]\label{th:gauss0}
The Gauss map of a nowhere vertical minimal surface is harmonic into
$\H^2$.

Conversely, let $g:\Sigma\to\H^2$ be a harmonic map defined on  a
simply connected oriented Riemann surface into $\H^2$, and assume
that $g$ is nowhere antiholomorphic (i.e., $g_z$ does not vanish at
any point). Take $z_0\in\Sigma$ and $X_0\in\nil$.

Then there exists a unique conformal nowhere vertical minimal
immersion $X:\Sigma\to\Nil$ with $X(z_0)=X_0$ having $g$ as its
Gauss map. Moreover, $X$ can be uniquely recovered from $g$ through
an adequate representation formula.
\end{teo}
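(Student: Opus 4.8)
The plan is to handle both implications through the fundamental data of Definition~\ref{fundada} and Daniel's integrability system (C.1)--(C.4) of Theorem~\ref{th:formulas}, specialized to $\nil=\E^3(0,\frac12)$, that is, $\kappa=0$, $\tau=\frac12$, $H=0$. In this case the system becomes $p_{\bar z}=-\frac{\lambda}{2}\,uA$, $A_{\bar z}=\frac{i}{4}\,u\lambda$, $u_z=\frac{i}{2}A-\frac{2p}{\lambda}\bar A$ and $\frac{4|A|^2}{\lambda}=1-u^2$, while the Abresch--Rosenberg differential reduces to $Q\,dz^2=(ip+A^2)\,dz^2$, holomorphic by Theorem~\ref{abro}. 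The backbone of the whole argument is the purely algebraic link between the Gauss map and the angle function: writing $N=\sum N_iE_i$ and stereographically projecting gives $|g|^2=\frac{1-N_3}{1+N_3}$, hence $u=N_3=\frac{1-|g|^2}{1+|g|^2}$. Thus $g$ determines $u$, and $u>0$ (the nowhere-vertical hypothesis) is exactly the condition that $g$ take values in $\D=\H^2$.

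For the direct statement I would differentiate $g=\frac{N_1+iN_2}{1+N_3}$ along the surface and replace the derivatives of the frame components $N_i$ by the Weingarten-type equations of $X$ in $\nil$, where the left-invariant Levi-Civita connection contributes the structure constants of the canonical frame together with the identity $\hat\nabla_Y\xi=\tau\,Y\times\xi$. This expresses $g_z$, $g_{\bar z}$ and $g_{z\bar z}$ in terms of the fundamental data; in particular I expect $A$ to appear as a nonvanishing multiple of $g_z$, so that the Hopf differential $\esc{g_z}{g_z}\,dz^2$ of $g$ is a constant multiple of $Q\,dz^2$. Substituting (C.1)--(C.4) with $H=0$ into the harmonic map operator $g_{z\bar z}+\frac{2\bar g}{1-|g|^2}g_zg_{\bar z}$ for $(\D,ds_P^2)$, I expect a cancellation that uses minimality decisively and forces this operator to vanish; this is the harmonicity of $g$ into $\H^2$.

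For the converse I would run this computation backwards. Given a harmonic $g:\Sigma\to\H^2$ with $g_z$ nowhere vanishing and $\Sigma$ simply connected, set $u=\frac{1-|g|^2}{1+|g|^2}$, define $A$ as the prescribed nonvanishing multiple of $g_z$ (here the nowhere-antiholomorphic hypothesis $g_z\neq0$ is precisely what makes $A\neq0$, hence $\lambda=4|A|^2/(1-u^2)$ a genuine metric), read off $p$ from $Q=ip+A^2$ with $Q\,dz^2$ the holomorphic Hopf differential of $g$, and let $\lambda$ be fixed by (C.4). The point to verify is that harmonicity of $g$ forces these data to satisfy the entire system: (C.4) holds by construction, (C.2) and (C.3) follow from the harmonic map equation together with the definitions of $A$ and $u$, and (C.1) is the Codazzi equation, which via (C.2) amounts to the holomorphicity of $Q$; since $Q\,dz^2$ is the Hopf differential of the harmonic map $g$ it is indeed holomorphic, so (C.1) follows. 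Once (C.1)--(C.4) hold, the sufficiency part of Theorem~\ref{th:formulas} yields a minimal immersion $X:\Sigma\to\nil$ realizing these data, unique up to orientation-preserving ambient isometries; fixing $X(z_0)=X_0$ removes this freedom, and integrating the expression of $X_z$ in the canonical frame gives the representation formula.

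The main obstacle in both directions is the bookkeeping of the direct computation: correctly incorporating the left-invariant connection of $\nil$, isolating the precise factor relating $g_z$ and $A$, and pinning down the cancellation produced by $H=0$. A conceptually lighter alternative is to invoke the sister correspondence of Theorem~\ref{th:sister}: a minimal surface in $\nil$ is the sister (phase $\theta=\pi/2$) of an $H=\frac12$ surface in $\hr$, and since the correspondence preserves $\lambda$ and $u$ and transforms the Abresch--Rosenberg differentials by $Q_2=-Q_1$, one may try to show that the Lie group Gauss map of the former agrees, up to an isometry of $\H^2$, with the hyperbolic Gauss map of the latter; both statements would then follow from Theorem~\ref{th:gauss12}. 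The residual work in that route is exactly the identification of these two a priori different Gauss maps, which again comes down to comparing their expressions in the shared fundamental data.
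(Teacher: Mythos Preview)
The paper does not actually prove this theorem: it is stated as a result of Daniel with reference to \cite{Dan2}, and no argument is supplied in the text. So there is no ``paper's own proof'' to compare your proposal against.

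That said, your strategy is the natural one and is fully consistent with the methodology the paper uses for the companion result, Theorem~\ref{th:gauss12}, whose proof it sketches in exactly this spirit (``the direct part follows from equations~\eqref{lasces} and the very definition of $G$; the converse part is an integrability argument''). Your preliminary computations --- the specialization of (C.1)--(C.4) and of $Q$ to $(\kappa,\tau,H)=(0,\tfrac12,0)$, and the identity $u=(1-|g|^2)/(1+|g|^2)$ --- are all correct, and the plan of reconstructing $(\lambda,u,p,A)$ from $g$ and then invoking the sufficiency half of Theorem~\ref{th:formulas} is exactly what one should do. The honest gap you flag (pinning down the explicit factor between $g_z$ and $A$, and carrying out the cancellation in the harmonic map operator) is genuine bookkeeping, not a conceptual obstacle.

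Your alternative via the sister correspondence is also viable and is in fact alluded to in the paper: the paragraph immediately following Theorem~\ref{th:gauss0} observes that the Lie group Gauss map of a minimal surface in $\nil$ and the hyperbolic Gauss map of its sister $H=\tfrac12$ surface in $\hr$ are \emph{conjugate} harmonic maps into $\H^2$ (the Abresch--Rosenberg differentials differ by sign, the metric and angle function coincide, so the Weierstrass data are $\{-Q,\lambda u^2\}$ versus $\{Q,\lambda u^2\}$). So the identification you propose holds, but only after composing with an orientation-reversing isometry of $\H^2$, not literally as the same map.
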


Furthermore, it can be checked that the Weierstrass data of $g$ as
above are $\{-Q,\lambda u^2\}$, where $Qdz^2$, $\lambda|dz|^2$ and
$u$ are, respectively, the Abresch-Rosenberg differential, the
metric, and the angle function of the surface defined in Section 2.

As we saw in Section 3, minimal surfaces in $\Nil$ and $H=1/2$
surfaces in $\hr$ are related by the sister correspondence, and
sister surfaces have the same metric and angle function (in
particular, the condition of being nowhere vertical is preserved).
As in this case the sister surfaces have opposite Abresch-Rosenberg
differentials, it turns out that their respective harmonic Gauss
maps are conjugate to each other.

The relation between minimal surfaces in $\nil$ and $H=1/2$ surfaces
in $\H^2\times \R$ can be made more explicit by means of the theory
of spacelike CMC surfaces in $\L^3$, as follows.

\begin{teo}[\cite{FeMi3}]\label{co:L3final}
Let $X=(F,t):\Sigma\to\Nil$ be a simply connected nowhere vertical
minimal surface with metric $\lambda|dz|^2$ and angle function $u$,
and $\psi=(N,h):\Sigma\to\hr$ its sister surface.

Then $f:=(F,h):\Sigma\to\L^3$ is a spacelike $H=1/2$ surface in the
Minkowski $3$-space with metric $\lambda{u^2}|dz|^2$ and Hopf
differential $-Qdz^2$, where $Qdz^2$ is the Abresch-Rosenberg
differential of $X$.
\end{teo}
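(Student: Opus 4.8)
The plan is to read off the geometry of $f=(F,h)$ directly from the fundamental data of the two sister surfaces together with the Riemannian submersion structure of $\Nil$, and then to recognize $f$ as the spacelike $H=1/2$ surface whose harmonic Gauss map is the one produced by Theorem \ref{th:gauss12}.

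First I would record what the sister correspondence (Theorem \ref{th:sister}) gives between $X$ and $\psi$. Here $(\kappa_1,\tau_1,H_1)=(0,\tfrac12,0)$ and $(\kappa_2,\tau_2,H_2)=(-1,0,\tfrac12)$, which are admissible since $\kappa_i-4\tau_i^2=-1$ and $H_i^2+\tau_i^2=\tfrac14$; the phase is $\theta=\pi/2$, so $e^{-i\theta}=-i$. Consequently the two surfaces share the conformal factor $\landa$ and the angle function $u$, and $A_2=-iA_1$, $Q_2=-Q$, where $Q=Q_1$ is the Abresch--Rosenberg differential of $X$. Since $\xi=\parc/\parc x_3$ in both models, the height of $\psi$ satisfies $h_z=\esc{\psi_z}{\xi}=A_2=-iA_1$, while the $\Nil$-height satisfies $t_z=\esc{X_z}{\xi}=A_1$.

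Next I would compute the induced metric of $f$ in $\L^3$. Writing $X_z=A_1\xi+V_z$ with $V_z$ horizontal and using that the fibration $\pi$ is a Riemannian submersion, the projection $F=\pi\circ X$ satisfies $\esc{F_z}{F_z}=\esc{V_z}{V_z}=-A_1^2$ and $\esc{F_z}{F_{\bar z}}=\tfrac{\landa}{2}-|A_1|^2$, where I used conformality $\esc{X_z}{X_z}=0$ and $\esc{X_z}{X_{\bar z}}=\landa/2$. By {\bf (C.4)}, $|A_1|^2=\tfrac{\landa}{4}(1-u^2)$. Since in $\L^3$ the $F$-directions are spacelike and $h$ is timelike, $\esc{f_z}{f_z}_{\L^3}=\esc{F_z}{F_z}-h_z^2=-A_1^2-(-iA_1)^2=0$ and $\esc{f_z}{f_{\bar z}}_{\L^3}=\esc{F_z}{F_{\bar z}}-|h_z|^2=\tfrac{\landa}{4}(1+u^2)-\tfrac{\landa}{4}(1-u^2)=\tfrac{\landa u^2}{2}$. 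Hence $z$ is a conformal parameter for $f$, the induced metric is $\landa u^2|dz|^2$, and since $u\neq0$ this metric is positive definite: $f$ is a conformal spacelike immersion with the asserted metric.

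The heart of the argument, and the step I expect to be the main obstacle, is to identify the future-pointing timelike unit normal $\nu\in\H^2$ of $f$ with the hyperbolic Gauss map $G$ of $\psi$. Using the expression $\tfrac1u(\eta+N)=(G,1)$ for $G$, I would verify $\esc{G}{f_z}_{\L^3}=0$; because $G\in\H^2$ already has $\esc{G}{G}=-1$, this orthogonality forces $\nu=G$ up to the natural identification of the two copies of $\L^3$. Checking the orthogonality is exactly where the fundamental data of both sisters must be unwound simultaneously: one must express the de Sitter normal $\eta$ and the $\H^2$-position $N$ of $\psi$ through the integrability equations of Theorem \ref{th:formulas} and relate them to the horizontal part $V_z$ of $X_z$ via the phase relation $A_2=-iA_1$. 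Once $\nu=G$ is established, Theorem \ref{th:gauss12} gives that $\nu$ is harmonic into $\H^2$ with Weierstrass data $\{-Q_2,\landa u^2\}=\{Q,\landa u^2\}$; by the correspondence between spacelike CMC surfaces in $\L^3$ and harmonic maps into $\H^2$ recalled at the beginning of this subsection, a spacelike immersion with harmonic Gauss map is CMC, the normalization of the Weierstrass data pins $H=1/2$, and the Hopf differential of $f$ agrees with $-1$ times that of its Gauss map, namely $-Q\,dz^2$. This yields the theorem.
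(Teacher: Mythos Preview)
The paper states this theorem with a citation to \cite{FeMi3} and gives no proof, so there is no in-paper argument to compare against. I will therefore assess your proposal on its own merits.

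Your first two paragraphs are essentially correct. The sister-correspondence bookkeeping is right, and the computation of the induced metric of $f$ via the Riemannian submersion $\pi:\Nil\to\R^2$ is clean and complete; it really does give $\langle f_z,f_z\rangle_{\L^3}=0$ and $\langle f_z,f_{\bar z}\rangle_{\L^3}=\lambda u^2/2$. One small slip: in $\Nil$ the metric is not a product, so $t_z\neq\langle X_z,\xi\rangle=A_1$; rather $A_1=t_z+\tau(x_2 x_{1,z}-x_1 x_{2,z})$. You do not actually use $t_z=A_1$, so this does not affect your metric computation.

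The real gap is in the last paragraph. The claim ``$\nu=G$'' is doing all the work, and it is neither precisely formulated nor verified. The hyperbolic Gauss map $G$ of $\psi$ takes values in the copy of $\H^2$ sitting inside the $\L^3$ that carries the first factor of $\H^2\times\R\subset\L^4$, whereas the unit normal $\nu$ of $f$ takes values in the $\H^2$ inside the {\em different} Lorentz space $\L^3=\R^2_{(x_1,x_2)}\times\R_h$ where $f$ lives. There is no canonical identification between these two copies, so ``$\langle G,f_z\rangle_{\L^3}=0$'' has no meaning until you specify one, and the verification you postpone is exactly the substance of the theorem. Even granting an identification and $\nu=G$, your final inference is loose: harmonicity of $\nu$ gives CMC, but the Weierstrass data $\{Q_0,\tau_0\}$ do not by themselves ``pin $H=1/2$'' (there is a scaling freedom), and the constant relating the Hopf differential of $f$ to that of its Gauss map must be computed, not asserted to be $-1$.

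Two more direct routes avoid these problems. First, you can compute the second fundamental form of $f$ in flat $\L^3$ outright: write the unit normal $\nu$ of $f=(F,h)$ in terms of the left-invariant components $(N_1,N_2,u)$ of the $\Nil$-normal of $X$ (using $N_1x_{1,z}+N_2x_{2,z}+uA_1=0$ together with $h_z=-iA_1$), and then evaluate $\langle f_{z\bar z},\nu\rangle$ and $\langle f_{zz},\nu\rangle$; equations {\bf (C.1)}--{\bf (C.4)} for $X$ reduce these to $H=1/2$ and $p_f=-Q$. Second, and closer in spirit to your idea, relate $\nu$ not to $G$ but to Daniel's Gauss map $g$ of $X$ from Theorem~\ref{th:gauss0}: both $g$ and $\nu$ are built from data intrinsic to $X$ and $F$, there {\em is} a natural identification (the stereographic model of $\H^2$ in your target $\L^3$), and the paper already records that $g$ is harmonic with Weierstrass data $\{-Q,\lambda u^2\}$. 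Matching $\nu$ with $g$ via Daniel's explicit representation formula for $X$ and the standard Weierstrass formula for spacelike CMC $1/2$ surfaces in $\L^3$ then yields $H=1/2$ and the Hopf differential $-Q\,dz^2$ without any cross-space identification.
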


\subsubsection{CMC $\sqrt{-\kappa}/2$ surfaces in $\widetilde\psl$}

In a forthcoming paper \cite{progress}, the authors and B. Daniel
will prove that there exists also a harmonic Gauss map for critical
CMC surfaces in the remaining case, i.e. the universal covering of
the group $\psl$, and will derive a representation formula for them.

%%%%%%%%%%%%%%%%%%%%%%%%%%%%%%%%%%%%%%%%%%%%%%%%%%%%%%%%%%%%%

\subsection{Half-space theorems}

One of the most important results in the global study of minimal
surfaces in $\R^3$ is the classical half-space theorem by Hoffman
and Meeks \cite{HoMe}. This theorem says that any properly immersed
minimal surface in $\R^3$ lying in a half-space must be a plane
parallel to the one determining the half-space. The main tools used
here are the maximum principle and the existence of catenoids, a
$1$-parameter family of minimal surfaces converging to a
doubly-covered punctured plane $P$, and intersecting the planes
parallel to $P$ in compact curves.

The analogous version for CMC one half surfaces in $\hr$ was proved
in \cite{HRS}. In this setting, horocylinders play the role of the
planes in $\R^3$.

\begin{teo}[Hauswirth-Rosenberg-Spruck]\label{th:halfspace12}
The only properly immersed CMC one half surfaces in $\hr$ that are
contained in the mean convex side of a horocylinder $C$ are the
horocylinders parallel to $C$.

Also, the only properly embedded CMC one half surfaces in $\hr$
containing a horocylinder in its mean convex side are the
horocylinders.
\end{teo}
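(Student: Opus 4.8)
The plan is to adapt the Hoffman-Meeks argument to $\hr$, using horocylinders in place of planes and a suitable $1$-parameter family of rotationally invariant CMC $1/2$ surfaces playing the role of catenoids. Recall that the mean convex side of a horocylinder $C$ is the region toward which the mean curvature vector points. The first statement (about properly immersed surfaces on the mean convex side) is the core analytic result; the second (the embedded case with $C$ contained in the surface's mean convex side) will follow from the first by a comparison and separation argument, so I would concentrate on the first.

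For the immersed case, suppose $\Sigma$ is a properly immersed CMC $1/2$ surface contained in the mean convex side of a horocylinder $C$. I would first produce the analog of the catenoid: a rotationally invariant CMC $1/2$ annular surface, obtained by the ODE analysis that governs rotational surfaces in $\hr$. The key geometric features I need are that these \emph{CMC $1/2$ catenoid-cousins} form a family $\{\mathcal{C}_t\}$ whose members are contained in the mean convex side of a horocylinder, that they converge (as the neck parameter degenerates) to a doubly covered horocylinder, and that each $\mathcal{C}_t$ intersects the horocylinders parallel to $C$ in compact curves. Critically, the orientation must be arranged so that the mean curvature vectors of $\mathcal{C}_t$ and of $\Sigma$ are compatible for the maximum principle: both surfaces should be CMC $1/2$ with mean curvature vector pointing into the same region at a contact point, which is exactly what the mean-convex hypothesis guarantees.

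With this family in hand I would run the standard maximum principle sweep. Place one of the barrier surfaces $\mathcal{C}_t$ far from $\Sigma$ on the appropriate side, then translate it (by the ambient isometries of $\hr$ that preserve the foliation by horocylinders, i.e. the parabolic translations together with vertical translations) toward $\Sigma$ until a first point of contact occurs. Properness of $\Sigma$ together with the compactness of the intersections of $\mathcal{C}_t$ with the parallel horocylinders ensures that a first interior (or asymptotically controlled) contact point is reached rather than escaping to infinity. At such a contact point the two surfaces are tangent, lie locally on the same side, and have the same constant mean curvature $1/2$ with matching mean curvature vectors; the interior maximum principle (or the boundary version of Hopf's lemma, if contact is along a boundary) for the CMC $1/2$ quasilinear equation then forces $\Sigma$ to coincide with $\mathcal{C}_t$ near that point, and hence globally by analyticity. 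But $\mathcal{C}_t$ is not contained in the mean convex side of $C$ (it is genuinely annular and dips to both sides), a contradiction unless $\Sigma$ is itself a horocylinder parallel to $C$.

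The main obstacle I anticipate is establishing the correct asymptotic behavior and mean-convexity of the rotational barrier family $\{\mathcal{C}_t\}$: I must verify from the ODE that these surfaces really do degenerate to a doubly covered horocylinder, that they have compact horizontal sections, and above all that their mean curvature vector points in the direction compatible with the hypothesis on $\Sigma$, so that the maximum principle comparison is valid rather than vacuous. A secondary difficulty is the properness-at-infinity bookkeeping: ruling out the possibility that the first contact only occurs in a limit as the barrier is pushed to infinity requires using properness of $\Sigma$ and the specific geometry of how the barriers foliate the mean convex region. Once the barrier family and its orientation are correctly set up, the deduction of the embedded statement from the immersed one is comparatively routine, using the embeddedness to separate $\Sigma$ from the horocylinder it contains in its mean convex side and applying the first part.
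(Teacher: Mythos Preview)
Your overall strategy---a Hoffman--Meeks sweep with a degenerating family of CMC $1/2$ barriers and the maximum principle---matches the paper's. The gap is in the barrier family itself. You propose to use \emph{rotationally invariant} CMC $1/2$ annuli and assert that they degenerate to a doubly covered horocylinder. They do not: rotational surfaces in $\hr$ are invariant under the elliptic isometries fixing a vertical geodesic, whereas horocylinders are invariant under parabolic isometries fixing a point at infinity of $\H^2$. A limit of surfaces with a common rotation axis is again rotationally symmetric about that axis, so it cannot be a horocylinder. For the same reason, rotational CMC $1/2$ annuli do not sit on the mean convex side of a horocylinder with compact intersections in the way your argument requires; their level geometry is organized around geodesic circles, not horocycles. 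So the ODE analysis you invoke will not produce barriers with the asymptotics you need, and the sweep never gets off the ground.

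The paper (following \cite{HRS}) avoids this by abandoning rotational symmetry altogether: the barriers are \emph{compact} CMC $1/2$ annuli with boundary, constructed so as to lie between two parallel horocylinders. One then uses this family of compact annuli---with boundary on horocylinders---in place of catenoids, applying the interior and boundary maximum principles as the family degenerates toward a horocylinder. The point is that the barrier construction must be adapted to the parabolic foliation by horocylinders, not to the elliptic rotations. If you want to salvage your outline, you should replace the rotational family by such compact annuli between horocylinders (or, alternatively, by surfaces invariant under parabolic translations), and then verify the orientation and first-contact bookkeeping against that family.
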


\begin{proof}
The main point here is to construct a family of CMC one half
surfaces in $\hr$ to be used in the same way as catenoids in the
proof of the half-space theorem in $\R^3$. This is achieved by means
of compact annuli with boundaries, contained between two
horocylinders.
\end{proof}

For the case of $\Nil$, we must distinguish between horizontal and
vertical half-spaces. The equivalent to the half-space theorem for
surfaces lying in a horizontal half-space is proved by using the
family of rotational annuli \cite{AbRo2}. The corresponding vertical
version has been obtained in \cite{DaHa}, by constructing first a
family of \emph{horizontal catenoids}, i.e. properly embedded
minimal annuli (non-rotational) with a geometric behaviour good
enough to apply the Hoffman-Meeks technique.

\begin{teo}[Daniel-Hauswirth]\label{th:halfspace0}
The only properly immersed minimal surfaces in $\Nil$ that are
contained in a vertical half space are the vertical planes parallel
to the one determining the half-space.
\end{teo}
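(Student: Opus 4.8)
The plan is to adapt the Hoffman--Meeks half-space technique \cite{HoMe}, exactly in the spirit of the proof sketch of Theorem \ref{th:halfspace12}. In $\Nil$ the role played by the horizontal planes of $\R^3$ is taken by the \emph{vertical planes}, i.e. the preimages $\pi^{-1}(\ell)$ of the geodesics $\ell$ of the base $\R^2$; these are minimal surfaces and, as $\ell$ varies over the parallels of a fixed line, they foliate $\Nil$. Two ingredients are needed. The first is the geometric maximum principle for minimal surfaces in $\Nil$: in suitable local graphical coordinates the minimal surface equation \eqref{min} is quasilinear elliptic, so the difference of two minimal surfaces that are tangent at a point and locally on one side of each other satisfies a linear elliptic inequality, and the classical interior and boundary Hopf comparison applies. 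The second, and the heart of the matter, is a family of barrier surfaces playing the role of the catenoids: the \emph{horizontal catenoids}.

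Constructing these horizontal catenoids is the main obstacle, and it is the real content of \cite{DaHa}. I would produce a one-parameter family $\{C_s\}$ of properly embedded minimal annuli, each asymptotic near infinity to a doubly-covered vertical plane and carrying a single compact neck, such that as $s\to 0$ the necksize shrinks to zero and, on compact sets near the plane, $C_s$ converges to that vertical plane covered twice with one puncture. Since $\Nil$ has no rotations about a horizontal axis, these annuli cannot be obtained by ODE analysis of a rotational profile as the Euclidean catenoids are; instead I would build them through the harmonic Gauss map and sister-surface correspondence of Theorems \ref{th:sister} and \ref{th:gauss0}, prescribing suitable Weierstrass data on an exterior domain, or by a conjugate Plateau construction, and then control the conformal type and the asymptotics of the resulting surface. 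The delicate analytic points are proving that each $C_s$ is properly embedded, that each of its two ends is a graph converging to the vertical plane, and that the necks genuinely sweep down to the degenerate doubly-covered plane as $s\to 0$.

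Granting such barriers, the theorem follows by the Hoffman--Meeks scheme. Let $\Sigma$ be properly immersed, minimal, and contained in the half-space bounded by a vertical plane $P_0$, and suppose $\Sigma$ is not a vertical plane. Let $P$ be the extreme vertical plane, parallel to $P_0$, on which $\Sigma$ touches or accumulates, with $\Sigma$ entirely on one side of $P$. If $\Sigma$ meets $P$, then at a contact point $\Sigma$ is tangent to the minimal surface $P$ from one side, so the interior maximum principle forces $\Sigma$ to coincide with $P$ on an open set, hence everywhere by connectedness and real-analyticity, contradicting that $\Sigma$ is non-planar. If instead $\Sigma$ only accumulates on $P$, I place a horizontal catenoid $C_s$ with small neck so that its two nearly-planar sheets approximate $P$, and I bring it towards $\Sigma$ by a horizontal translation of $\Nil$, which is an ambient isometry. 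Because $\Sigma$ is proper and the ends of $C_s$ are graphical over, and asymptotic to, the vertical plane, a first point of contact occurs at an interior point, where $\Sigma$ and the translate of $C_s$ are tangent and lie on one side of each other.

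At that first contact the maximum principle again forces $\Sigma$ to coincide with the horizontal catenoid on an open set, and therefore globally by connectedness and analyticity. This is absurd, since a horizontal catenoid is a neck-carrying annulus that crosses the vertical plane rather than a single vertical plane. In every case we reach a contradiction, so $\Sigma$ must be a vertical plane parallel to $P_0$, which is the assertion of the theorem. As indicated above, the only genuinely new difficulty beyond the Euclidean argument is the construction and asymptotic control of the horizontal catenoids; once these are in hand, the comparison argument is standard.
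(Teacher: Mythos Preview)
Your proposal is correct and follows essentially the same route as the paper: build horizontal catenoids in $\Nil$ via the harmonic Gauss map representation of Theorem \ref{th:gauss0}, then run the Hoffman--Meeks barrier argument with the maximum principle. The paper is slightly more specific, noting that the relevant harmonic maps come from the family used in \cite{Ha} for Riemann-type surfaces, and that the key geometric property of each $C_s$ is that it meets every vertical plane $\{x_2=c\}$ in a compact convex curve (this compactness, rather than asymptotic planarity of a single $C_s$, is what guarantees an interior first contact).
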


\begin{proof} Using the representation formula for minimal surfaces in $\Nil$ (see Theorem \ref{th:gauss0}),
it is possible to construct  {\em horizontal catenoids} in $\Nil$.
These surfaces are a $1$-parameter family of properly embedded
minimal annuli, intersecting vertical planes $\{x_2=c\}$ in a
non-empty closed convex curve. Moreover, the family converges to a
double covering of $\{x_2=0\}$ minus a point. They are obtained by
integrating a family of harmonic maps that belong to a more general
family used in the construction of Riemann type minimal surfaces in
$\hr$ \cite{Ha}. Once we have these \emph{catenoids}, we finish by
using the maximum principle similarly to the Euclidean case.
\end{proof}

\subsection{The classification of entire graphs}

In this section we will describe the space of entire graphs of
critical CMC in $\Ek$. Such a description follows from the works of
Fernández-Mira \cite{FeMi1,FeMi3}, Hauswirth-Rosenberg-Spruck
\cite{HRS} and Daniel-Hauswirth \cite{DaHa}, and is contained in
Theorems \ref{teonuevo1} and \ref{th:entire0}. We expose here a
unified perspective to this subject. First, we have

\begin{teo}[\cite{DaHa,FeMi3,HRS}]\label{teonuevo1}
The following conditions are equivalent for a surface of critical
CMC in $\Ek$:
 \begin{enumerate}
   \item[$(1)$]
   It is an entire graph.
\item[$(2)$]
It is a complete multigraph.
 \item[$(3)$]
$u^2 \, ds^2$ is a complete Riemannian metric (where $u$ is the
angle function and $ds^2$ the metric of the surface).
 \end{enumerate}
In particular, the sister correspondence preserves entire graphs of
critical CMC.
\end{teo}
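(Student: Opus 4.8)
The plan is to treat condition $(3)$ as the pivotal one and to close the cycle $(1)\Rightarrow(2)\Rightarrow(3)\Rightarrow(1)$. First I would record the elementary fact that being a multigraph is equivalent to $u\neq 0$ everywhere: an immersed surface is a local graph for the fibration $\pi:\Ek\to\Mk$ exactly when $\pi|_{\Sigma}$ is a local diffeomorphism, i.e. when no tangent plane is vertical, i.e. when $u=\esc{N}{\xi}\neq 0$. In particular $(3)$, which asks that $u^2\,ds^2$ be a genuine (positive definite) metric, already forces the multigraph condition. The two comparison implications are then immediate from the criterion that a Riemannian metric is complete if and only if every divergent path has infinite length. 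For $(1)\Rightarrow(2)$: since $\pi$ is a Riemannian submersion it is $1$-Lipschitz, so on a graph $ds^2\ge (\pi|_{\Sigma})^{*}g_{\Mk}$; as $\Mk$ is complete (here $\kappa\le 0$) and $\pi|_{\Sigma}$ is a diffeomorphism onto $\Mk$, every divergent path in $\Sigma$ projects to an infinite-length path and hence has infinite $ds^2$-length. For $(3)\Rightarrow(2)$: from $|u|\le 1$ we get $u^2\,ds^2\le ds^2$, so completeness of the smaller metric forces completeness of $ds^2$.

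For $(3)\Rightarrow(1)$ I would pass to the Minkowski model. Working first with a minimal surface $X=(F,t):\Sigma\to\Nil$ --- recall that every simply connected critical CMC surface is, up to the sister correspondence, of this form --- Theorem \ref{co:L3final} produces a spacelike $H=1/2$ surface $f=(F,h):\Sigma\to\L^3$ whose induced metric is exactly $\lambda u^2|dz|^2=u^2\,ds^2$. Thus $(3)$ says precisely that $f$ is complete. By the classical theorem of Cheng--Yau, a complete spacelike surface of constant mean curvature in $\L^3$ is an entire graph over the spacelike plane; hence $F:\Sigma\to\R^2$ is a diffeomorphism. Since $X$ shares the same horizontal projection $F$, the surface $X$ is an entire graph over $\R^2=\Mk$, which is $(1)$.

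The remaining implication $(2)\Rightarrow(3)$ is the heart of the matter and the step I expect to be \textbf{hardest}. Here one must show that completeness of $ds^2$ propagates to the rescaled metric $u^2\,ds^2$, i.e. that the angle function cannot decay so fast along a divergent path that $f$ becomes incomplete. For a generic nowhere-vertical complete surface this is false, so the critical-CMC structure must enter decisively. The tool is the harmonic Gauss map $G:\Sigma\to\H^2$ of Theorems \ref{th:gauss0} and \ref{th:gauss12}, together with the holomorphicity of its Hopf (Abresch--Rosenberg) differential $Q\,dz^2$: by Definition \ref{def:wei} the energy density of $G$ equals $\mu=\frac{\lambda u^2}{4}+\frac{4|Q|^2}{\lambda u^2}$, so the holomorphic datum $Q$ couples the decay of $u$ to the geometry of $G$. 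The plan is to exploit this coupling, and the Jacobi equation satisfied by $u$ (as $\xi$ is Killing), to obtain the gradient and Harnack control on $u$ that rules out the fast decay, thereby transferring completeness from $ds^2$ to $u^2\,ds^2$. This analytic estimate is the genuine obstacle; everything else is formal.

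Chaining $(1)\Rightarrow(2)\Rightarrow(3)\Rightarrow(1)$, with $(3)\Rightarrow(2)$ as a consistency check, gives the equivalence for minimal surfaces in $\Nil$. The same scheme runs in $\H^2\times\R$ through the hyperbolic Gauss map of Theorem \ref{th:gauss12}, and in the universal cover of $\psl$ through its corresponding harmonic Gauss map, so $(1)\Leftrightarrow(2)\Leftrightarrow(3)$ holds in every critical-CMC space. Finally, since $(2)$ and $(3)$ are defined solely through $ds^2$ and $u$ --- both preserved by the sister correspondence --- a critical CMC surface satisfies $(3)$ if and only if its sister does; combined with the equivalence $(1)\Leftrightarrow(3)$ in each space, this yields the closing assertion that the sister correspondence carries entire graphs of critical CMC to entire graphs.
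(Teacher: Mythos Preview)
Your cycle $(1)\Rightarrow(2)\Rightarrow(3)\Rightarrow(1)$ and the paper's cycle differ in two places, and one of them is a genuine gap.

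\textbf{The hard step.} You locate the obstacle at $(2)\Rightarrow(3)$ and propose to close it by Harnack/gradient control on the Jacobi function $u$. The paper does \emph{not} proceed this way: it proves $(2)\Rightarrow(1)$ directly, by a geometric contradiction argument that rests on the half-space theorem in $\nil$ (Theorem~\ref{th:halfspace0}). Assuming a complete multigraph is not entire, stability (coming precisely from $u>0$) gives bounded geometry; one then shows the surface becomes asymptotic to a vertical plane near a boundary point of the base projection, and analytically continues it to a graph lying on one side of that plane --- contradicting the half-space theorem. The half-space theorem in turn requires constructing horizontal catenoids in $\nil$ via the representation formula. This geometric input is the key idea you are missing. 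Your Harnack plan faces the difficulty that the coefficient $\|\mathcal{B}\|^2$ in the Jacobi operator is not a priori bounded; the curvature control that would make a Harnack inequality usable comes from stability, but once you invoke stability and bounded geometry you are essentially back to the paper's route, and you still need the half-space theorem to finish.

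\textbf{The direction of Cheng--Yau and the transfer to other $\Ek$.} You use Cheng--Yau as ``complete spacelike CMC in $\L^3$ $\Rightarrow$ entire graph'' to get $(3)\Rightarrow(1)$; the paper uses it in the opposite direction, ``entire spacelike CMC graph $\Rightarrow$ complete,'' to get $(1)\Rightarrow(3)$ in $\nil$. For $(3)\Rightarrow(1)$ the paper instead proves the pointwise estimate $u^2\,ds^2\le g_F$ (with $F=\pi\circ\psi$ the base projection), valid for \emph{any} surface in \emph{any} $\Ek$, from which completeness of $u^2\,ds^2$ forces $F$ to be a global diffeomorphism. This matters for your last paragraph: the sister correspondence preserves $(2)$ and $(3)$, so to pull ``entire'' across you need an independent $(3)\Rightarrow(1)$ in the target $\Ek$. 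Your $\L^3$ argument only runs in $\nil$ (Theorem~\ref{co:L3final} is a $\nil$ statement, and the $F$ it produces is the $\nil$-projection), so invoking ``the same scheme'' in $\H^2\times\R$ or $\widetilde{\psl}$ is circular --- you would be assuming the very transfer of entireness you are trying to prove. The paper's estimate $u^2\,ds^2\le g_F$ is exactly what breaks that circularity.
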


Let us make some comments on this theorem. First, Hauswirth,
Rosenberg and Spruck proved $(2)\Rightarrow (1)$ for $H=1/2$
surfaces in $\H^2\times \R$. Second, the authors proved in
\cite{FeMi3} that $(3)\Rightarrow (1)$ (for any surface in $\Ek$,
not necessarily CMC), and that $(1)\Rightarrow (3)$  holds for
minimal surfaces in $\nil$. Finally, Daniel and Hauswirth showed
that $(2)\Rightarrow (1)$ holds for minimal surfaces in $\nil$. The
rest of the cases can be easily obtained from these results and the
sister correspondence (this was first observed in \cite{DHM}).

\begin{proof}
It is immediate that $(1)\Rightarrow (2)$. Also, by an eigenvalue
estimate, the authors proved in \cite{FeMi3} that for arbitrary
surfaces in $\Ek$ it holds $u^2 \, ds^2 \leq g_F$, where $F=\pi
\circ \psi$ is the projection onto $\Mk$ of $\psi$. Thus, if $u^2\,
ds^2$ is complete, $F$ is a local diffeomorphism with complete
pullback metric, and by standard topological arguments, $F$ is a
diffeomorphism, i.e. $(3)\Rightarrow (1)$ holds.

That $(1)\Rightarrow (3)$ holds for minimal surfaces in $\nil$ was
also proved in \cite{FeMi3}: let $X=(F,t):\Sigma\flecha \nil$ be an
entire minimal graph. By Theorem \ref{co:L3final}, there is an
entire spacelike CMC graph $f=(F,h):\Sigma\flecha \L^3$, whose
induced metric is $ds_f^2 = u^2 \, ds^2$. Now we can apply a theorem
by Cheng and Yau \cite{ChYa} which says that spacelike entire CMC
graphs in $\L^3$ have complete induced metric. Hence $u^2 \, ds^2$
is complete, as wished.

We will now prove that $(2)\Rightarrow (1)$ holds for minimal
surfaces in $\nil$. Let us observe that once this is done, we can
also prove the theorem for surfaces of critical CMC in all the
spaces $\Ek$. Indeed, as any simply connected surface of critical
CMC is the sister surface of some minimal surface in $\nil$, and as
the correspondence preserves the metric and the angle function
(therefore it preserves conditions $(2)$ and $(3)$ by passing to the
universal covering), we can easily translate the theorem for the
case of minimal surfaces in $\nil$ to the rest of the spaces. It is
important here that we proved $(3)\Rightarrow (1)$  in all spaces.

So, we only need to prove $(2)\Rightarrow (1)$ for minimal surfaces
in $\nil$. This was done by Daniel and Hauswirth \cite{DaHa}. For
that, they used their half-space theorem in $\nil$ (Theorem
\ref{th:halfspace0}) and an adaptation to $\nil$ of the previous
proof of $(2)\Rightarrow (1)$ for the case of $H=1/2$ surfaces in
$\H^2\times \R$ given by Hauswirth-Rosenberg-Spruck \cite{HRS}.

In order to prove $(2)\Rightarrow (1)$ for minimal surfaces in
$\nil$, we argue by contradiction. Assume that there exists a
complete multigraph $\Sigma$ that is not entire. Then there exists
an open set $\Sigma_0\subset\Sigma$ that is a graph over a disc
$D\subsetneq\R^2$ of a function $f$, and a point $q\in\partial D$
such that $f$ does not extend to $q$.

{\em Step 1: For any sequence of points $\{q_n\}$ in $D$ converging
to $q$, the sequence of normal vectors at the points
$p_n=(q_n,f(q_n))\in\Sigma_0$ converges to the horizontal vector
orthogonal to $\partial D$ at $q$.}

Indeed, as the surface is a multigraph, its angle function
$u=\esc{N}{E_3}$, where $N$ denotes the unit normal vector, is a
Jacobi function that does not vanish. As a result of this, $\Sigma$
is (strongly) stable, and has bounded geometry. This means that
locally around any $p_n$ we can write the surface as the graph (in
exponential coordinates) over a disc of radius $\delta$ of its
tangent plane, where $\delta$ is a universal constant depending only
on $\Sigma$. This neighborhood of $p_n$ will be denoted by
$\mathcal{G}(p_n)$. The limit of the normal vectors $\{N(p_n)\}$
must be a horizontal vector since otherwise, the piece
$\mathcal{G}(p_n)$ of bounded geometry could be extended as a graph
beyond $q$, which is impossible. Moreover, the limit vector must be
normal to $\partial D$ at $q$ since $\Sigma_0$ is a graph over $D$.

{\em Step 2: The function $f$ defining the graph $\Sigma_0$ diverges
at $q$. Moreover, as we approach $q$, and after translating the
surface to the origin, the surfaces converge to a piece of the
(translated) vertical plane $P$ passing through $q$ and tangent to
$\partial D$.}

That $f$ diverges at $q$ is a consequence of the completeness of
$\Sigma$, and the last part can be proved by following the ideas of
Collin and Rosenberg in \cite{CoRo}. We will assume that $P$ is the
plane $\{x_1=c\}$.

{\em Step 3: $\Sigma$ contains a graph $\mathcal{G}$ over a domain
of the form $U_\epsilon=(c-\epsilon,c)\times\R\subset\R^2$.
Moreover, this graph is disjoint from $P$ and asymptotic to it as
one approaches $q$.}

The graph $\mathcal{G}$ is obtained by analytical continuation of
the surfaces $\mathcal{G}(p_n)$ used in the first step, and after a
careful study of the behavior of the intersection curves of these
graphs and the planes parallel to $P$.

Finally, the contradiction follows from the half-space theorem
(Theorem \ref{th:halfspace0}). Recall that, although  $\mathcal{G}$
has   boundary and the theorem is formulated for surfaces without
boundary, its proof applies to this case, and so we are done.
\end{proof}

Once here, we investigate the \emph{Bernstein problem} for entire
graphs of critical CMC in $\Ek$, i.e. the classification of such
entire graphs (recall here that CMC graphs in $\Ek$ satisfy the
elliptic PDE \eqref{eq:graphs}). The terminology comes from the
classical Bernstein theorem: \emph{entire minimal graphs in $\R^3$
are planes}. Equivalently, any solution to the minimal graph
equation
 \begin{equation}\label{minclas}
   (1+f_y^2)f_{xx} -2f_x f_y f_{xy} + (1+f_x^2)f_{yy} = 0
 \end{equation}
defined on the whole plane is linear.

It is interesting to compare this result with the Bernstein problem
in $\nil$, i.e. the classification of entire minimal graphs in
$\nil$. This corresponds to classifying all global solutions to the
PDE \eqref{min}. Observe that taking $\tau=0$ in \eqref{min} we
obtain the classical equation \eqref{minclas}, i.e. the classical
case considered by Bernstein appears as a limit of the Heisenberg
case.

There exists, however, a great difference between both situations.
The following result classifies the entire graphs of critical CMC in
$\Ek$, by parametrizing the moduli space of such entire graphs in
terms of holomorphic quadratic differentials. It was obtained first
for minimal graphs in $\nil$ by the authors \cite{FeMi3}, and
shortly thereafter by Daniel and Hauswirth \cite{DaHa} for $H=1/2$
graphs in $\H^2\times \R$. The general case follows easily from the
Heisenberg case and Theorem \ref{teonuevo1}, using the sister
correspondence (this was observed first in \cite{DHM}).

\begin{teo}[Fern\'andez-Mira, Daniel-Hauswirth]\label{th:entire0}
Let $Q dz^2$ denote a holomorphic quadratic differential on
$\Sigma\equiv \C$ or $\D$, such that $Q\not\equiv 0$ if $\Sigma
\equiv \C$, and let $H^2 = -\kappa /4$.

There exists a $2$-parameter family of entire CMC $H$ graphs in
$\Ek$ whose Abresch-Rosenberg differential agrees with $Q dz^2$.
These graphs are generically non-congruent.

And conversely, these are all the entire graphs of critical CMC in
$\Ek$.
\end{teo}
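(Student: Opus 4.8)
The plan is to collapse the whole statement onto the single case of minimal graphs in $\nil$, and then to convert that case into a classification of complete spacelike CMC surfaces in Minkowski $3$-space $\L^3$ by their holomorphic Hopf differential. First I would invoke the sister correspondence together with Theorem \ref{teonuevo1}. Every simply connected surface of critical CMC in $\Ek$ is the sister of a minimal surface in $\nil$; the correspondence preserves the metric $\landa|dz|^2$ and the angle function $u$, hence (by Theorem \ref{teonuevo1}) the property of being an entire graph, and it also preserves the conformal type $\Sigma\equiv\C$ or $\D$. Since the correspondence multiplies the Abresch--Rosenberg differential by a fixed unimodular constant $e^{-2i\theta}$, prescribing $Q\,dz^2$ on the $\Ek$ side is the same as prescribing $e^{2i\theta}Q\,dz^2$ on the $\nil$ side, which ranges over the same space of holomorphic quadratic differentials. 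Thus it suffices to prove the theorem for entire minimal graphs $X:\Sigma\flecha\nil$ with prescribed Abresch--Rosenberg differential, and the general case then follows verbatim.

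Next I would pass to harmonic maps. By Theorem \ref{th:gauss0}, an entire (hence nowhere vertical) minimal graph in $\nil$ is encoded by its Gauss map $g:\Sigma\flecha\H^2$, a harmonic map with Weierstrass data $\{-Q,\landa u^2\}$; conversely, any harmonic map with these data and \emph{complete} Weierstrass metric $\landa u^2|dz|^2$ reconstructs the graph through the representation formula. The analytic bridge is Theorem \ref{co:L3final}: the horizontal projection of $X$ together with the height of its sister assemble into a spacelike CMC $\tfrac12$ surface $f:\Sigma\flecha\L^3$ whose induced metric is exactly $\landa u^2|dz|^2$ and whose Hopf differential is $-Q\,dz^2$. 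By the Cheng--Yau theorem \cite{ChYa}, already used in the proof of Theorem \ref{teonuevo1}, completeness of $\landa u^2|dz|^2$ is equivalent to $f$ being an entire spacelike graph. So the problem becomes: classify entire (equivalently, complete) spacelike CMC $\tfrac12$ surfaces in $\L^3$ by their holomorphic Hopf differential $-Q\,dz^2$.

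At this point I would solve the $\L^3$ classification and count parameters. Writing the induced metric as $\rho|dz|^2$, the Gauss equation for a spacelike CMC surface with holomorphic Hopf differential reduces to a single elliptic (sinh--Gordon type) equation for $\log\rho$ with $|Q|^2$ as coefficient. Existence amounts to producing, for every holomorphic $Q$ on $\C$ (with $Q\not\equiv0$) or on $\D$, a complete solution $\rho$; uniqueness asserts that this complete solution is unique. Granting both, the fundamental theorem for harmonic maps into $\H^2$ shows that a harmonic map is determined by its Hopf differential and its Weierstrass metric up to post-composition with an isometry of $\H^2$; hence the harmonic maps with Hopf differential $-Q$ and complete Weierstrass metric form a single $\mathrm{Isom}^+(\H^2)\cong\mathrm{PSL}(2,\R)$ orbit, a generically $3$-dimensional family. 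Translating back to $\nil$, two such graphs are $\nil$-congruent exactly when their Gauss maps differ by an ambient congruence that moves the Gauss map; left translations fix the Lie group Gauss map, so the only relevant congruences form the $1$-parameter group of rotations about the vertical axis. Dividing the $3$-dimensional orbit by this $1$-parameter subgroup leaves exactly $3-1=2$ effective parameters, yielding the asserted generically non-congruent $2$-parameter family. The converse follows from the same uniqueness: any entire graph of critical CMC produces, through the two correspondences, a complete spacelike surface with some Hopf differential $-Q$, and uniqueness of $\rho$ places it in the family attached to $Q$.

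The hard part will be the $\L^3$ step, namely the existence and uniqueness of a \emph{complete} conformal factor $\rho$ solving the Gauss equation for an \emph{arbitrary} prescribed holomorphic Hopf differential, on $\C$ (nonzero) or on $\D$. The delicate points are the admissible growth of $Q$ and the two conformal types: on $\C$ one must rule out the degenerate case $Q\equiv0$, which forces a conformal and hence constant Gauss map (a vertical plane, not a graph), whereas on $\D$ the case $Q\equiv0$ is genuine and one must instead control completeness at the ideal boundary. These analytic facts are exactly what is established by Fern\'andez--Mira \cite{FeMi3} for $\nil$ and by Daniel--Hauswirth \cite{DaHa} for $\hr$; once they are in hand, the passage among $\nil$, $\hr$ and $\L^3$ and the bookkeeping of the two parameters is routine.
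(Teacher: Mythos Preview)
Your approach is essentially the same as the paper's: reduce to minimal graphs in $\nil$ via the sister correspondence and Theorem~\ref{teonuevo1}, then use Theorem~\ref{co:L3final} to translate the problem into the classification of entire spacelike CMC $\tfrac12$ graphs in $\L^3$ by their Hopf differential, and finally count parameters.

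One correction on attribution: the ``hard part'' you isolate --- existence and uniqueness of a complete conformal factor $\rho$ (equivalently, of an entire spacelike CMC $\tfrac12$ graph in $\L^3$) for every prescribed holomorphic Hopf differential on $\C$ or $\D$ --- is not proved in \cite{FeMi3} or \cite{DaHa}. It is the theorem of Wan \cite{Wan} (for $\D$) and Wan--Au \cite{WaAu} (for $\C$), and the paper invokes it explicitly. The papers \cite{FeMi3} and \cite{DaHa} \emph{apply} this result through the bridges you describe; they do not re-establish it.

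On the parameter count: your $3-1=2$ argument (the ${\rm PSL}(2,\R)$-orbit of Gauss maps modulo vertical rotations, left translations acting trivially on $g$) is correct and equivalent to the paper's more compressed $6-4=2$ (the drop from the $6$-dimensional isometry group of $\L^3$ to the $4$-dimensional isometry group of $\nil$). Both are bookkeeping for the same fact: the Wan/Wan--Au surface is unique in $\L^3$ up to a $6$-dimensional group, while congruence in $\nil$ only accounts for $4$ of those dimensions.
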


At this point, the proof for the case of minimal surfaces in $\nil$
is a consequence of Theorem \ref{co:L3final} and the following
result by Wan and Wan-Au \cite{Wan,WaAu} on spacelike entire CMC
graphs in $\L^3$: \emph{for any holomorphic quadratic differential
as above, there exists a unique (up to isometries) spacelike entire
CMC $1/2$ graph in $\L^3$ with Hopf differential $Qdz^2$.} The
$2$-parameter family of non-congruent graphs in $\Ek$ comes from the
loss of ambient isometries (from $6$ dimensions to $4$ dimensions)
when passing from $\L^3$ to $\Nil$.

The remaining cases of critical CMC graphs follow since by Theorem
\ref{teonuevo1} the sister correspondence preserves entire graphs.

\subsection{Open Problems}
As explained in Section 3, entire graphs are stable. It is
conjectured that entire graphs and vertical cylinders are the only
stable critical CMC surfaces (this has been proved for parabolic
conformal type in \cite{MaPR}). Related to this is the question of
non-existence of complete stable $H>1/2$ surfaces in $\H^2\times \R$
(proved for $H>1/\sqrt{3}$ by Nelli-Rosenberg, \cite{NeRo2}).

Also, not much is known about properly embedded surfaces of critical
CMC and non-trivial topology. Can one obtain them by conjugate
Plateau constructions, or by integrable systems techniques? Another
remarkable problem is to establish the strong half-space theorem in
$\nil$: are two disjoint properly embedded minimal surfaces in
$\nil$ necessarily two parallel vertical planes, or two parallel
entire minimal graphs?

%%%%%%%%%%%%%%%%%%%%%%%%%%%%

\section{Minimal surfaces in $\H^2\times \R$ and $\S^2\times \R$}

Minimal surfaces in product spaces admit a special treatment, due to
several reasons. One of them is the following: if
$\psi=(N,h):\Sigma\flecha M^2\times \R$ is a minimal surface
immersed in the product space $M^2\times \R$, where $(M^2,g)$ is a
Riemannian surface, then the horizontal projection $N:\Sigma\flecha
M^2$ is a harmonic map and the height function $h:\Sigma\flecha \R$
is a harmonic function. This implies, for instance, that compact
minimal surfaces in $M^2\times \R$ only exist if $M^2$ is compact
(in particular, if $M^2=\S^2$), and the only ones are the slices
$M^2\times \{t_0\}$.

Another important fact about minimal surfaces in $M^2\times \R$ is that there is a natural notion of \emph{minimal graph} over a domain $\Omega\subset M^2$, and that this graph satisfies a simple elliptic PDE in divergence form. This fact together with general existence results for solutions to the Plateau problem in Riemannian $3$-manifolds allows a good control on the geometry of the surface. Some of the most interesting results of the theory of minimal surfaces in product spaces come from the interplay between the information provided by harmonic maps and by Plateau constructions and the minimal graph equation.

Starting with the pioneer work of H. Rosenberg \cite{Ros}, and W.H
Meeks and H. Rosenberg \cite{MeRo1,MeRo2}, the theory of minimal
surfaces in $M^2\times \R$ has developed substantially in the last
decade. We will only talk here about a few results of special
relevance to the theory, and not mention many other important
results.

\subsection{The Collin-Rosenberg theorem}

The classical Bernstein theorem in $\R^3 $ states that planes are the only entire minimal graphs in $\R^3$. This theorem can be extended to the case of product spaces: \emph{any entire minimal graph in $M^2\times \R$, where $(M^2,g)$ is a complete surface of non-negative curvature, is totally geodesic}.

In contrast, in the product space $\H^2\times \R$ there is a wide variety of entire minimal graphs. For instance, in \cite{NeRo1} Nelli and Rosenberg solved the Dirichlet problem at infinity for the minimal graph equation in $\H^2\times \R$. They proved that any Jordan curve at the ideal boundary $\S^1\times \R \equiv \parc_{\8} \H^2\times \R$ of $\H^2\times \R$ which is a graph over $\S^1\equiv \parc_{\8} \H^2$ is the asymptotic boundary of a unique entire minimal graph in $\H^2\times \R$ (see \cite{GaRo} for a proof of this in the more general case of entire minimal graphs in $M^2\times \R$, where $(M^2,g)$ is complete, simply connected and with $K_M\leq c<0$).

All these entire minimal graphs are hyperbolic, that is, they have
the conformal type of the unit disk. The problem of existence of
entire minimal graphs of parabolic type (i.e. with the conformal
type of $\C$) is much harder, and was solved recently by Collin and
Rosenberg \cite{CoRo}.

\begin{teo}[Collin-Rosenberg]
There exist entire minimal graphs in $\H^2\times \R$ of parabolic
conformal type.
\end{teo}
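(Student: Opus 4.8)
The plan is to exploit the special feature of minimal surfaces in product spaces emphasized above: for a minimal immersion $\psi=(N,h):\Sigma\flecha\hr$ the height function $h$ is harmonic and the horizontal projection $N:\Sigma\flecha\H^2$ is a harmonic map. If $\Sigma$ is an \emph{entire graph} over $\H^2$, then $N$ is automatically a diffeomorphism onto $\H^2$, and ``parabolic conformal type'' means precisely that $\Sigma$ is conformally $\C$. Hence producing an entire minimal graph of parabolic type is equivalent to producing a harmonic diffeomorphism from $\C$ onto $\H^2$, so the whole problem reduces to exhibiting a single entire minimal graph whose induced conformal structure is that of $\C$ rather than that of $\D$.

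First I would construct minimal graphs of \emph{Scherk type} over ideal geodesic polygons in $\H^2$. Given an ideal polygon $P\subset\H^2$, one solves a Jenkins--Serrin Dirichlet problem for the minimal graph equation \eqref{eq:graphs} (with $\kappa=-1$, $\tau=0$, $H=0$), prescribing boundary value $+\8$ on some of the geodesic sides and $-\8$ on the remaining ones. Under the appropriate balancing conditions relating the (suitably truncated) lengths of the $+\8$ and $-\8$ edges, this problem admits a solution, giving a minimal graph whose harmonic height function $h$ is proper and tends to $\pm\8$ along the corresponding edges. The maximum principle together with the general existence theory for the Plateau problem in Riemannian $3$-manifolds yields uniqueness and the compactness needed for the next step.

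Next I would choose a sequence of ideal polygons $P_n$ exhausting $\H^2$, with compatible $\pm\8$ data, and pass to the limit of the associated Scherk graphs. Using interior gradient and curvature estimates for minimal graphs in $\hr$, the sequence subconverges uniformly on compact sets to an \emph{entire} minimal graph $\Sigma$ over all of $\H^2$. At this stage $\Sigma$ is simply connected, its height function $h$ is a globally defined proper harmonic function, and its (globally defined) harmonic conjugate $h^*$ gives a natural holomorphic parametrization $h+ih^{*}$ that is the candidate for exhibiting $\Sigma$ as a conformal copy of $\C$.

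The hard part is to pin down the conformal type and force it to be \emph{parabolic}. Each finite Scherk graph over $P_n$ is conformally a disc, so parabolicity is not inherited automatically and must be built into the construction: one must show that the conformal moduli of the annular regions of $\Sigma$ lying between consecutive domains $P_n$ and $P_{n+1}$ diverge, so that $\sum_n \operatorname{mod}(A_n)=\8$. By a classical extremal-length criterion this forces the ideal boundary of $\Sigma$ to be conformally negligible, i.e. $\Sigma\equiv\C$, ruling out the hyperbolic alternative $\D$. Obtaining the required lower bounds on these moduli is exactly where the delicate choice of the polygons $P_n$ and of the $\pm\8$ data enters, and where the properness of $h$ and the flux of its conjugate $h^{*}$ are used to control the growth of the induced metric. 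This modulus estimate is the crux of the argument and the step carrying the bulk of the technical work.
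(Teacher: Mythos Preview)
Your outline matches the paper's approach: build Jenkins--Serrin (Scherk-type) minimal graphs over ideal geodesic polygons in $\H^2$ with alternating $\pm\infty$ boundary values, enlarge the polygons so as to exhaust $\H^2$, pass to a limit entire graph, and secure parabolicity by bounding below the conformal moduli of the annular regions added at each stage (the paper phrases this as ``control on the conformal radius on adequate compact annuli''). The one point the paper stresses more explicitly is that each enlargement must be arranged so that the new Scherk graph is $C^2$-close to the previous one on a prescribed compact set; this is what makes the sequence actually converge and what lets one keep track of the annular moduli stage by stage.

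Two small corrections. First, the Scherk graphs over ideal polygons are themselves conformally $\C$, not conformally $\D$ as you assert; your caution that parabolicity is not automatically inherited by the limit is nonetheless well placed and is exactly why the modulus estimate is needed. Second, $h+ih^{*}$ cannot serve as a global conformal parameter on $\Sigma$, since the harmonic height $h$ has critical points on the Scherk pieces; it is a useful auxiliary holomorphic function, but the parabolicity is genuinely obtained through the extremal-length/modulus argument you describe in your final paragraph, not by exhibiting an explicit uniformizing map.
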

As the projection onto $\H^2$ of a minimal graph is a harmonic
diffeomorphism, the above theorem has the following consequence,
which solves a major problem in the theory of harmonic maps and
disproves a conjecture by R. Schoen and S.T. Yau.
\begin{cor}[Collin-Rosenberg]
There exist harmonic diffeomorphisms from $\C$ onto $\H^2$.
\end{cor}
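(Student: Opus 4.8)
The plan is to obtain the corollary as an essentially immediate consequence of the theorem, by unwinding the dictionary between minimal graphs in $\hr$ and harmonic maps into $\H^2$. The genuine content is carried by the theorem, i.e. the construction of an entire minimal graph of \emph{parabolic} conformal type, which I take as given; the corollary is then a matter of transferring the conformal structure and the harmonicity correctly.

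First I would fix an entire minimal graph in $\hr$ of parabolic conformal type, and write $\psi = (N,h)\colon \Sigma \flecha \hr$ for the corresponding conformal minimal immersion, where $\Sigma$ is the abstract Riemann surface carrying the induced conformal structure. The horizontal component $N \colon \Sigma \flecha \H^2$ is the composition of $\psi$ with the bundle projection $\hr \flecha \H^2$. Because $\Sigma$ is an \emph{entire} graph, this projection restricts to a diffeomorphism of the image onto all of $\H^2$, with inverse the section $x \mapsto (x,f(x))$ given by the defining function $f$ of the graph; since $\psi$ itself is a diffeomorphism onto the graph, $N$ is a diffeomorphism from $\Sigma$ onto $\H^2$.

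Next I would invoke the general fact recalled at the start of this section: for a minimal surface $\psi=(N,h)$ in a product $M^2\times\R$, the horizontal projection $N$ is a harmonic map into $M^2$ with respect to the induced metric on $\Sigma$. Taking $M^2=\H^2$, the diffeomorphism $N\colon \Sigma \flecha \H^2$ is harmonic. Now, harmonicity of a map out of a surface depends only on the conformal class of the domain metric, and $\Sigma$ has parabolic conformal type, so I may precompose $N$ with a conformal identification $\C \cong \Sigma$. The resulting map $\C \flecha \H^2$ is again harmonic, and it is a diffeomorphism as a composition of diffeomorphisms, which is exactly the harmonic diffeomorphism from $\C$ onto $\H^2$ sought.

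Granted the theorem, there is no genuine analytic obstacle in the corollary itself: every step is a direct reading of definitions, the only points requiring any care being that \emph{entire graph} forces bijectivity of the projection and that \emph{parabolic conformal type} is precisely the hypothesis that lets one replace the abstract domain $\Sigma$ by $\C$. The whole difficulty is thus displaced onto the existence statement of the theorem; indeed the very existence of such a map had been conjectured impossible by Schoen and Yau, so the subtlety lies entirely in that hard construction rather than in this deduction.
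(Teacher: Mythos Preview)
Your argument is correct and is exactly the deduction the paper makes: it states that ``the projection onto $\H^2$ of a minimal graph is a harmonic diffeomorphism'' and derives the corollary immediately from the existence theorem, invoking the same general fact (recalled at the start of the section) that the horizontal projection of a minimal surface in $M^2\times\R$ is harmonic. You have simply unpacked the steps (bijectivity from the entire-graph condition, conformal invariance of harmonicity, identification of the parabolic domain with $\C$) more explicitly than the paper does.
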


The proof by Collin and Rosenberg is a good example of the
interaction between the harmonicity properties of the minimal
immersion and the use of Plateau constructions and the minimal graph
equation.

The main idea in the proof is to construct first (non-entire) minimal graphs in $\H^2\times \R$ of Scherk type over ideal geodesic polygons, having alternating asymptotic values $+\8$ and $-\8$ on the sides of the polygon. This generalizes a classical construction by Jenkins and Serrin in the case of minimal graphs over bounded domains in $\R^3$. This construction is done as follows:

Let $\Gamma$ be an ideal polygon of $\H^2$, so that all the vertices
of $\Gamma$ are at the ideal boundary of $\H^2$ and $\Gamma$ has an
even number of sides $A_1,B_1,A_2, B_2...,A_k,B_k$, ordered
clockwise. At each vertex $a_i$, we consider a small enough
horocycle $H_i$ with $H_i \cap H_j = \emptyset$. Each $A_i$ (resp.
$B_i$) meets exactly two horocycles. Denote by $\tilde A_i$ (resp.
$\tilde B_i$), the compact arc of $A_i$(resp $B_i$) which is the
part of $A_i$ outside the two horodisks. We denote by $|A_i|$ the
length of $|\tilde A_i|$. Define $\tilde B_i$ and  $|B_i|$ in the
same way.

Now we can consider $a(\Gamma)= \sum _{i=1}^k |A_i|$ and $b(\Gamma)
= \sum _{i=1}^k |B_i|$. We observe that $a(\Gamma)-b(\Gamma)$ does
not depend on the choice of the horocycle $H_i$ at $a_i$, since
horocycles with the same point at infinity are equidistant. Keeping
in mind these data, we can state the following theorem by
Collin-Rosenberg \cite{CoRo} (see also Nelli-Rosenberg
\cite{NeRo1}):

\begin{teo}(\cite{NeRo1}, \cite{CoRo}) \label{js} There is a (unique up to additive constants) solution to the minimal surface equation in the polygonal domain
$P$, equal to $+\infty$ on $A_i$ and $-\infty$ on $B_i$, if and only
if the following conditions are satisfied:
\begin{enumerate}
\item $a(\Gamma)=b(\Gamma)$,
\item For each inscribed polygon ${\cal P}$ in $\Gamma$, ${\cal P} \neq \Gamma$, and for some choice of horocycles at the vertices, one has
$$2 a({\cal P}) < |{\cal P}| \hbox{ and } 2 b({\cal P}) < |{\cal P}|.$$
\end{enumerate}
\end{teo}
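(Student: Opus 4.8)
The plan is to follow the classical Jenkins--Serrin strategy, transplanting its divergence (flux) calculus and barrier constructions to the geometry of $\hr$. Write the minimal graph equation for a function $u$ over $P$ in divergence form as $\div X_u=0$, where $X_u=\nabla u/\sqrt{1+|\nabla u|^2}$ is computed with the hyperbolic metric of the base; then $|X_u|<1$ pointwise, and along any side of $\Gamma$ where $u\flecha +\8$ the field $X_u$ tends to the outer unit conormal $\nu$. For the \emph{necessity} of the two conditions I would argue by flux balance. Given any subdomain $\Omega\subset P$ bounded by sides of $\Gamma$ and by geodesic chords, integrating $\div X_u=0$ and applying the divergence theorem shows that the total flux $\int_{\parc\Omega}\esc{X_u}{\nu}$ vanishes. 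Along an $A_i$-side this flux equals the truncated length $|A_i|$, along a $B_i$-side it equals $-|B_i|$, and across any interior chord $\gamma$ it is strictly smaller than the length of $\gamma$ (equality would force $|\nabla u|=\8$ in the interior). Taking $\Omega=P$ gives $a(\Gamma)=b(\Gamma)$, which is condition $(1)$; taking $\Omega$ bounded by an inscribed polygon $\cP\neq\Gamma$ and balancing fluxes gives $|a(\cP)-b(\cP)|<|\cP|-a(\cP)-b(\cP)$, which rearranges to the two strict inequalities $2a(\cP)<|\cP|$ and $2b(\cP)<|\cP|$ of condition $(2)$. The horocycle truncation is precisely what renders all these lengths finite, and the fact that $a-b$ is independent of the horocycle choice makes the statement well posed.

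For the \emph{sufficiency} (existence) I would use an exhaustion-and-limit scheme. For each $n$ solve the bounded Dirichlet problem on $P$ with data $+n$ on the sides $A_i$ and $-n$ on the sides $B_i$, interpolated continuously near the ideal vertices (equivalently, solve first on compact polygons obtained by cutting the vertices along horocycles, then let the truncation recede); solvability of these bounded minimal graphs follows from the general Plateau/Dirichlet theory in $\hr$ together with suitable barriers. Since minimal graphs are stable, the interior curvature estimates for stable minimal surfaces yield uniform local gradient bounds for the $u_n$ away from $\parc P$, so a subsequence converges on compact subsets of the interior to a minimal solution $u$.

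The heart of the matter, and the step I expect to be the main obstacle, is to show that the limit $u$ is finite throughout the interior and realizes the prescribed infinite boundary values. I would analyze the \emph{divergence sets} $U^{+}=\{u=+\8\}$ and $U^{-}=\{u=-\8\}$ obtained in the limit, using the monotone convergence of the fluxes of $X_{u_n}$. A structure argument shows that each such set, if it had nonempty interior, would be bounded by geodesic chords forming an inscribed polygon that \emph{saturates} one of the inequalities $2a(\cP)<|\cP|$ or $2b(\cP)<|\cP|$, contradicting condition $(2)$; hence $U^{+}$ and $U^{-}$ are empty in the interior and $u$ is a genuine finite solution. Attaining the boundary values $+\8$ on $A_i$ and $-\8$ on $B_i$ is then secured by barriers: near the finite parts of the sides by standard barriers, and near the ideal vertices by barriers built from horocycles and from explicit minimal surfaces of $\hr$ (Scherk-type pieces and the surfaces of Nelli--Rosenberg). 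Controlling these ideal-vertex barriers, where the ambient hyperbolic geometry departs sharply from the Euclidean Jenkins--Serrin picture, is the delicate point.

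Finally, \emph{uniqueness} up to an additive constant would follow from a maximum principle for the minimal graph equation with infinite boundary data. If $u$ and $v$ are two solutions with the same $\pm\8$ data, their difference $u-v$ extends continuously with zero values on the finite parts of $\parc P$ and controlled behaviour at the vertices; a Collin--Krust type growth estimate for the difference of two solutions of the minimal surface equation then forces $u-v$ to be constant, with condition $(1)$ reentering through the flux balance to exclude a nonconstant bounded difference.
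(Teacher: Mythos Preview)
The paper does not give a proof of this theorem: it is a survey, and Theorem~\ref{js} is quoted from \cite{NeRo1,CoRo} without argument, then used as input for the Collin--Rosenberg construction of parabolic entire minimal graphs. So there is no ``paper's own proof'' to compare against.

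That said, your outline is exactly the Jenkins--Serrin strategy carried out in the cited sources: flux/divergence formula for necessity, the sequence $u_n$ with boundary values $\pm n$ together with a divergence-set analysis for existence, and a maximum-principle argument for uniqueness. Your identification of the divergence-set structure lemma (inscribed polygons saturating $2a(\cP)=|\cP|$ or $2b(\cP)=|\cP|$) as the crux is correct. One point deserves more care than you give it: in the necessity direction, the domain $P$ is noncompact and the sides have infinite hyperbolic length, so the divergence theorem must be applied on the horocycle-truncated domain, and you then need to show that the flux of $X_u$ across the small horocycle arcs tends to $0$ as the horocycles shrink to the ideal vertices. This is where the hyperbolic geometry enters nontrivially (horocycle length decays exponentially), and it is precisely this control that makes the truncated quantities $a(\Gamma),b(\Gamma),|\cP|$ the right invariants. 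Your sentence ``the horocycle truncation is precisely what renders all these lengths finite'' gestures at this but does not address the horocycle-arc flux, which is the actual new step relative to the Euclidean Jenkins--Serrin theorem.
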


All these examples have the conformal type of $\C$. Once there,
Collin and Rosenberg designed a way of enlarging a given Scherk-type
graph over the interior of some $\Gamma\subset \H^2$ into another
one with more sides, and so that: (1) the extended surface is
$C^2$-close to the original one over an arbitrary compact set in the
interior of $\Gamma$, and (2) there is a control on the conformal
radius on adequate compact annuli on the surface.

By passing to the limit in this sequence of minimal graphs over
larger and larger domains, they obtained an entire minimal graph in
$\H^2\times \R$ which, by the control on the conformal radii of
these annuli, has the conformal type of $\C$.

\begin{rem}
The Collin-Rosenberg theorem has been extended by J.A. Gálvez and H. Rosenberg \cite{GaRo} to more general product spaces $M^2\times \R$: \emph{there exist entire minimal graphs of parabolic conformal type on $M^2\times \R$, where $(M^2,g)$ is any complete simply connected Riemannian surface with Gaussian curvature $K_M\leq c<0$ ($K_M$ not constant).}
\end{rem}

\subsection{Minimal surfaces of finite total curvature in
$\H^2\times \R$}

One of the most studied families among minimal surfaces in $\R^3$
are the complete minimal surfaces of finite total curvature (FTC for
short). A minimal surface $\Sigma$ is said to have FTC if its
Gaussian curvature $K$ satisfies
$$\left |\int_\Sigma K \,dA\right | <\infty.$$

By classical theorems of Huber and Osserman, complete FTC minimal
surfaces in $\R^3$ are conformally equivalent to a compact Riemann
surface minus a finite number of points. Moreover, the Gauss map
extends meromorphically to the punctures, and the total curvature of
the surface is a multiple of $-4\pi$. A key point here is that the
Gauss map of a minimal surface in $\R^3$ is conformal.

In $\hr$ there is no conformal Gauss map for minimal surfaces.
Nonetheless, using the global theory of harmonic maps into $\H^2$,
L. Hauswirth and H. Rosenberg \cite{HaRo} were able to prove that a
similar situation holds in $\H^2\times \R$.

\begin{teo}[Hauswirth-Rosenberg]
Let $X$ be a complete minimal immersion of $\Sigma$ in $\H^2 \times
\R$ with finite total curvature. Then
\begin{enumerate}
\item  $\Sigma$ is conformally equivalent to a Riemann surface punctured at a finite
number of  points, $\Sigma \equiv \overline{M}_g
-\{p_{1}....,p_{k}\}$.

\item  $Qdz^2:= h_z^2 \, dz^2$ is  holomorphic on $M$ and extends meromorphically to each
puncture. If we parameterize each puncture $p_{i}$ by the exterior
of a disk of radius $r$, and if $Q(z)dz^2=z^{2m_{i}}(dz)^2$ at
$p_{i}$, then $m_{i} \geqslant -1$.

\item The third coordinate $u$ of the unit normal tends to zero uniformly at each puncture.

\item The total curvature is a multiple of $2\pi$:
$$\int_\Sigma K d A=2\pi \left(2-2g-2k-\sum_{i=1}^k m_{i}\right).$$

\end{enumerate}

As a consequence, every end of a finite total curvature surface is
uniformly asymptotic to a Scherk type graph described in Theorem
\ref{js}.
\end{teo}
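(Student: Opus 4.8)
The plan is to establish the four conclusions as consequences of the global theory of harmonic maps into $\H^2$, exploiting the fact that for a minimal surface in $\hr$ the height function $h$ is harmonic and the horizontal projection $N$ is a harmonic map into $\H^2$. The central object is the holomorphic Hopf differential $Qdz^2 = h_z^2\,dz^2$ (equivalently, the Hopf differential of the harmonic map $N$, which agrees with it up to a constant by the structure theory of Section 6). The finite total curvature hypothesis controls the behavior of this differential and of the conformal structure near the ends, and the entire argument proceeds by first fixing the conformal type, then analyzing each puncture asymptotically.

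\emph{First}, I would prove conclusion $(1)$. Since $X$ is a complete minimal immersion, its induced metric $ds^2$ is complete, and I would seek to apply the classical Huber theorem: a complete surface whose negative-curvature part has finite total integral is conformally a compact Riemann surface $\overline M_g$ minus finitely many points. Here the finite total curvature hypothesis $\left|\int_\Sigma K\,dA\right|<\infty$ gives exactly the integrability needed, so $\Sigma\equiv\overline M_g-\{p_1,\dots,p_k\}$, with each end conformally a punctured disk, i.e. the exterior of a disk of radius $r$.

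\emph{Next}, for conclusion $(2)$, harmonicity of $h$ makes $Qdz^2=h_z^2\,dz^2$ a holomorphic quadratic differential on $M$ (this is the general fact recalled in Section 6: the Hopf differential of a harmonic map from a Riemann surface is holomorphic). The work is to show $Q$ extends meromorphically across each puncture with the order bound $m_i\geq -1$. I would combine the finite total curvature assumption with the Weierstrass-data relation $2\esc{G_z}{G_{\bar z}} = \tau_0/4 + 4|Q_0|^2/\tau_0$ and curvature formulas to control the growth of $Q$ near $p_i$; finiteness of curvature forces at most polynomial blow-up, giving meromorphy, and the bound $m_i\geq -1$ comes from ruling out poles of order $\geq 2$ via completeness of the metric at the end. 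Conclusion $(3)$, that the angle function $u$ tends to zero uniformly at each puncture, I would deduce from the asymptotic flattening of the harmonic map $N$ at each end: finite energy together with the pole-order analysis forces the image of $N$ to limit onto $\partial_\infty\H^2$, which is precisely the statement that the normal becomes asymptotically horizontal, i.e. $u\to 0$. Finally, for the total curvature formula, I would integrate the Gauss equation and apply the Gauss–Bonnet theorem on $\overline M_g$ with the punctures removed, reading off the boundary contributions from the local models $Q(z)dz^2 = z^{2m_i}(dz)^2$ at each $p_i$; the orders $m_i$ enter exactly through the index contribution of the zeros/poles of the differential, yielding $\int_\Sigma K\,dA = 2\pi(2-2g-2k-\sum m_i)$.

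\emph{The main obstacle} I expect is the asymptotic analysis at the punctures in step $(2)$: establishing meromorphic extension of $Q$ with the sharp order bound $m_i\geq -1$ requires delicate estimates on the harmonic map $N$ near an isolated end, rather than a purely formal computation, and it is here that one genuinely needs the global theory of harmonic maps into $\H^2$ (no conformal Gauss map being available, unlike the $\R^3$ case). Once the local normal form of $Q$ at each puncture is pinned down, the remaining conclusions—including the identification of each end as uniformly asymptotic to a Scherk-type graph from Theorem \ref{js}—follow by matching the local model $z^{2m_i}(dz)^2$ against the known asymptotics of the Scherk graphs and invoking the Dirichlet-type uniqueness already recorded there.
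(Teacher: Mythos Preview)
Your overall architecture (Huber for the conformal type, holomorphicity of $Q=h_z^2\,dz^2$, then asymptotic analysis at the punctures, then Gauss--Bonnet) matches the paper, but the heart of the argument---the analysis at each end that yields (3) and feeds into (4)---is missing the right tool, and the substitutes you propose do not work.

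For (3) you argue that ``finite energy together with the pole-order analysis forces the image of $N$ to limit onto $\partial_\infty\H^2$, which is precisely the statement that the normal becomes asymptotically horizontal.'' Two problems: first, finite total curvature does \emph{not} give you finite energy of the harmonic map $N:\Sigma\to\H^2$, so that hypothesis is unavailable; second, ``the horizontal projection $N$ tends to $\partial_\infty\H^2$'' and ``the angle function $u\to 0$'' are different assertions---the former is about where the surface goes in $\H^2$, the latter about the direction of the unit normal. What the paper actually does is pass to the natural flat coordinate $w=\int\sqrt{Q}\,dz$ on an annular neighborhood of the end (possible once one knows $Q$ has no zeros there); in this coordinate the conformal factor of the induced metric satisfies a \emph{sinh-Gordon} equation, and growth estimates for solutions of that PDE, combined with $\int|K|\,dA<\infty$, force the curvature to decay, the metric to become flat, and hence the tangent plane to become vertical, i.e.\ $u\to 0$. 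This sinh-Gordon step is the genuine analytic content replacing the conformal Gauss map available in $\R^3$.

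The same sinh-Gordon estimates are what make (4) go through: to apply Gauss--Bonnet on the punctured surface you must control the boundary terms (geodesic curvature of small circles around each $p_i$), and it is the asymptotic flatness of the metric in the $w$-coordinate that lets you read these off from the local model $Q\sim z^{2m_i}dz^2$. Your plan of ``reading off the boundary contributions from the local models'' presupposes exactly this control, which you have not established. Also, for the bound $m_i\ge -1$ in (2), the paper's reason is simply that $Q=h_z^2$ is the \emph{square} of a meromorphic $1$-form, not a completeness argument on the metric.
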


\begin{proof}
The first step is to prove that locally around an end, $Qdz^2$ only
has at most a finite number of zeroes. Then a Huber theorem and an
argument of Osserman give that the ends are conformally a punctured
disk, and $Qdz^2$ extends meromorphically to the puncture. The final
part of the behavior of $Qdz^2$ follows from the fact that $Qdz^2 =
h_z^2 dz^2$, where $h$ is the height function of the surface.

To prove that $u$ goes to $0$ at the ends, take an annular
neighborhood of an end where $Qdz^2$ does not vanish. Then
reparameterize this annulus by $w=\int\sqrt{Q}dz$. The metric
conformal factor in these coordinates satisfies a sinh-Gordon
equation, and the Gaussian curvature monotonically decreases to
zero. Then, estimates on the growth of solutions of the sinh-Gordon
equation allows one to conclude that, at a finite total curvature
end, the tangent plane becomes vertical and the metric becomes flat.

Finally, the expression for the total curvature follows from
Gauss-Bonnet formula and the estimates for the sinh-Gordon equation
obtained before.
\end{proof}

In \cite{HaRo}, the following question was also raised: \emph{are
there complete non simply connected minimal surfaces with FTC in
$\hr$?} Notice that rotational catenoids have infinite total
curvature. Actually, at that time, the only known complete FTC
minimal surfaces were the Scherk type graphs.

This question was positively answered by J. Pyo \cite{Pyo} and also,
independently, by Rodr\'{i}guez and Morabito \cite{RoMo}. Pyo
constructed a $1$-parameter family of genus zero properly embedded
minimal surfaces in $\hr$ with $k$ ends for $k \geq 2$, similar to
the $k$-noids in $\R^3$ (although the first ones are embedded and
the $k$-noids in $\R^3$ are not). They have total curvature
$4\pi(1-k)$, and are asymptotic to vertical planes at infinity.
These surfaces are obtained as the conjugate surfaces of minimal
graphs over infinite geodesic triangles in $\H^2$ that are
asymptotic to vertical planes at infinity.

Very shortly thereafter, M. Rodr\'{i}guez and F. Morabito discovered
independently a larger family of FTC minimal surfaces, containing
the previous ones. It is a $(2k-2)$-parameter of properly embedded
FTC minimal surfaces of genus zero with $k$ ends, obtained as the
limits of simply periodic minimal surfaces called {\em saddle
towers}, that  are invariant by a vertical translation of vector
$(0,0,2l)$. Taking limits when $l\to\infty$, they obtain genus zero
minimal surfaces with $k$ ends and total curvature $4\pi(1-k)$ that
are symmetric with respect to the reflection over the slice
$\H^2\times\{0\}$. The surfaces found by Pyo appear when the ends
are placed in symmetric positions.

%%%%%%%%%%%%%

\subsection{Open problems}

In \cite{Ha}, L. Hauswirth constructed a family of Riemann type
minimal surfaces in $\hr$ and $\S^2\times\R$, characterized by the
property of being foliated by curves of constant curvature. It is a
conjecture by W. Meeks and H. Rosenberg that in $\S^2\times\R$ they
are the only properly embedded minimal annuli. An approach for
solving this conjecture using integrable systems techniques has been
recently developed by L. Hauswirth and M. Schmidt. Another natural
problem is to obtain classification results for properly embedded
minimal surfaces of finite total curvature and a given simple
topology in $\R^3$.

Schoen and Yau proved there is no harmonic diffeomorphism from the
disk to a complete surface of non-negative curvature. Can there be
such a harmonic diffeomorphism onto a complete parabolic surface?
This is a question by J.A. Gálvez.

%%%%%%%%%%%%
\def\refname{References}

\hspace{0.4cm}

\noindent The authors were partially supported by MEC-FEDER, Grant
No. MTM2007-65249, Junta de Andalucía Grant No. FQM325 and the
Programme in Support of Excellence Groups of Murcia, by Fundación
Séneca, R.A.S.T 2007-2010, reference 04540/GERM/06 and Junta de
Andalucía, reference P06-FQM-01642."


\begin{thebibliography}{9}

\bibitem[AbRo1]{AbRo1} U. Abresch, H. Rosenberg, A Hopf differential for constant mean curvature surfaces in $\S^2\times \R$ and $\H^2\times \R$, {\it Acta Math.} {\bf 193} (2004), 141--174.

\bibitem[AbRo2]{AbRo2} U. Abresch, H. Rosenberg, Generalized Hopf differentials, {\it Mat. Contemp.} {\bf 28} (2005), 1--28.

\bibitem[AEG1]{AEG1} J.A. Aledo, J.M.Espinar, J.A. Gálvez, Height estimates for surfaces with positive mean curvature in $\mathbb{M}\times \mathbb{R}$. {\em Illinois Journal of Math.,} {\bf 52}  (2008), 203-211.

\bibitem[Bon]{bon} F. Bonahon, Geometric structures on 3-manifolds. In Handbook of Geometric
Topology, pages 93--164. North-Holland, Amsterdam, 2002.

\bibitem[ChYa]{ChYa} S.Y. Cheng, S.T. Yau, Maximal spacelike hypersurfaces in the
Lorentz-Minkowski spaces, {\it Ann. of Math.} {\bf 104} (1976),
407--419.

\bibitem[CoRo]{CoRo} P. Collin, H. Rosenberg, Construction of harmonic diffeomorphisms and minimal graphs, {\it Ann. of Math.}, to appear (2007).

\bibitem[Dan1]{Dan1} B. Daniel, Isometric immersions into $3$-dimensional homogeneous manifolds, {\it Comment. Math. Helv.} {\bf 82} (2007), 87-131.

\bibitem[Dan2]{Dan2} B. Daniel, The Gauss map of minimal surfaces in the Heisenberg group, preprint, 2006,  arXiv:math/0606299.

\bibitem[DFM]{progress} B. Daniel, I. Fern\'andez, P. Mira, {Surfaces of critical
constant mean curvature}. Work in progress.

\bibitem[DaHa]{DaHa}  B. Daniel, L. Hauswirth, Half-space theorem, embedded minimal annuli and minimal graphs in the Heisenberg group. {\em Proc. Lond. Math. Soc. (3)}, {\bf 98} no.2 (2009), 445--470.

\bibitem[DHM]{DHM} B. Daniel, L. Hauswirth, P. Mira, Constant mean curvature
surfaces in homogeneous manifolds, preprint, 2009. Published
preliminarly by the Korea Institute for Advanced Study.

\bibitem[DaMi]{DaMi} B. Daniel, P. Mira, {Existence and uniqueness of constant mean curvature spheres in $\sol$.} Preprint, 2008, arXiv:0812.3059

\bibitem[dCF]{dCF} M.P. do Carmo, I. Fernández, Rotationally
invariant CMC disks in product space, {\it Forum Math.} {\bf 21}
(2009), 951--963.

\bibitem[EGR]{EGR} J.M. Espinar, J.A. Gálvez, H. Rosenberg, Complete surfaces with positive extrinsic curvature in product
spaces,  {\it Comment. Math. Helv.}, {\bf 84} (2009), 351--386.

\bibitem[EsRo]{EsRo2} J.M. Espinar, H. Rosenberg, Complete constant mean curvature surfaces
in homogeneous spaces, {\it Comment. Math. Helv.}, to appear (2009).

\bibitem[FeMi1]{FeMi1} I. Fern\'andez, P. Mira, Harmonic maps and constant mean curvature surfaces in $\H^2\times \R$,
{\it Amer. J. Math.} {\bf 129} (2007), 1145--1181.

\bibitem[FeMi2]{FeMi2} I. Fern\'andez, P. Mira, A characterization of constant mean curvature surfaces
in homogeneous $3$-manifolds, {\it Diff. Geom. Appl.}, {\bf 25}
(2007), 281--289.

\bibitem[FeMi3]{FeMi3} I. Fern\'andez, P. Mira, Holomorphic quadratic differentials and the Bernstein problem in Heisenberg space. {\it Trans. Amer. Math. Soc.}, {\bf 361}, no 11, (2009), 5737--5752.

\bibitem[FoWo]{wood} A.P. Fordy, J.C. Wood. Harmonic maps and integrable systems. Aspects of Mathematics, vol. E23, by Vieweg, Braunschweig/Wiesbaden, 1994.

\bibitem[GMM]{GMM} J.A. G\'alvez, A. Martínez, P. Mira, The Bonnet
problem for surfaces in homogeneous $3$-manifolds, {\it Comm. Anal.
Geom.} {\bf 16} (2008), 907--935.

\bibitem[GaRo]{GaRo} J.A. G\'alvez, H. Rosenberg, Minimal surfaces and harmonic diffeomorphisms from the complex plane onto a Hadamard surface. Preprint, 2008,  arXiv:0807.0997.

\bibitem[Ha]{Ha} L. Hauswirth, Minimal surfaces of Riemann type in three dimensional product manifolds. {\em Pacific J. Math.}, {\bf 224}, no.1 (2006), 91--117.

\bibitem[HaRo]{HaRo} L. Hauswrith, H. Rosenberg. Minimal surfaces of finite total curvature in $\H\times\R$. {\em Mat. Contemp.} {\bf 31} (2006), 65--80.

\bibitem[HRS]{HRS} L. Hauswirth, H. Rosenberg, J. Spruck. On complete mean curvature $\frac{1}{2}$ surfaces in $\hr$. {\em Comm. Anal. Geom.}, {\bf 16}, no.5 (2008), 989--1005.

\bibitem[HoMe]{HoMe} D. Hoffman, W. H. Meeks III. The strong halfspace theorem for minimal surfaces. {\em Invent. Math.} {\bf 101}, no.2 (1990), 373--377.

\bibitem[HsHs]{HsHs} W.Y. Hsiang, W.T. Hsiang, On the uniqueness of isoperimetric solutions and imbedded soap bubbles in noncompact symmetric spaces I, {\it Invent. Math.} {\bf 98} (1989), 39--58.

\bibitem[Lee]{lee} H. Lee. {Extension of the duality between minimal surfaces and maximal surfaces.} Preprint, 2009.

\bibitem[MaPR]{MaPR} M. Manzano, J. Pérez, M. Rodríguez. Parabolic stable surfaces with constant mean
curvature. Preprint, 2009, arXiv:0910.5373.

\bibitem[MPR]{stable} W. H. Meeks III, J. Pérez, A. Ros. {Stable constant mean curvature surfaces}, {\em Handbook of Geometric Analysis} no.1 (2008).

\bibitem[Mee]{Mee} W.H. Meeks. Constant mean curvature surfaces in
homogeneous $3$-manifolds. Preprint (2009).

\bibitem[MeRo1]{MeRo1} W.H. Meeks, H. Rosenberg, The theory of
minimal surfaces in $M\times \R$, {\it Comment. Math. Helv.} {\bf
80} (2005), 811--858.

\bibitem[MeRo2]{MeRo2} W.H. Meeks, H. Rosenberg, Stable minimal
surfaces in $M\times \R$, {\it J. Differential Geom.} {\bf 68}
(2004), 515--534.

\bibitem[MoUr]{MoUr} S. Montiel, F. Urbano, A Willmore functional
for compact surfaces in the complex projective plane, {\it J. Reine
Angew. Math.} {\bf 546} (2002), 139--154.

\bibitem[NeRo1]{NeRo1} B. Nelli, H. Rosenberg. Minimal surfaces in $\hr$. {\em Bull. Braz. Math. Soc.}, {\bf 33}, no.2 (2002), 263--292.

\bibitem[NeRo2]{NeRo2} B. Nelli, H. Rosenberg.  Global properties of constant mean curvature surfaces in $\hr$. {\em Pacific J. Math.} {\bf 226}, no-1 (2006), 137--152.

%\bibitem[NeRo3]{NeRo3} B. Nelli, H. Rosenberg.  Simply connected constant mean curvature surfaces in $\hr$.  {\em Michigan Math. J.}, {\bf 54}, no.3 (2006), 537--543.

\bibitem[Oss]{oss} R. Osserman. A survey on minimal surfaces. Dover Publications Inc., New York, second edition, 1986.

\bibitem[Ped]{Ped} R. Pedrosa. The isoperimetric problem in
spherical cylinders, {\it Ann. Global Anal. Geom.} {\bf 26} (2004),
333--354.

\bibitem[Pyo]{Pyo} J. Pyo. New examples of minimal surfaces in $\hr$. Preprint, 2009, arXiv:0911.5577.

\bibitem[RoMo]{RoMo} M. Rodr\'{i}guez, F. Morabito. Saddle towers in $\hr$. Preprint, 2009, arXiv:0910.5676.

\bibitem[Ros]{Ros} H. Rosenberg, Minimal surfaces in $M^2\times \R$,
{\it Illinois J. Math.} {\bf 46} (2002), 1177--1195.

\bibitem[SaE]{SaE} R. Sa Earp, Parabolic and hyperbolic screw
motion surfaces in $\H^2\times\R$, {\it J. Austr. Math. Soc.}, {\bf
85} (2008), 113--143.

\bibitem[SaTo]{SaTo} R. Sa Earp, E. Toubiana, Screw motion surfaces in $\H^2\times \R$ and $\S^2\times \R$,
{\it Illinois J. Math.} {\bf 49} (2005), 1323--1362.

\bibitem[Sco]{scott} P. Scott. The geometries of 3-manifolds, {\it Bull. London Math. Soc.} {\bf 15} (1983), 401--487.

\bibitem[Tor]{Tor} F. Torralbo. Rotationally invariant constant mean curvature surfaces in homogeneous $3$-manifolds. {\it Diff. Geom. Appl.}, to appear (2009), arXiv:0911.5128.

\bibitem[ToUr]{ToUr} F. Torralbo, F. Urbano. Compact stable constant
mean curvature surfaces in the Berger $3$-spheres. Preprint, 2009, arXiv:0906.1439.

\bibitem[Wan]{Wan} T.Y. Wan, Constant mean curvature surface harmonic map and universal Teichmuller space, {\it J. Differential Geom.} {\bf 35} (1992), 643--657.

\bibitem[WaAu]{WaAu} T.Y. Wan, T.K. Au, Parabolic constant mean curvature spacelike surfaces, {\it Proc. Amer. Math. Soc.} {\bf 120} (1994), 559-564.

\end{thebibliography}
\end{document}